\documentclass[psamsfonts]{amsart}

\usepackage{enumerate}
\usepackage{url}
\usepackage[font=small,labelfont=bf]{caption}
\usepackage[all,arc]{xy}
\usepackage{tabularx}
\usepackage{adjustbox}

\usepackage{amssymb,latexsym}
\usepackage{amsmath,amsthm}
\usepackage{amsfonts,mathrsfs}
\usepackage{mathtools}


\usepackage{tikz}
\usetikzlibrary{shapes.geometric, arrows, positioning,decorations.pathreplacing,calligraphy}


\tikzstyle{bloc} = [rectangle, rounded corners, 
minimum width=3cm, 
minimum height=1cm,
text centered, 
draw=black, 
fill=airforceblue!30]

\tikzstyle{decision} = [diamond,
minimum width=3cm, 
minimum height=1cm, 
text centered, 
draw=black, 
fill=airforceblue!30]
\tikzstyle{arrow} = [thick,->,>=stealth]

\tikzstyle{io} = [trapezium, 
trapezium stretches=true, 
trapezium left angle=70, 
trapezium right angle=110, 
minimum width=3cm, 
minimum height=1cm, text centered, 
draw=black, fill=blue!30]

\tikzstyle{process} = [rectangle, 
minimum width=3cm, 
minimum height=1cm, 
text centered, 
text width=3cm, 
draw=black, 
fill=orange!30]


\usepackage{tikz-cd}
\usepackage{todonotes}

\usepackage[all]{xy}
\usepackage{graphicx}

\usepackage{hyperref}
\hypersetup{
    colorlinks,
    citecolor=blue,
    filecolor=blue,
    linkcolor=blue,
    urlcolor=blue
}

\usepackage{enumitem}




\usepackage{listings}
\usepackage{color}

\definecolor{dkgreen}{rgb}{0,0.6,0}
\definecolor{gray}{rgb}{0.5,0.5,0.5}
\definecolor{mauve}{rgb}{0.58,0,0.82}

\lstset{frame=tb,
  language=Java,
  aboveskip=3mm,
  belowskip=3mm,
  showstringspaces=false,
  columns=flexible,
  basicstyle={\small\ttfamily},
  numbers=none,
  numberstyle=\tiny\color{gray},
  keywordstyle=\color{blue},
  commentstyle=\color{dkgreen},
  stringstyle=\color{mauve},
  breaklines=true,
  breakatwhitespace=true,
  tabsize=3
}

\newtheorem{thm}{Theorem}[section]
\newtheorem{cor}[thm]{Corollary}
\newtheorem{prop}[thm]{Proposition}
\newtheorem{obs}[thm]{Observation}
\newtheorem{lem}[thm]{Lemma}

\newtheorem{quest}[thm]{Question}
\newtheorem{theorem}[thm]{Theorem}

\newtheorem{corollary}[thm]{Corollary}
\newtheorem{lemma}[thm]{Lemma}

\newtheorem*{theorem*}{Theorem}

\theoremstyle{definition}
\newtheorem{defn}[thm]{Definition}

\newtheorem{fact}[thm]{Fact}

\theoremstyle{definition}
\newtheorem{definition}[thm]{Definition}

\theoremstyle{remark}
\newtheorem{rem}[thm]{Remark}

\newtheorem{remark}[thm]{Remark}

\newcommand{\brac}[2]{\left[ {#1} , {#2} \right]}

\makeatletter
\let\c@equation\c@thm
\makeatother
\numberwithin{equation}{section}



\newcommand\N{\mathbb{N}}

\newcommand\F{\mathbb{F}}

\newcommand\M{\mathbb{M}}




\newcommand\LL{\mathscr{L}}



\newcommand{\ACF}{\mathrm{ACF}}

\newcommand{\tp}{\mathrm{tp}}

\def\seq{\subseteq}

\newcommand{\set}[1]{\left\{ {#1} \right\}}
\newcommand{\vect}[1]{\langle {#1} \rangle}

\newcommand{\ol}[1]{\overline{#1}}


\definecolor{airforceblue}{rgb}{0.36, 0.54, 0.66}


\def\Ind{\setbox0=\hbox{$x$}\kern\wd0\hbox to 0pt{\hss$\mid$\hss}
\lower.9\ht0\hbox to 0pt{\hss$\smile$\hss}\kern\wd0}
\def\Notind{\setbox0=\hbox{$x$}\kern\wd0\hbox to 0pt{\mathchardef
\nn=12854\hss$\nn$\kern1.4\wd0\hss}\hbox to
0pt{\hss$\mid$\hss}\lower.9\ht0 \hbox to 0pt{\hss$\smile$\hss}\kern\wd0}
\def\ind{\mathop{\mathpalette\Ind{}}}

\def\indi#1{\mathop{\ \ \hbox to 0ex{\hss$\vert^{\hbox to 0ex{$\scriptstyle#1$\hss}}$\hss}
\lower1ex\hbox to 0ex{\hss$\smile$\hss}\ \ }}

\def\nindi#1{\mathop{\ \ \hbox to 0ex{\hss$\!\not{\vert}^{\hbox to 0ex{$\scriptstyle\,#1$\hss}}$\hss}
\lower1ex\hbox to 0ex{\hss$\smile$\hss}\ \ }}

\renewcommand{\models}{\vDash}



\bibliographystyle{alpha}

\title{A two-sorted theory of nilpotent Lie algebras}

\author[C. d'Elb\'{e}e]{Christian d\textquoteright Elb\'ee$^\dagger$}
\address{University of the Basque Country, Department of Mathematics (Leioa) / Institute for Logic, Cognition, Language and Information (Donostia-San Sebastián), Spain}
\email{christian.delbee@ehu.eus}
\urladdr{\href{http://choum.net/\textasciitilde chris/page\textunderscore perso/}{http://choum.net/\textasciitilde chris/page\textunderscore perso/}}

\author[I. M\"uller]{Isabel M\"uller$^\ddagger$}
\address{Department of Mathematics and Actuarial Science \\
The American University in Cairo \\ Egypt }
\email{isabel.muller@aucegypt.edu}
\urladdr{\href{https://www.aucegypt.edu/fac/isabel}{https://www.aucegypt.edu/fac/isabel}}

\author[N. Ramsey]{Nicholas Ramsey$^\S$}
\address{Department of Mathematics \\
University of Notre Dame\\
 USA}
\email{sramsey5@nd.edu}
\urladdr{\href{https://math.nd.edu/people/faculty/nicholas-ramsey/}{https://math.nd.edu/people/faculty/nicholas-ramsey/}}

\author[D. Siniora]{Daoud Siniora}
\address{Department of Mathematics and Actuarial Science \\The American University in Cairo\\
 Egypt}
\email{daoud.siniora@aucegypt.edu}
\urladdr{\href{https://sites.google.com/view/daoudsiniora/}{https://sites.google.com/view/daoudsiniora/}}

\date{\today}

\begin{document}

\maketitle

\begin{abstract}
We prove the existence of a model companion of the two-sorted theory of $c$-nilpotent Lie algebras over a field satisfying a given theory of fields. We describe a language in which it admits relative quantifier elimination up to the field sort. Using a new criterion which does not rely on a stationary independence relation, we prove that if the field is NSOP$_1$, then the model companion is NSOP$_4$. We also prove that if the field is algebraically closed, then the model companion is $c$-NIP.
\end{abstract}

\setcounter{tocdepth}{1}
\tableofcontents

\section{Introduction}

In the early 70s, the model theory of $c$-nilpotent Lie algebras over an infinite field $K$ was studied by Macintyre and Saracino, who showed that there is no model companion  \cite{macintyre1974existentially, macintyre1974existentially2}.  Here, we consider the model theory of $c$-nilpotent Lie algebras over infinite fields and prove that there is a model companion.  The difference consists in the language we use to encode these structures.  Macintyre and Saracino viewed Lie algebras over $K$ as structures in the usual one-sorted language of $K$-vector spaces enriched with a symbol for the Lie bracket, in which scalar multiplication is given by a unary function for each scalar in $K$.  We circumvent their non-existence results by changing the language, introducing sorts $K$ and $V$ for the field and vector space, respectively, and encoding scalar multiplication as a binary function $K \times V \to V$.  This was inspired by Granger's approach to vector spaces equipped with a bilinear form over an algebraically closed field \cite{granger1999stability}. We axiomatize the model companion of the two-sorted theory of $c$-nilpotent Lie algebras over a field and we exhibit a language in which this theory has quantifier elimination. 

The primary tool that we use for proving elimination of quantifiers is the main theorem of \cite{d2023model}, which proved that the class of $c$-nilpotent Lie algebras over a field $K$ satisfies a natural version of free amalgamation.  This result, based on earlier results of Baudisch \cite{Baudisch4} and of Maier \cite{Maierexpp}, was immediately specialized to the case of a finite field $K = \mathbb{F}_{p}$, in which case $c$-nilpotent Lie algebras with $c<p$ correspond to $c$-nilpotent groups of exponent $p$ via the Lazard correspondence. However, here we use this theorem in its full generality to analyze nilpotent Lie algebras in their own right.  In addition to axiomatizing a model companion, this amalgamation result enables a treatment of $c$-nilpotent Lie algebras over arbitrary theories of fields, allowing us to give quantifier elimination \emph{relative} to the field. As a key application, we show that ultraproducts of Fraïssé limits of finite $c$-nilpotent Lie algebras over $\mathbb{F}_{p}$, viewed naturally as two-sorted structures in this language, yield models of the theory of generic $c$-nilpotent Lie algebras over pseudo-finite fields.  The proof of quantifier elimination parallels the proofs of recent quantifier elimination results for certain linear structures in \cite{abdaldaim2023higher}, \cite{ChernikovHempel3}, and \cite{nickandlotte}. 

The two-sorted theory of nilpotent Lie algebras offers an especially valuable case study for the emerging structure theory for NSOP$_{4}$ theories. In \cite{d2023model}, we included the following table, suggesting that nilpotent Lie algebras might play a role for NSOP$_{4}$ analogous to that played by vector spaces with bilinear forms over finite and algebraically closed fields in simple and NSOP$_{1}$ theories, respectively.

\small{
\begin{table}[htbp]
  \centering
  \label{tab:my-table}
  \begin{adjustbox}{center}
  \begin{tabularx}{\textwidth}{|X|X|X|X|}
  \hline
    Stable & Simple & NSOP$_{1}$ & NSOP$_{4}$ \\
    \hline
    ACF & Psf/ACFA & $\omega$-free PAC fields & Curve-excluding fields \\
    \hline
     Vector spaces & $\mathbb{F}_{p}$-vector spaces with a bilinear map & Vector spaces over ACF with a bilinear map & Nilpotent Lie algebras \\
    \hline 
    Equivalence relations & Random graph & Parameterized equivalence relations & Henson graphs  \\
    \hline 
  \end{tabularx}
  \end{adjustbox}
\end{table}
}
\normalsize
\noindent Recently, there has emerged something like a `standard method' for showing that a theory is NSOP$_{4}$, which proceeds by first establishing that the theory has a stationary independence relation coming from a notion of free amalgamation.  First developed by Patel \cite{patel2006family}, this approach has been systematized by Conant \cite{conant2017axiomatic} and generalized further by Mutchnik \cite{mutchnik2022conant}.  It was applied in \cite{johnson2023curve} and in \cite{d2023model} to show that curve-excluding fields and generic $c$-nilpotent groups of exponent $p$ are NSOP$_{4}$, respectively.  This approach is, however, limited by the fact that there are theories with no stationary independence relations at all and there is considerable interest in developing tools for showing that such theories are NSOP$_{4}$.  Very recently, some such methods were developed by Miguel Gomez to analyze certain $3$-tournaments in \cite{miguel2024classification}.  Although there is a stationary independence relation for the model companion of $c$-nilpotent Lie algebras, we observe that there is no such relation in the generic theory of $c$-nilpotent Lie algebras over pseudo-finite fields. As a consequence, we use a new approach which replaces stationarity by a version of the independence theorem, paralleling the generalization of non-forking independence from stable to simple theories. As a consequence, we are able to show that the generic theory of $c$-nilpotent Lie algebras over an NSOP$_{1}$ field is NSOP$_4$.  We suspect that this approach could be useful in future classification results. 

As a final application, we show that the two-sorted model companion of $c$-nilpotent Lie algebras is $c$-dependent and $(c-1)$-independent, for all $c \geq 2$. This follows the parallel result of \cite{d2023model} for $c$-nilpotent groups of exponent $p$, which combined quantifier elimination with the Composition Lemma of Chernikov and Hempel \cite{ChernikovHempel3} to conclude $c$-dependence, though the analysis of terms in this theory is more complicated, due to the interaction between the field and vector space sorts. For the $c = 2$ case, we are, moreover, able to conclude that the theory is NFOP$_{2}$, a notion of ternary stability introduced by Terry and Wolf \cite{terry2022irregular}, later developed as a $k$-ary notion, NFOP$_k$, in \cite{abdaldaim2023higher}.

\section{A two-sorted language for nilpotent Lie algebras} \label{two sorted section}

\subsection{Lazard Lie algebras}

\begin{defn}
    A \textit{Lie algebra} $L$ over a field $\F$ is a vector space $L$ over $\F$ equipped with a binary operation $[\cdot,\cdot]:L\times L\to L$, called a \textit{Lie bracket}, satisfying the following properties for every $a,b,c\in L$ and $\mu \in \F$:
    \begin{itemize}\itemsep3pt
        \item  $[a,a] = 0$; \hfill (Alternativity)
        \item  $\brac{a+b}{c} = \brac{a}{c}+\brac{b}{c}$,  \hfill (Bilinearity) \\ 
        $\brac{a}{b+c} = \brac{a}{b}+\brac{a}{c}$,\\
        $[\mu a,b]=\mu[a,b]=[a, \mu b]$;
        \item $\brac{a}{\brac{b}{c}}+\brac{b}{\brac{c}{a}}+\brac{c}{\brac{a}{b}}=0$. \hfill (Jacobi identity) 
    \end{itemize}
\end{defn}

A subspace $U\subseteq L$ is called a \textit{Lie subalgebra} of $L$ if $U$ is closed under the Lie bracket. Given two subsets $A,B$ of $L$, we denote by $[A,B]$ the vector span of $\set{[a,b]\mid (a,b)\in A\times B}$. A subalgebra $I\subseteq L$ is called an \textit{ideal} of $L$ if $\brac{I}{L}\subseteq I$. 

We define inductively the \textit{lower central series} of $L$ as follows:
    \begin{itemize}
        \item $L_1 = L;$
        \item $L_{n+1} = [L_n,L] \textrm{ for } n\geq 1.$
    \end{itemize}
Note that each $L_n$ is an ideal of $L$. A Lie algebra $L$ is \textit{nilpotent of class $c$} if $c$ is the least integer such that
    \[L = L_1\supseteq L_2\supseteq \ldots \supseteq L_c \supseteq L_{c+1} = 0.\]

\begin{defn}
A sequence of subalgebras $(L_{i})_{1 \leq i \leq c+1}$ is a \emph{Lazard series} of a Lie algebra $L$ if 
$$
L = L_{1} \geq L_{2} \geq \ldots \geq L_{c+1} = 0
$$
and
$$
[L_{i},L_{j}] \subseteq L_{i+j}
$$
for all $i,j$, where we stipulate $L_k = 0$ for all $k > c$.  
\end{defn}

Note that if $L$ is a Lie algebra with a Lazard series $(L_{i})_{1 \leq i \leq c+1}$, then $L$ must be of nilpotence class at most $c$.  The lower central series is an example of a Lazard series. 

\begin{defn}
    We define a \emph{Lazard Lie algebra (LLA)} $(L,\overline{L})$ to be a Lie algebra $L$ with a distinguished Lazard series $\overline{L} = (L_{i})_{1 \leq i \leq c+1}$.  We will not always explicitly display the Lazard series $\overline{L}$ when referring to an LLA $(L,\overline{L})$, referring to it instead simply as $L$.  
\end{defn}

\begin{definition} \label{def:freeamalgambaudisch}
    Let $A,B,C$ be LLAs of nilpotency class at most $c$ over a fixed field $\mathbb{F}$ with embeddings $f_0 :C\to A, g_0:C\to B$. We say that the (at most) $c$-nilpotent LLA $S$ over $\mathbb{F}$ is a \textit{free amalgam of $A$ and $B$ over $C$} if there are embeddings $f_1:A\to S$, $g_1:B\to S$ with $f_1\circ f_0 = g_1\circ g_0$ such that the following three conditions hold, for $A' = f_1(A), B' = g_1(B), C' = (f_1\circ f_0)(C)$:
    \begin{enumerate}
        \item $S = \vect{A'B'}$;
        \item \textit{(Strongness)} $A'\cap B' = C'$;
        \item \textit{(Freeness)} for any at most $c$-nilpotent LLA $D$ and any LLA \textit{homomorphisms} $f:A\to D$ and $g:B\to D$, there exists a (unique) $h : S\to D$ such that the following diagrams commute. 
        \begin{center}\begin{tikzcd}
& A \ar[dr,"f_1"] \ar[drr, "f", bend left=20]
&
&[1.5em] \\
C \ar[ur,"f_0"] \ar[dr,"g_0"]
&
& S  \ar[r, "h",dashed]
& D \\
& B \ar[ur, "g_1"]\ar[urr, "g"', bend right=20]
&
&
\end{tikzcd}\end{center}
    \end{enumerate}
    We denote the free amalgam $S$ by $A\otimes_C B$ (the use of the definite article is justified because $S$ is unique up to isomorphism, see \cite[Remarks 4.15 and 4.16]{d2023model}).
\end{definition}

\begin{definition}
    If $A,B,C$ are LLAs over a fixed field $\mathbb{F}$ and are subalgebras of a common LLA over $\mathbb{F}$, we define
    \[A\indi \otimes _C B\iff \vect{ABC}\cong \vect{AC}\otimes_C \vect{BC},\] where $\langle X \rangle$ denotes the LLA over $\mathbb{F}$ generated by $X$.  
\end{definition}

Building off of earlier work of Baudisch \cite{Baudisch4}, the following was established in \cite{d2023model}:

\begin{fact}\label{fact:basicpropertiesoffreeindependence} \cite[Proposition 4.22, Corollary 4.40]{d2023model}
    The relation $\indi \otimes$ satisfies symmetry, invariance, monotonicity, base monotonicity, stationarity, transitivity, full existence (for all $A,B,C$ there exists $A'\equiv_C A $ such that $A'\indi \otimes _C B$). In other words, $\indi \otimes$ is a stationary independence relation in the sense of \cite{tentzieglerurysohn}.
\end{fact}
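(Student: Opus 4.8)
The plan is to deduce all of symmetry, invariance, monotonicity, base monotonicity, stationarity, transitivity and full existence from the construction and the universal property of the free amalgam $\otimes_C$ together with a normal form for its elements. \emph{Invariance} and \emph{symmetry} are formal: the defining relation $\vect{ABC} \cong \vect{AC} \otimes_C \vect{BC}$ depends only on isomorphism types over $C$, and the universal property of Definition~\ref{def:freeamalgambaudisch} is visibly symmetric in the two factors, so $\vect{AC} \otimes_C \vect{BC} \cong \vect{BC} \otimes_C \vect{AC}$.

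The technical core is a \textbf{restriction lemma}: if $S = A \otimes_C B$ and $A_0 \leq A$, $B_0 \leq B$ are sub-LLAs containing (the image of) $C$, then inside $S$ one has $\vect{A_0 B_0} = A_0 \otimes_C B_0$ and, moreover, $\vect{A_0 B_0} \cap A = A_0$. To prove it I would verify the universal property of $A_0 \otimes_C B_0$ for $\vect{A_0 B_0}$: given LLA-homomorphisms $f : A_0 \to D$ and $g : B_0 \to D$ agreeing on $C$, extend them to $A \to D'$ and $B \to D'$ for a suitable $D' \geq D$ --- this is where one uses that the free amalgam, being built from a free product of Lie algebras modulo a term of the lower central series, admits a basis of iterated Lyndon/Hall brackets in which $A$, $B$ and $C$ sit transparently --- then apply the universal property of $S$ and restrict. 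The same normal form gives the strongness clause $\vect{A_0 B_0} \cap A = A_0$ and shows the Lazard series of $S$ restricts correctly to $\vect{A_0 B_0}$. Granting this lemma, \emph{monotonicity} (from $A \indi\otimes_C B$, $A_0 \leq A$, $B_0 \leq B$ deduce $A_0 \indi\otimes_C B_0$) is immediate, and \emph{base monotonicity} (from $A \indi\otimes_C B$ and $C \leq D \leq \vect{BC}$ deduce $A \indi\otimes_D B$) follows by checking that a free amalgam over $C$ is also a free amalgam over any intermediate base lying in one factor.

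\emph{Full existence} is essentially the construction: given $A, B, C$ with $C$ a common sub-LLA, put $S = \vect{AC} \otimes_C \vect{BC}$; the canonical embedding identifies $A$ with $A' \leq S$ satisfying $A' \cong_C A$, hence $A' \equiv_C A$, and by construction $\vect{A'B} = S = \vect{A'C}\otimes_C\vect{BC}$, i.e.\ $A' \indi\otimes_C B$. \emph{Stationarity} follows from the uniqueness of the free amalgam: if $A' \equiv_C A$ and $B' \equiv_C B$ with $A \indi\otimes_C B$ and $A' \indi\otimes_C B'$, then
\[
\vect{A'B'C} \;\cong\; \vect{A'C}\otimes_C\vect{B'C} \;\cong\; \vect{AC}\otimes_C\vect{BC} \;\cong\; \vect{ABC}
\]
via an isomorphism fixing $C$ pointwise and carrying $A'$ to $A$ and $B'$ to $B$, which yields $A'B' \equiv_C AB$. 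Finally, \emph{transitivity} reduces to transitivity of free amalgamation: for $C \leq D \leq E$, assuming $A \indi\otimes_C D$ and $A \indi\otimes_D E$, one has $\vect{AE} \cong (\vect{AC}\otimes_C\vect{DC})\otimes_{\vect{DC}}\vect{EC} \cong \vect{AC}\otimes_C\vect{EC}$, the isomorphism being obtained by a diagram chase showing the iterated free amalgam satisfies the universal property of the single one.

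The main obstacle is the restriction lemma, that is, obtaining enough control over the normal form of elements of $A \otimes_C B$ to see that subalgebras generated by sub-pieces are again free amalgams and remain strong; once this and the compatibility of the Lazard series are in hand, every remaining property is a formal consequence of the universal property and uniqueness. This is precisely the place where the explicit description of the free nilpotent Lie algebra amalgam due to Baudisch \cite{Baudisch4}, Maier \cite{Maierexpp} and \cite{d2023model} does the real work.
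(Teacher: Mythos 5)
First, note that this statement is not proved in the paper at all: it is imported verbatim as a Fact from \cite[Proposition 4.22, Corollary 4.40]{d2023model}, so there is no in-paper argument to compare yours against; what you have written is an attempt to reprove the content of the cited results. The formal layer of your sketch is the right shape and matches how these properties are standardly derived once the free amalgam, with its universal property, strongness, and good behaviour under passing to subalgebras, is available: symmetry and invariance are indeed formal, transitivity follows from associativity of the amalgam (your pushout computation $(\vect{AC}\otimes_C\vect{DC})\otimes_{\vect{DC}}\vect{EC}\cong \vect{AC}\otimes_C\vect{EC}$ is correct, provided one also checks strongness and generation for the iterated amalgam), and stationarity/full existence come from uniqueness and from realizing the abstract amalgam inside the ambient structure.

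However, as a self-contained proof there are genuine gaps exactly where the real work lies. (i) Your ``restriction lemma'' is the nontrivial content of Baudisch's theorem and of \cite{d2023model}, and the argument you propose for it has a hole: to verify the universal property of $\vect{A_0B_0}$ you need to extend arbitrary LLA homomorphisms $f:A_0\to D$, $g:B_0\to D$ to $A$ and $B$ with values in some $D'\supseteq D$. Such extensions are not automatic --- $\ker f$ need not be an ideal of $A$, and pushout-with-embedding statements in nilpotent varieties are precisely what fails without the Lazard series machinery --- so this step essentially presupposes the amalgamation theorem being invoked. Relatedly, the description of $A\otimes_C B$ as ``a free product modulo a term of the lower central series'' with a transparent Lyndon/Hall basis is not accurate over a nontrivial base $C$ and with the Lazard grading; obtaining a usable normal form there is the delicate part, and deferring it to \cite{Baudisch4,d2023model} means the hard half of the Fact is assumed rather than proved. (ii) Full existence and stationarity are statements about types in an ambient homogeneous structure (the Fra\"iss\'e limit, resp.\ a monster model): you repeatedly pass from ``isomorphic over $C$'' to ``$\equiv_C$'' without invoking ultrahomogeneity or quantifier elimination, and for full existence you must also embed the abstract amalgam into the ambient model over $\vect{BC}$ before you can name $A'$. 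Finally, what you prove for stationarity ($A'B'\equiv_C AB$ for independent pairs) is weaker than the standard statement, which fixes the same $B$ and concludes $A\equiv_{CB}A'$; your argument adapts by taking $B'=B$ and the identity on $\vect{BC}$, but as written it does not yield the property used in the sequel.
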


\subsection{The two-sorted language}
We define a language $\LL_{K,V,c}$ that has two sorts $V$ and $K$.  On $V$, there is a constant symbol $0_{V}$, binary functions $V^{2} \to V$ labelled $+_{V}$, $-_{V}$, $[\cdot,\cdot]_{V}$, and $c+1$ unary predicates $P_{i}$ for $1 \leq i \leq c+1$.  Additionally, for each $n$, we have an $n$-ary relation symbol $\theta_{n}$ on $V$.  On $K$, there is the language of rings: constant symbols $0_{K}$, $1_{K}$, and binary functions $K^{2} \to K$ labelled $+_{K}$, $-_{K}$, and $\times_{K}$.  Additionally, there is a function symbol $\cdot : K \times V \to V$ and for each $n$ and $1 \leq i \leq n$, a function symbol $\pi_{n,i} : V^{n+1} \to K$.  We write $\LL_{K,V,c}^{-}$ for the language $\LL_{K,V,c}$ with the binary function $[\cdot,\cdot]$ removed.  We will usually omit the subscripts on the function symbols on $V$ and $K$, when they are understood from context.

We define an $\LL_{K,V,c}$ theory $T_{0}$ via the following axioms.
\begin{enumerate}
    \item $K$ is a field.
    \item $V$ is a $K$-vector space, with scalar multiplication defined by the function $\cdot : K \times V \to V$.  
    \item $[\cdot,\cdot]:V^{2} \to V$ is an alternating $K$-bilinear map satisfying the Jacobi identity, i.e. defines a Lie algebra on $V$.  
    \item Each $P_{i}$ is a vector subspace of $V$ such that $(P_{i}(V))_{1 \leq i \leq c+1}$ forms a Lazard series for the Lie algebra $(V,[\cdot,\cdot])$, that is, we have  
    $$
    V = P_{1}(V) \supseteq P_{2}(V) \supseteq \ldots \supseteq P_{c}(V) \supseteq P_{c+1}(V) = 0,
    $$
    and, additionally, 
    $$
    [P_{i}(V),P_{j}(V)] \subseteq P_{i+j}(V)
    $$
    for all $1 \leq i,j \leq c+1$ (where $P_{i+j}(V)$ is understood to be the trivial subspace for all $i+j > c$).  
    \item The relation symbol $\theta_{n}(v_{1}, \ldots, v_{n})$ holds if and only if $v_{1}, \ldots, v_{n}$ are linearly independent.  If $v_{1}, \ldots, v_{n}$ are linearly independent and $w$ is in their span, then $\pi_{n,i}(v_{1}, \ldots, v_{n},w) = \alpha_{i}$ where
    $$
    w = \sum_{j = 1}^{n} \alpha_{j}v_{j}. 
    $$
    If the $v_{1}, \ldots, v_{n}$ are not linearly independent, or if $w$ is not in their span, we set $\pi_{n,i}(v_{1}, \ldots, v_{n},w) = 0$. 
 We will refer to the functions $\pi_{n,i}$ as \emph{coordinate functions}. 
\end{enumerate}
Our goal is to axiomatize and study the model companion $T$ of $T_{0}$. See \cite[Section 2.2]{dobrowolskisets} for a discussion of why the coordinate functions are necessary for quantifier elimination, even in the reduct to the underlying two-sorted vector space.  Note, however, that if $M \models T_{0}$ and $v_{1}, \ldots, v_{n} \in V(M)$, it makes sense to say $v_{1}, \ldots, v_{n}$ are linearly independent, without specifying the field: since linear independence in a model $M \models T_{0}$ is equivalent to $M \models \theta_{n}(v_{1}, \ldots, v_{n})$, linear independence will be preserved in extensions to larger models of $T_{0}$. 

It will turn out that, in the model-companion of $T_0$, the field will be algebraically closed, since the model companion of the theory of all fields is ACF. More generally, we could consider Lie algebras over a field $K$ which is not algebraically closed.  If $\LL^{\dagger} \supseteq \LL_{\mathrm{rings}}$ is a language and $T^{\dagger}$ is an $\LL^{\dagger}$-theory extending the theory of fields, we define $T_{0}^{+}$ to be the theory (in the language $\LL_{K,V,c}^+$ which is $\LL_{K,V,c}$ together with $\LL^{\dagger}$ on the field sort) which extends $T_{0}$ with axioms asserting $K \models T^{\dagger}$. 

\subsection{Extension of Scalars} 

If $V$ is a vector space over a field $K$ and $K'/K$ is a field extension, then the \emph{extension of scalars} of $V$ to $K'$ is the $K'$-vector space $K' \otimes_{K} V$.  A set of vectors in $V$ which are linearly independent (over $K$) will remain linearly independent in $K' \otimes_{K} V$ (over $K'$) and the dimension of $K' \otimes_{K} V$ over $K'$ is the same as the dimension of $V$ over $K$.  Any $K$-linear structure on $V$ naturally extends to $K'$-linear structure on $K' \otimes_{K} V$.  In particular, if $[\cdot,\cdot] :V^{2} \to V$ is a Lie bracket on $V$, then we obtain a Lie bracket on $K' \otimes_{K} V$, which we may define on a basis by the following formula:
$$
[\alpha \otimes v, \alpha' \otimes v'] = (\alpha \alpha') \otimes [v,v']
$$
for all $\alpha, \alpha' \in K'$ and $v,v' \in V$.  

\begin{lem} \label{extension of scalars}
    Suppose $M =(K,V) \subseteq N=(K',V')$ are models of $T_{0}$ (or models of $T_{0}^{+}$ for a given theory of fields $T^{\dagger}$).  Then, in any model of $T_0$ containing $N$, we have $\langle K',V\rangle \cong (K', K' \otimes_{K} V)$, where $K' \otimes_{K} V$ denotes the $K'$-vector space obtained from $V$ by extension of scalars. 
\end{lem}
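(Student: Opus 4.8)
The plan is to exhibit an explicit isomorphism $\Phi=(\id_{K'},\phi)$, identity on the field sort, where $\phi\colon K'\otimes_K V\to V'$ is the canonical $K'$-linear map determined by $\phi(\alpha\otimes v)=\alpha\cdot v$ (well defined because scalar multiplication in $N$, restricted to $K\times V$, is $K$-bilinear), and to check that $\Phi$ commutes with every symbol of $\LL_{K,V,c}$. First I would identify $\langle K',V\rangle$ concretely. Its field sort is all of $K'$, already closed under the ring operations; and since $\pi_{n,i}$ takes values in $K'$, no new field elements appear. Writing $W$ for the $K'$-linear span of $V$ inside $V'$, the set $W$ is a $K'$-subspace, it is closed under $[\cdot,\cdot]$ because that bracket is $K'$-bilinear in $N$ and $[V,V]\subseteq V$ (as $V$ is a Lie subalgebra of $V'$), and it is closed under the coordinate functions; so the vector sort of $\langle K',V\rangle$ is exactly $W$, equipped with the Lazard series generated by that of $V$, namely $W_i:=\Span_{K'}(P_i(V))$ for $1\le i\le c+1$. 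This is indeed a Lazard series of $W$: $W_1=W$, $W_{c+1}=0$, $W_i\supseteq W_{i+1}$, and $[W_i,W_j]\subseteq\Span_{K'}[P_i(V),P_j(V)]\subseteq\Span_{K'}(P_{i+j}(V))=W_{i+j}$; moreover $W_i\subseteq P_i(V')$, since $P_i(V)\subseteq P_i(V')$ and $P_i(V')$ is a $K'$-subspace, so $W$ sits inside $N$ as a sub-LLA.

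Next I would check that $\phi$ is an isomorphism of $K'$-vector spaces onto $W$. Choose a $K$-basis $B$ of $V$ adapted to the chain $0=P_{c+1}(V)\subseteq\cdots\subseteq P_1(V)=V$, so that $B_i:=B\cap P_i(V)$ is a $K$-basis of $P_i(V)$ for each $i$; then $\{1\otimes b: b\in B\}$ is a $K'$-basis of $K'\otimes_K V$ and $\phi(1\otimes b)=b$. Because $M\subseteq N$ is an embedding of $\LL_{K,V,c}$-structures, the relations $\theta_n$ are preserved, so the $K$-linearly independent set $B$ remains $K'$-linearly independent in $V'$. Hence $\phi$ carries a $K'$-basis onto a $K'$-linearly independent spanning set of $W$, so $\phi\colon K'\otimes_K V\xrightarrow{\sim}W$; and since $K'\otimes_K P_i(V)$ is $K'$-spanned by $\{1\otimes b:b\in B_i\}$, we get $\phi\bigl(K'\otimes_K P_i(V)\bigr)=W_i$.

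It remains to verify compatibility of $\Phi$ with the remaining symbols. For $0_V$, $+_V$, $-_V$, scalar multiplication, and the ring operations on $K'$ this is immediate from $K'$-linearity of $\phi$ together with $\Phi$ being the identity on the field sort. For $[\cdot,\cdot]$ it suffices to check on simple tensors, where the defining formula for the bracket on $K'\otimes_K V$ and $K'$-bilinearity of $[\cdot,\cdot]$ in $N$ give
\[
\phi\bigl([\alpha\otimes v,\alpha'\otimes v']\bigr)=\phi\bigl((\alpha\alpha')\otimes[v,v']\bigr)=(\alpha\alpha')[v,v']=[\alpha v,\alpha' v']=[\phi(\alpha\otimes v),\phi(\alpha'\otimes v')].
\]
For the predicates $P_i$ we already have $\phi(K'\otimes_K P_i(V))=W_i=P_i(\langle K',V\rangle)$. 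Finally $\theta_n$ and $\pi_{n,i}$ are interpreted in any model of $T_0$ purely through linear independence and coordinates over the field sort; as $\phi$ is a $K'$-linear bijection $K'\otimes_K V\to W$ and the interpretations of $\theta_n$, $\pi_{n,i}$ in $\langle K',V\rangle$ are the restrictions of those in $N$, compatibility follows. The case of $T_0^+$ is identical, the field reduct of $\Phi$ being $\id_{K'}$, which preserves any extra $\LL^{\dagger}$-structure.

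The only genuine content is the injectivity of $\phi$ in the second step, and this is exactly the place where the relation symbols $\theta_n$ in the language do their work: it is the preservation of linear independence along the substructure inclusion $M\subseteq N$ that keeps a $K$-basis of $V$ linearly independent over $K'$ inside $V'$. The other subtle point, easy to overlook, is keeping track of which Lazard series $\langle K',V\rangle$ carries, namely the one $K'$-spanned by the Lazard series of $V$; once that is fixed and one remembers that $V$ is a Lie subalgebra of $V'$, everything else is routine bookkeeping.
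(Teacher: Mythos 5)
Your argument for the core of the lemma is essentially the paper's, organized a little differently: the paper proves by induction on terms that the generated structure has field part inside $K'$ and vector part inside the $K'$-span of $V$ (the coordinate functions being the only nontrivial step), while you get the same identification by a direct closure argument, and both proofs hinge on the same two points — the $\pi_{n,i}$ take values in $K'$, so the field sort does not grow, and preservation of the $\theta_n$ along $M\subseteq N$ keeps a $K$-basis of $V$ linearly independent over $K'$, so the canonical map $K'\otimes_K V\to \Span_{K'}(V)$ is bijective and matches the bracket, the $\theta_n$ and the $\pi_{n,i}$. That part of your write-up is correct and complete.

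Where you overreach is the Lazard series. The substructure $\langle K',V\rangle$ carries the \emph{restricted} predicates $P_i(V')\cap W$, and your assertion that these equal $W_i=\Span_{K'}(P_i(V))$ (your line ``$\phi(K'\otimes_K P_i(V))=W_i=P_i(\langle K',V\rangle)$'') is only half proved: you show $W_i\subseteq P_i(V')\cap W$, and the reverse inclusion can genuinely fail for models of $T_0$. For instance, take $c=2$, $K=\Q$, $K'=\Q(\sqrt2)$, let $V'$ be the two-dimensional abelian (zero-bracket) $K'$-space with basis $e_1,e_2$ and $P_2(V')=K'\cdot(e_1+\sqrt2\,e_2)$, and let $V=\Q e_1+\Q e_2$ with $P_2(V)=P_2(V')\cap V=0$; then $M\subseteq N$ are models of $T_0$, $W=\Span_{K'}(V)=V'$, but $P_2(V')\cap W\neq 0=\Span_{K'}(P_2(V))$, so $\langle K',V\rangle$ is \emph{not} isomorphic to $K'\otimes_K V$ equipped with the series $K'\otimes_K P_i(V)$. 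This is exactly why the paper's statement and proof identify $\langle K',V\rangle$ with the extension of scalars only as a $K'$-vector space with the Lie bracket (together with the $\theta_n$ and $\pi_{n,i}$, which are determined by the linear structure), and say nothing about the $P_i$. So your proof establishes the lemma in the sense the paper intends, but the claim about the induced Lazard series must be dropped or weakened to the single inclusion you actually prove.
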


\begin{proof}
    We prove by induction on terms of $\LL_{K,V,c}$ that if $\overline{\alpha}$ is a tuple of scalars from $K'$ and $\overline{v}$ is a tuple of vectors from $V$, then $t(\overline{\alpha}, \overline{v}) \in K'$ or $t(\overline{\alpha},\overline{v}) = \sum_{i < k} \beta_{i} w_{i}$ for $\beta_{i} \in K'$ and $w_{i} \in V$ for some $k$. Since the expression $\beta w$ in $V'$ can be identified with $\beta \otimes w$ in $K' \otimes_{K} V$, this suffices. The desired conclusion is clear for terms $t$ consisting of variables and constants, so the base case is true. It is also clear that the conclusion is preserved by application of the field operations (or, more generally, the operations in $L^{\dagger}$) and the vector space operations.  Bilinearity of the bracket entails that 
    $$
    \left[\sum_{i < k}\beta_{i} w_{i}, \sum_{j < k'}\beta'_{j}w'_{j}\right] = \sum_{\substack{i < k\\j < k'}} (\beta_{i} \beta'_{j})[w_{i},w'_{j}]
    $$
    where $\beta_{i} \beta'_{j} \in K'$ and $[w_{i},w'_{j}] \in V$, so the conclusion is also preserved by the bracket. Finally, we check the coordinate functions.  We will proceed as in \cite[Lemma 1.5]{nickandlotte}. Suppose we are given $v_{1}, \ldots, v_{n+1}$ of the form 
    $$
    v_{i} = \sum_{j = 1}^{m} \alpha_{i,j} w_{j}
    $$
    for vectors $w_{1}, \ldots, w_{m} \in V$ and $\alpha_{i,j} \in K'$. Note that, as some $\alpha_{i,j}$ are allowed to be zero, we may assume that the vectors $w_{j}$ that appear in the above expression are the same for all $v_{i}$. We assume $v_{1}, \ldots, v_{n}$ are linearly independent and $v_{n+1}$ is in their span, so $v_{n+1} = \sum_{i = 1}^{n} \lambda_{i} v_{i}$. Thus, replacing the $w_{j}$'s with a subset, we may assume that the $w_{j}$ are linearly independent. Further, after possibly extending the set of $v_i$'s by linearly independent $w_j$'s, we may consider $m = n$. As $\lambda_{i} = \pi_{n,i}(v_{1}, \ldots, v_{n+1})$, we must show that each $\lambda_{i} \in K'$.  Writing matrices with respect to the basis $w_{1}, \ldots, w_{n}$, we have 
    $$
\left[ 
\begin{matrix}
    \alpha_{1,1} & \ldots & \alpha_{n,1} \\
    \vdots & \ddots & \\
    \alpha_{1,n} & \dots & \alpha_{n,n} 
\end{matrix}
\right] \left[
\begin{matrix}
    \lambda_{1}  \\
    \vdots \\
    \lambda_{n}
\end{matrix}
\right] = \left[
\begin{matrix}
    \alpha_{n+1,1}  \\
    \vdots \\
    \alpha_{n+1,n}
\end{matrix}
\right].
$$
Writing $B = (\alpha_{i,j}) \in \mathrm{GL}_{n}(K')$, we see that 
$$
B^{-1} \left[
\begin{matrix}
    \alpha_{n+1,1}  \\
    \vdots \\
    \alpha_{n+1,n}
\end{matrix}
\right] =  \left[
\begin{matrix}
    \lambda_{1}  \\
    \vdots \\
    \lambda_{n}
\end{matrix}
\right],
$$
which shows that indeed  each $\lambda_{i} \in K'$.  
    \end{proof}

\subsection{Structure constants}
We write $[n]$ to denote the set $\{1, \ldots, n\}$.  Suppose $K$ is a field.  If $v_{1}, \ldots, v_{n}$ is a basis of a Lie algebra $(L,+,[\cdot,\cdot])$ over $K$, the \emph{structure constants} $(\alpha_{i,j,k})_{i,j,k \in [n]}$ are scalars from $K$ that express the Lie bracket on $L$ in terms of this basis.  More precisely, they are chosen so that 
$$
[v_{i},v_{j}] = \sum_{k = 1}^{n} \alpha_{i,j,k} v_{k}
$$
for all $i,j \in [n]$. The fact that the bracket is alternating is equivalent to the fact that $\alpha_{i,j,k} = -\alpha_{j,i,k}$ for all $i,j,k \in [n]$.  Additionally, the Jacobi identity is equivalent to the equality 
$$
\sum_{l = 1}^{n} \alpha_{j,k,l}\alpha_{i,l,m} + \alpha_{i,j,l} \alpha_{k,l,m} + \alpha_{k,i,l} \alpha_{j,l,m} = 0,
$$
for all $i,j,k,m \in [n]$. Any sequence of scalars satisfying these two conditions will define a Lie bracket on $L$.  

In general, given a sequence $n = k_{1} \geq k_{2} \geq \ldots \geq k_{c} \geq k_{c+1} = 0$, we want to know what properties of the structure constants would define a $c$-nilpotent Lie algebra with basis $v_{1}, \ldots, v_{n}$ and Lazard series $(P_{i})_{1 \leq i \leq c+1}$ defined by $P_{i} = \mathrm{Span}(\{v_{l} : 1 \leq l \leq k_{i}\})$.  The desired condition that $[P_{i},P_{j}] \subseteq P_{i+j}$ is equivalent to saying that if $l \leq k_{i}$ and $m \leq k_{j}$, then 
$$
[v_{l},v_{m}] = \sum_{k} \alpha_{l,m,k} v_{k}
$$
where $\alpha_{l,m,k} = 0$ for all $k > k_{i+j}$.  

To summarize, given $n = k_{1} \geq k_{2} \geq \ldots \geq k_{c} \geq k_{c+1} = 0$, we say that a sequence of scalars $(\alpha_{i,j,k})_{i,j,k \in [n]}$ is a sequence of structure constants for a $c$-nilpotent Lazard Lie algebra over $K$ with dimension $k_{i}$ for the $i$th term of the Lazard series, if it satisfies the following three conditions:
\begin{enumerate}
\item $\alpha_{i,j,k} = - \alpha_{j,i,k}$ for all $i,j,k \in [n]$.
\item $\sum_{l = 1}^{n} \alpha_{j,k,l}\alpha_{i,l,m} + \alpha_{i,j,l} \alpha_{k,l,m} + \alpha_{k,i,l} \alpha_{j,l,m} = 0$, for all $i,j,k,m \in [n]$.
\item $\alpha_{i,j,k} = 0$ for all $i,j \in [n]$ and $k > k_{i+j}$.
\end{enumerate}
Condition (3) holding depends on the order in which the vectors $v_{1}, \ldots, v_{n}$ are enumerated, so we say more generally that $(\alpha_{i,j,k})_{i,j,k \in [n]}$ is a sequence of structure constants for a $c$-nilpotent Lazard Lie algebra over $K$ with dimension $k_{i}$ of the $i$th term of the Lazard series if there's some re-indexing of the vectors that satisfies (1), (2), and (3), i.e. if there is some $\sigma \in S_{n}$ such that $(\alpha_{i,j,\sigma(k)})_{i,j,k \in [n]}$ satisfies (1), (2), and (3).  Writing $\overline{k} = (k_{1}, \ldots, k_{c+1})$, we obtain from the above a formula $\varphi_{\mathrm{str},n,\overline{k}}(x_{1}, \ldots, x_{n^{3}})$ which asserts that $(x_{1}, \ldots, x_{n^{3}})$ are the structure constants for a $c$-nilpotent Lazard Lie algebra over $K$ with $\overline{k}$ forming the dimensions of the terms of the Lazard series.  Then we can define $\varphi_{\mathrm{str},n}(x_{1},\ldots, x_{n^3}) = \bigvee_{\overline{k}} \varphi_{\mathrm{str},n,\overline{k}}(x_{1}, \ldots, x_{n^3})$, which   asserts that $(x_{1}, \ldots, x_{n^{3}})$ are the structure constants for a $c$-nilpotent Lazard Lie algebra over $K$.

For each $n$, we have a formula $\varphi_{\mathrm{Lie},n}(x_{1}, \ldots, x_{n})$, where each $x_{i}$ is in the vector space sort, which asserts that $x_{1}, \ldots, x_{n}$ form a basis of Lie subalgebra of $(V,[\cdot,\cdot])$.  To say this, it is necessary to say that $x_{1}, \ldots, x_{n}$ are linearly independent and that $[x_{i},x_{j}]$ is in the span of $x_{1}, \ldots, x_{n}$ for all $i,j \in [n]$. If $v_{1}, \ldots, v_{n}$ are the basis of a Lie subalgebra, then there is a definable function that associates to $v_{1}, \ldots, v_{n}$ their $n^{3}$ structure constants in $K$.  More precisely, we define 
$$
\mathrm{Str}_{n}(v_{1}, \ldots, v_{n}) = (\pi_{n,k}(v_{1}, \ldots, v_{n},[v_{i},v_{j}]))_{i,j,k \in [n]},
$$
which are the structure constants for the Lie subalgebra spanned by $v_{1}, \ldots, v_{n}$. 

\section{The model companion and elimination of quantifiers} \label{axiom subsection}

\subsection{A quantifier elimination result}
We define an $\LL_{K,V,c}$-theory $T$ that extends $T_{0}$ with the following axiom scheme:
\begin{enumerate}
    \item $K \models ACF$ and $V$ is infinite dimensional. 
    \item For all $n$, we have the axiom
    $$
    (\forall \overline{\alpha} \in K^{n^{3}})[\varphi_{\mathrm{str},n}(\overline{\alpha}) \to (\exists \overline{v} \in V^{n})(\varphi_{\mathrm{Lie},n}(\overline{v}) \wedge \mathrm{Str}_{n}(\overline{v}) = \overline{\alpha})],
    $$
    which asserts that every tuple of scalars that define structure constants is the tuple of structure constants of a Lie subalgebra of $(V,[,])$. 
    \item For all $n, m \geq 1$, we have the axiom 
    $$
    (\forall \overline{v} \in V^{n})(\forall \overline{\alpha} \in K^{(n+m)^{3}})\left[(\varphi_{\mathrm{str},(n+m)}(\overline{\alpha}) \wedge \varphi_{\mathrm{Lie},n}(\overline{v}) \wedge \mathrm{Str}_{n}(\overline{v}) \subseteq \overline{\alpha}) \to (\exists \overline{w} \in V^{m})[\mathrm{Str}_{n+m}(\overline{v},\overline{w}) = \overline{\alpha}]\right]
    $$
    which asserts that if $\overline{v} \in V^{n}$ is the basis of a Lie subalgebra of $V$ and $\overline{\alpha}$ is a tuple of scalars that defines structure constants and which contains the structure constants of $\ol v$, then there is some $\overline{w}$ such that $(\overline{v},\overline{w})$ is the basis of a Lie subalgebra with structure constants $\overline{\alpha}$. 
\end{enumerate}

We will show that $T$ is the model companion of $T_{0}$.  When we are working with Lie algebras over a field satisfying the theory $T^{\dagger}$, then $T_{0}^{+}$ and $T^{+}$ are defined analogously but with adding the condition $K \models T^{\dagger}$ (replacing axiom (1) in $T$). We will sometimes write $T^+ = T^+(T^\dagger)$ to emphasize the dependence of $T^+$ on $T^\dagger$. 
%
%

\begin{lem}
    Every model of $T_{0}$ extends to a model of $T$.  Additionally, every model of $T_{0}^{+}$ extends to a model of $T^{+}$.  
\end{lem}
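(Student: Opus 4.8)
The plan is a ``construct-and-close-off'' chain argument. The two tools doing the work are the free amalgamation of LLAs from Definition~\ref{def:freeamalgambaudisch} (together with Fact~\ref{fact:basicpropertiesoffreeindependence}), used to satisfy the axiom schemes (2) and (3) of $T$, and Lemma~\ref{extension of scalars}, used for the clause $K \models \ACF$ of axiom (1); we also use that the axioms of $T_{0}$ are $\Pi_{2}$ and hence preserved under unions of chains. First I would handle the field: given $M = (K,V) \models T_{0}$, extension of scalars gives $(K^{\mathrm{alg}}, K^{\mathrm{alg}} \otimes_{K} V)$, which is again a model of $T_{0}$ --- the bracket, the Lazard filtration $(P_{i})$, and linear independence all extend along $K \hookrightarrow K^{\mathrm{alg}}$, as in the discussion before Lemma~\ref{extension of scalars} --- contains $M$, and has algebraically closed field. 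For the $T_{0}^{+}$-statement this step is skipped, since a model of $T_{0}^{+}$ already satisfies $K \models T^{\dagger}$ and no later step will change the field. So in both cases it remains to extend a given $M \models T_{0}$ to a model of $T_{0}$ with the same field which additionally has $\dim V = \infty$ and satisfies (2) and (3).

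For the one-step extension, suppose $M_{i} \models T_{0}$, $\overline{v} \in V(M_{i})^{n}$ with $M_{i} \models \varphi_{\mathrm{Lie},n}(\overline{v})$, and $\overline{\alpha} \in K^{(n+m)^{3}}$ with $M_{i} \models \varphi_{\mathrm{str},n+m}(\overline{\alpha})$ and $\mathrm{Str}_{n}(\overline{v}) \subseteq \overline{\alpha}$ (axiom (2) being the case $n = 0$). The formula $\varphi_{\mathrm{str},n+m}(\overline{\alpha})$ codes a $c$-nilpotent LLA $B$ over $K$ on a basis $e_{1}, \dots, e_{n+m}$ with structure constants $\overline{\alpha}$ and an explicit Lazard series, and $\mathrm{Str}_{n}(\overline{v}) \subseteq \overline{\alpha}$ says that $v_{k} \mapsto e_{k}$ is an LLA embedding of $C := \langle \overline{v}\rangle \leq V(M_{i})$ into $B$. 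Set $M_{i+1} := (K,\, V(M_{i}) \otimes_{C} B)$, with $\theta_{n}$ and the coordinate functions given their forced values; then $M_{i+1} \models T_{0}$, $M_{i}$ is a substructure of $M_{i+1}$ via $V(M_{i}) \hookrightarrow V(M_{i}) \otimes_{C} B$, and, writing $w_{j}$ for the image of $e_{n+j}$, one has $\mathrm{Str}_{n+m}(\overline{v}, \overline{w}) = \overline{\alpha}$ since $(\overline{v}, \overline{w})$ maps onto the basis $(e_{1}, \dots, e_{n+m})$ of the embedded copy of $B$. Iterating over all such data via a bookkeeping produces a chain $M = M_{0} \subseteq M_{1} \subseteq \cdots$; the union $N$ models $T_{0}$ ($T_{0}$ being $\Pi_{2}$), has $\dim V(N) = \infty$ (already forced by the instances of (2) with $\overline{\alpha} = \overline{0}$, which demand abelian subalgebras of every finite dimension), retains its field, and satisfies (1)--(3) because each instance uses only finitely many parameters, hence occurs and is satisfied in some $M_{j} \subseteq N$ with quantifier-free witnesses that persist upward. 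So $N \models T$, respectively $N \models T^{+}$.

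The step I expect to be the real obstacle is the coding claim just used: that the hypotheses $\varphi_{\mathrm{str},n+m}(\overline{\alpha})$, $\varphi_{\mathrm{Lie},n}(\overline{v})$, $\mathrm{Str}_{n}(\overline{v}) \subseteq \overline{\alpha}$ really do supply a $c$-nilpotent LLA $B$ over $K$ \emph{together with} an LLA embedding of $\langle \overline{v}\rangle$ into it --- so that the Lazard series on $\langle \overline{v}\rangle$ induced from $V(M_{i})$ is respected and the free amalgam of Definition~\ref{def:freeamalgambaudisch} is genuinely defined, not merely a Lie-algebra amalgam. Verifying this requires using the precise form of $\varphi_{\mathrm{str}}$ and $\varphi_{\mathrm{Lie}}$, in particular the way they track the Lazard filtration and not just the Lie bracket, and possibly choosing the Lazard series on $B$ (among those witnessing $\varphi_{\mathrm{str}}$) compatibly with the one already carried by $\langle\overline{v}\rangle$. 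The remaining points --- preservation of $T_{0}$ under extension of scalars and under free amalgamation, $\Pi_{2}$-ness of $T_{0}$, and the bookkeeping --- are routine.
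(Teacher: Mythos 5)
Your proposal is correct and follows essentially the same route as the paper: reduce the field question via extension of scalars, then realize the extension axioms by a chain/union construction in which each step adjoins the required subalgebra via the free amalgam of LLAs over the given finite-dimensional subalgebra, with the $\forall\exists$ nature of $T_0$ and persistence of witnesses handling the union. The only (harmless) difference is that you treat axiom scheme (2) as the $n=0$ case of scheme (3) and use free amalgamation uniformly, whereas the paper first satisfies (2) by a direct-sum construction and then runs the amalgamation chain only for (3); likewise, the Lazard-series compatibility you flag is treated at the same level of detail in the paper, which simply lists pairs $(L_j,\overline{\alpha}_j)$ with $\overline{\alpha}_j$ the structure constants of an LLA extending $L_j$.
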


\begin{proof}
 If $M = (K(M), V(M), [\cdot,\cdot]^{M}) \models T_{0}$, then, by extension of scalars, we can assume $K(M)$ is an algebraically closed field. Hence $T = T^{+}$ for the special case that $\LL^{\dagger} = \LL_{\mathrm{rings}}$ and $T^{\dagger} = ACF$.  Therefore, it suffices to argue that if $M = (K(M), V(M), [\cdot,\cdot]^{M}) \models T_{0}^{+}$, then $M$ embeds into a model of $T^{+}$.  
 
 Let $(\overline{\alpha}_{i})_{i}$ list all possible finite sequences of scalars from $K(M)$ which are the structure constants of a finite dimensional $c$-nilpotent LLA $L_{i}$ over $K(M)$.  Let $V' = V \oplus \bigoplus_{i} L_{i}$ and let $[\cdot,\cdot]' : V' \times V' \to V'$ be the associated direct sum Lie bracket.  Then $M' = (K(M), V', [\cdot,\cdot]')$ satisfies the axiom schema (1) and (2) in the definition of $T^{+}$.  

    To satisfy the axiom scheme (3), we will build the model as the union of a chain.  Let $M_{0} = M'$ be the model as constructed above. Now, given $M_{i}$, we will construct an extension $M_{i+1}$ such that if $\overline{v}$ is a basis of a finite dimensional subalgebra of $(V(M_{i}),[\cdot,\cdot]^{M_{i}})$ and $\overline{\alpha}$ is a finite sequence of structure constants for some $c$-nilpotent LLA extending the subalgebra spanned by $\overline{v}$, then there is some $\overline{w}$ in $V(M_{i+1})$ such that $\overline{v}$ and $\overline{w}$ together form a basis for a subalgebra of $(V(M_{i+1}),[\cdot,\cdot]^{M_{i+1}})$ with structure constants $\overline{\alpha}$. Let $(L_{j},\overline{\alpha}_{j})_{j < \lambda}$ list all pairs where $L_{j}$ is a finite dimensional subalgebra of $(V(M_{i}),[\cdot,\cdot]^{M_{i}})$ and $\overline{\alpha}_j$ is a finite sequence of structure constants for some $c$-nilpotent LLA extending $L_{j}$.  For each $j$, let $L'_{j}$ be an LLA extending $L_{j}$ with structure constants $\overline{\alpha}_{j}$.  
    
    We define $N_{0}$ to be the LLA $(V(M_{i}), [\cdot,\cdot]^{M_{i}})$.  Then given $N_{j}$ for some $j < \lambda$, we define $N_{j+1}$ to be the free amalgam of $N_{j}$ and $L'_{j}$ over $L_{j}$, as LLAs over $K(M_{i})$. Note that this exists by \cite[Theorem 4.35]{d2023model} (where it is denoted $N_{j} \otimes_{L_{j}} L'_{j}$, viewing them as LLAs over $K(M_{i})$).  We view $N_{j+1}$ as an extension of $N_{j}$.  As for limit ordinals $\delta \leq \lambda$, we define $N_{\delta} = \bigcup_{j < \delta} N_{j}$. Then we set $M_{i+1} = (K(M_{i}), N_{\lambda}, [\cdot,\cdot]^{N_{\lambda}})$. 

    To conclude we set $M'' = \bigcup_{i < \omega} M_{i}$. Axiom scheme (1) is clearly satisfied.  Moreover, since $M'$ satisfies the second axiom scheme and $M''$ has the same set of scalars, $M''$ satisfies it as well.  Finally, if $L$ is a finite dimensional subalgebra of $M''$ and $\overline{\alpha}$ is a sequence of structure constants for a finite dimensional extension of $L$, then $L$ is contained in $M_{i}$ for some $i$ and hence a extension of $L$ with structure constants $\overline{\alpha}$ can be found as a subalgebra of $M_{i+1}$, hence of $M''$. This shows the axiom scheme (3) is satisfied as well, completing the proof. 
\end{proof}

\begin{lem}
    The theory $T$ has quantifier elimination and its completions are determined by specifying the characteristic of the field.  More generally, if $T^{\dagger}$ is a complete theory extending the theory of fields with quantifier elimination, then $T^{+}$ is complete and has quantifier-elimination.  
\end{lem}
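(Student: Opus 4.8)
My plan is to prove quantifier elimination via the standard substructure‑extension criterion and then read off completeness. It suffices to show: whenever $M, N \models T^{+}$ with $N$ a $|M|^{+}$‑saturated model, $A$ is a common substructure, and $b \in M$, the inclusion $A \hookrightarrow N$ extends to an embedding $\langle A, b\rangle \hookrightarrow N$. Iterating this along an enumeration of $M$ (taking unions at limit stages) yields an embedding $M \hookrightarrow N$, and a routine compactness argument then gives QE. For the general statement I would run the same argument with $T^{\dagger}$ in place of $\ACF$, and the case $T = T^{+}(\ACF_{p})$ is then a special case.

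First I would dispose of the field sort once and for all. Since $T^{\dagger}$ is complete with quantifier elimination, the restriction $A \cap K(M) \hookrightarrow K(N)$ extends, by $|K(M)|^{+}$‑saturation of $K(N)$, to an (automatically elementary) embedding $\sigma : K(M) \hookrightarrow K(N)$. Combining $\sigma$ with the given embedding on $A \cap V(M)$ and extending scalars via Lemma~\ref{extension of scalars} — the key point being that closure under the coordinate functions $\pi_{n,i}$ forces $A \cap V(M)$ to control the linear algebra over $K(M)$ — reduces the problem to the case where the substructure already contains all of $K(M)$, say $A = (K(M), U)$ with $U$ a sub‑LLA of $V(M)$, and where $\sigma$ is fixed. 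From here on adjoining a new vector $b$ does not enlarge the field sort, because the coordinates of a vector with respect to vectors already present lie in $K(M)$; so the remainder of the construction takes place over the fixed field.

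Next the vector sort: given $A = (K(M), U)$ and $b \in V(M)$, the substructure $\langle A, b\rangle$ equals $(K(M), U')$ with $U' = \langle U, b\rangle$ again a sub‑LLA, and $U'$ is finite‑dimensional over $K(M)$ whenever $U$ is, since nilpotency of class $c$ bounds the depth of iterated brackets. I would choose a basis of $U'$ compatible with the Lazard flag $(P_{i}(V(M)) \cap U')_{i}$ and extending a flag‑compatible basis of $U$, and read off the structure constants $\overline{\alpha}$ (a tuple from $K(M)$) of $U'$. Since $\varphi_{\mathrm{str}}$ is quantifier‑free, $\sigma(\overline{\alpha})$ is still a valid tuple of structure constants over $K(N)$ extending those of $\sigma(U)$, so Axiom~(3) of $T^{+}$, applied inside $N$, furnishes a sub‑LLA of $V(N)$ extending $\sigma(U)$ with exactly the structure constants $\sigma(\overline{\alpha})$ (with Axiom~(2) handling the base step; because everything here is finite‑dimensional, no saturation of $N$ is needed in this phase). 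Mapping the chosen basis of $U'$ onto it extends the embedding, and running this along an enumeration of $V(M)$ finishes the construction of $M \hookrightarrow N$. For completeness: any two models of $T^{+}$ of the same characteristic contain isomorphic copies of the substructure generated by $\emptyset$ (the prime ring of $K$ with its $\LL^{\dagger}$‑structure, together with the zero subspace — isomorphic since $T^{\dagger}$ is complete with QE), so QE forces elementary equivalence; taking $T^{\dagger} = \ACF$, which becomes complete upon fixing the characteristic, recovers the statement for $T$.

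The step I expect to be the main obstacle is the bookkeeping around the Lazard predicates and coordinate functions in the vector phase: one must check that the sub‑LLA of $V(N)$ produced by Axiom~(3) really matches $U'$ as an $\LL_{K,V,c}$‑structure — not just that the Lie brackets agree on a basis, but that linear independence ($\theta_{n}$), the coordinate functions ($\pi_{n,i}$) and, most delicately, the Lazard filtration ($P_{i}$) are all preserved. Getting this right is precisely where one must work consistently with flag‑compatible bases and lean on the full strength of the free‑amalgamation theorem of \cite{d2023model} as encoded in the axioms of $T$.
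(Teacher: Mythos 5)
Your overall strategy (embed $M$ into a $|M|^{+}$-saturated $N$ over a common substructure, handle the field sort first via quantifier elimination of $T^{\dagger}$ plus saturation and extension of scalars, then adjoin vectors using structure constants and axiom schemes (2) and (3)) uses the same core ingredients as the paper, which instead runs a back-and-forth between two $\aleph_{0}$-saturated models through \emph{finitely generated} partial isomorphisms (its Cases 1--3). The genuine gap is in your vector phase: you assert that ``everything here is finite-dimensional, no saturation of $N$ is needed,'' but in your set-up the base is $(K(M),U)$ where $U$ grows along the enumeration of $V(M)$ (and the initial $A$ need not be finitely generated), so after infinitely many steps $U$ is infinite-dimensional over $K(M)$. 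Adjoining a single new vector $b$ then imposes infinitely many constraints on its image $b'$: the values $[b',f(u)]$ for $u$ ranging over a basis of $U$, the linear (in)dependence patterns and coordinate values of $b'$ against all finite tuples from $f(U)$, and the $P_{i}$-memberships. A single application of axiom scheme (3), which speaks only of finite tuples, cannot produce such a $b'$, and in a non-saturated $N$ no such $b'$ need exist at all; so the extension step as written fails at late stages of the iteration.

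The repair lives inside your own framework: use the $|M|^{+}$-saturation of $N$ that you assumed at the outset to realize the image of the full quantifier-free type of $b$ over $K(M)\cup U$, with finite satisfiability supplied precisely by your finite-dimensional argument (flag-compatible basis, structure constants, axioms (2) and (3)); alternatively, restructure the proof as the paper does, as a system of finitely generated partial isomorphisms (required to be $T^{\dagger}$-elementary on the field sort) between $\aleph_{0}$-saturated models, so that every vector-sort step really is finite-dimensional. Your finite-dimensional core and your treatment of the field sort are otherwise sound and agree in substance with the paper's Cases 1--3: quantifier elimination of $T^{\dagger}$ makes the partial field map elementary, and closure under the coordinate functions guarantees the field part never grows once it contains $K(M)$; in fact, fixing $\sigma$ on all of $K(M)$ at once and adjoining a basis of all of $U'$ in one application of axiom (3) lets you bypass the paper's reverse induction on the Lazard level in its Case 3. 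The issue you flag about verifying the $P_{i}$-memberships of the new witnesses is real, but the paper's own Case 3 passes over it at the same level of detail, so it is not a defect specific to your argument; your completeness argument via the substructure generated by $\emptyset$ is fine.
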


\begin{proof}
We first argue for $T$.  We use the well-known criterion for QE that if $M$ and $N$ are two $\aleph_{0}$-saturated models of $T$ whose fields have the same characteristic, then the set of partial isomorphisms from a finitely generated substructure of $M$ to a finitely generated substructure of $N$ has the back-and-forth property. Non-emptiness is easy, since if $K_{0}$ is the prime subfield then the unique map $f:(K_{0},\set{0})\to (K_{0},\set{0})$ is a partial isomorphism. 

Now suppose $M_{0} \subseteq M$ and $N_{0} \subseteq N$ are finitely generated substructures and $f : M_{0} \to N_{0}$ is an isomorphism. Suppose $a \in M \setminus M_{0}$.  Let $M_{1} = \langle a M_{0} \rangle$. We have to consider three cases:

\textbf{Case 1}:  $a \in K(M)$.  

Because $K(N)$ is algebraically closed, we can find some $b$ such that the isomorphism from $K(M_{0})$ to $K(N_{0})$ given by $f$ extends to an isomorphism from $K(M_{1})$, i.e. the subfield of $K(M)$ generated by $K(M_{0})a$, to the subfield of $K(N)$ generated by $K(N_{0})b$ and mapping $a \mapsto b$.  Let $N_{1} = \langle N_{0},b \rangle$.  Then, by Lemma \ref{extension of scalars}, $(V(M_{1}), [\cdot,\cdot])$ and $(V(N_{1}), [\cdot,\cdot])$ are obtained by extension of scalars from $K(M_{0})$ to $K(M_{1})$ and from $K(N_{0})$ to $K(N_{1})$ respectively.  The isomorphism $K(M_{1})$ to $K(N_{1})$ extending $f$, then, induces an isomorphism from $V(M_{1})$ to $V(N_{1})$ respecting the Lie algebra structure, as desired. 

\textbf{Case 2}: $a \in V(M)$ and $a$ is in the $K(M)$-span of $V(M_{0})$.  

Choose a basis $\overline{v}$ for $V(M_{0})$.  If $a$ is in the $K(M)$-span of $\overline{v} = (v_{0}, \ldots, v_{n-1})$ then, since $a \not\in V(M_{0})$, it must be the case that 
$$
a = \sum_{i < n} \alpha_{i} v_{i}
$$
with not all $\alpha_{i} \in K(M_{0})$.  Applying Case 1 at most $n$ times, we may extend $f$ to an isomorphism $f': \langle M_{0}, \alpha_{<n} \rangle \to \langle N_{0}, \beta_{<n} \rangle$ where $f'(\alpha_{i}) = \beta_{i}$ for all $i < n$.  Note that then
$$
f'(a) = \sum_{i < n} \beta_{i}f(v_{i}).
$$
Because we have the coordinate functions in the language, we have $M_{1} = \langle M_{0}, \alpha_{<n}\rangle$ and, setting $b = f'(a)$, we also have $N_{1} = \langle N_{0},b \rangle = \langle N_{0}, \beta_{<i} \rangle$ and $f' :M_{1} \to N_{1}$ is the desired extension. 

\textbf{Case 3}:  $a \in V(M)$ and $a$ is not in the $K(M)$-span of $V(M_{0})$.

We may assume that $[a,u]$ is in the $K(M)$-span of $V(M_{0})$ for all $u \in V(M_{0})$ (i.e that the Lie algebra over $K(M)$ spanned by $V(M_{0})$ is an ideal of the Lie algebra over $K(M)$ spanned $V(M_{1})$) since the general case follows from this one by reverse induction on the maximum $1\leq i\leq c+1$ such that $a\in P_i(M)\setminus P_{i+1}(M)$. Let $\{v_1,\dots, v_n\}$ be a $K(M_0)$-basis of $M_0$ and let $M_{0}'$ be the substructure of $M$ generated by $M_{0}$ and $\{[a,v_{i}] : i < n\}$.  Notice that $\overline{v}$ is still a basis of $V(M_{0}')$ though the field may grow. 
 By at most $n$ applications of Case 2, there is a structure $N_{0} \subseteq N_{0}' \subseteq N$ and an isomorphism $f': M_{0}' \to N_{0}'$ extending $f$.  

 Let $\overline{\alpha}$ be the sequence of structure constants for the subalgebra of $M$ spanned by $\overline{v}$ and $a$. By construction, $\overline{\alpha}$ is contained in $K(M_{0}')$ and hence $f'(\overline{\alpha})$ is contained in $K(N_{0}')$.  Since $f'(\overline{\alpha})$ is a sequence of structure constants for an LLA extending $f'(\overline{v})$, the axioms of $T$ entail that there is some $b \in V(N)$ such that $f'(\overline{v})$ and $b$ span an LLA with structure constants $f'(\overline{\alpha})$.  Then the map extending $f'$ and mapping $a \mapsto b$ defines an isomorphism from $M_{1} = \langle M_{0}'a \rangle$ to $\langle N_{0}'b \rangle$, which gives the desired extension.  

 The proof in the case that we are considering $T^{+}$ is essentially identical, with minor modifications. 
 Instead of considering all partial isomorphisms, we only consider those partial isomorphisms in which the induced $L^\dagger$-isomorphism of fields $K(M_0)\to K(N_0)$ is $T^\dagger$-elementary.  Then, since $K(M)$ and $K(N)$ are $\aleph_{0}$-saturated as models of $T^{\dagger}$, this map may be extended to incorporate a new field element, which yields Case 1. Case 2 and 3 are identical.
\end{proof}

    The preceding proof yields the following result: 
        \begin{center}
            $(\star)\quad$ the family of $\LL_{K,V,c}$-isomorphisms between substructures of models of $T^+$ such that the restriction to the field sort is $\LL^\dagger$-elementary has the back and forth property. 
        \end{center} This has the following important consequence for $T^{+}$:  if $A$ and $B$ are substructures of a monster model $\mathbb{M} \models T^{+}$ which contain a common substructure $C$, if $K(A) \equiv_{K(C)} K(B)$ in $T^{\dagger}$ and there is an isomorphism $A \to B$ over $C$, then $A \equiv_{C} B$.  This says that $T^{+}$ eliminates quantifiers relative to the field sort.  This is recorded in the following corollary:

\begin{corollary}\label{cor:typedecomposition}
    Let $(K,V)$ be a model of $T^+$. Then for all tuples $\bar \alpha,\bar \beta$ from $K$ and $\bar a,\bar b$ such that $\vect{\bar \alpha,\bar a} = (\bar \alpha, \bar a)$ and $\vect{\bar \beta, \bar b} = (\bar \beta, \bar b)$ (i.e. $\overline{\alpha}\overline{a}$ and $\overline{\beta} \overline{b}$ enumerate substructures) from $V$ we have 
    \[\bar \alpha \equiv^{\LL^\dagger} \bar \beta \text{ and } (\bar \alpha,\bar a) \equiv^{qf,\LL_{K,V,c}} (\bar \beta,\bar b) \iff (\bar \alpha,\bar a) \equiv^{\LL^+} (\bar \beta,\bar b)\]
\end{corollary}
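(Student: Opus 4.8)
The plan is to read this off directly from $(\star)$ — equivalently, from the consequence for $T^{+}$ recorded immediately above the corollary — once the two type-equalities on the left are repackaged as membership of a single partial isomorphism in the relevant back-and-forth family. The implication $\Leftarrow$ needs nothing new: $\LL_{K,V,c}$ is a sublanguage of $\LL^{+}$ and $\LL^{\dagger}$ (on the field sort) is a sublanguage of $\LL^{+}$, so from $(\bar\alpha,\bar a)\equiv^{\LL^{+}}(\bar\beta,\bar b)$ one obtains, by restricting attention to quantifier-free $\LL_{K,V,c}$-formulas and to $\LL^{\dagger}$-formulas in the field variables respectively, both $(\bar\alpha,\bar a)\equiv^{qf,\LL_{K,V,c}}(\bar\beta,\bar b)$ and $\bar\alpha\equiv^{\LL^{\dagger}}\bar\beta$.

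For $\Rightarrow$, I would let $A$ and $B$ be the substructures of $(K,V)$ enumerated by $(\bar\alpha,\bar a)$ and $(\bar\beta,\bar b)$; by the enumeration hypotheses, $\bar\alpha$ enumerates $K(A)$, $\bar a$ enumerates $V(A)$, and likewise for $B$. Since $\bar\alpha\bar a$ and $\bar\beta\bar b$ enumerate substructures with the same quantifier-free $\LL_{K,V,c}$-type, the coordinate-matching map $f\colon A\to B$ ($\bar\alpha\mapsto\bar\beta$, $\bar a\mapsto\bar b$) is well defined and is an $\LL_{K,V,c}$-isomorphism (in particular it fixes the prime substructure, whose elements are values of closed $\LL_{\mathrm{rings}}$-terms). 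The hypothesis $\bar\alpha\equiv^{\LL^{\dagger}}\bar\beta$ says precisely that $f\restriction K(A)\colon K(A)\to K(B)$ is $\LL^{\dagger}$-elementary. Hence $f$ lies in the family appearing in $(\star)$.

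It then remains to invoke the back-and-forth property of that family. Replacing $(K,V)$ harmlessly by a monster model $\mathbb{M}\succeq(K,V)$ (which changes none of the four type conditions), one may simply quote the consequence stated above the corollary with $C$ the prime substructure: $K(A)\equiv_{K(C)}K(B)$ holds because $\bar\alpha\equiv^{\LL^{\dagger}}\bar\beta$ and $K(C)$ is the prime field (hence $\emptyset$-definable pointwise in $\LL^{\dagger}$), $f$ is the required isomorphism over $C$, and the conclusion $A\equiv_{C}B$ is exactly $(\bar\alpha,\bar a)\equiv^{\LL^{+}}(\bar\beta,\bar b)$. Alternatively, working directly inside the saturated model $(K,V)$, one extends $f$ one element at a time — through the field, spanning, and ideal cases exactly as in the quantifier-elimination lemma, using saturation of $(K,V)$ to keep the range inside $(K,V)$ — and the standard induction on formula complexity shows $f$ is $\LL^{+}$-elementary.

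The only point that calls for any care is the verification that $f$ genuinely belongs to the family of $(\star)$, i.e. that the quantifier-free $\LL_{K,V,c}$-type equality and the $\LL^{\dagger}$-type equality combine into one $\LL_{K,V,c}$-isomorphism with $\LL^{\dagger}$-elementary field restriction; this is immediate from the substructure-enumeration hypotheses, so there is no substantive obstacle — the real work has been done in the quantifier-elimination argument and in establishing $(\star)$.
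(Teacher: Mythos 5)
Your proposal is correct and is essentially the paper's own argument: the paper proves this corollary simply by citing $(\star)$, and your write-up just spells out the routine verification that the two hypotheses on the left package the coordinate-matching map into a member of the back-and-forth family of $(\star)$ (with the converse being trivial restriction of the $\LL^{+}$-type). No gaps; the extra care about the prime substructure and saturation is exactly the implicit content of "immediately follows from $(\star)$."
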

\begin{proof}
    Immediately follows from $(\star)$.
\end{proof}

\begin{cor}
 $T$ is the model completion of $T_{0}$.
\end{cor}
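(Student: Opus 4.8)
The plan is to read this off from the two preceding lemmas, which already do all the work. Recall that, for theories in a common language, $T$ is the \emph{model completion} of $T_{0}$ exactly when (i) $T \supseteq T_{0}$; (ii) every model of $T_{0}$ embeds into a model of $T$; and (iii) for every $M \models T_{0}$, the theory $T \cup \mathrm{Diag}(M)$ is complete. Item (i) holds by the way $T$ was defined (as $T_{0}$ together with an axiom scheme), and item (ii) is precisely the lemma stating that every model of $T_{0}$ extends to a model of $T$. So the task reduces to verifying (iii).

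For (iii), I would fix $M \models T_{0}$ and take any two models $N_{1}, N_{2} \models T \cup \mathrm{Diag}(M)$; unwinding, each $N_{j}$ is a model of $T$ carrying an $\LL_{K,V,c}$-embedding of $M$, which we regard as an inclusion $M \subseteq N_{j}$ of substructures. Given an $\LL_{K,V,c}$-formula $\varphi(\overline{x})$ and a tuple $\overline{a}$ from $M$, quantifier elimination for $T$ (the preceding lemma) yields a quantifier-free $\psi(\overline{x})$ with $T \models \forall\overline{x}\,(\varphi(\overline{x}) \leftrightarrow \psi(\overline{x}))$. Since the truth value of a quantifier-free formula on a tuple from a substructure is the same as in any extension, $N_{j} \models \psi(\overline{a})$ iff $M \models \psi(\overline{a})$ for $j=1,2$; hence $N_{1} \models \varphi(\overline{a})$ iff $N_{2} \models \varphi(\overline{a})$. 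Thus $T \cup \mathrm{Diag}(M)$ is complete, and $T$ is the model completion of $T_{0}$.

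I do not expect any genuine obstacle: all the mathematical content sits in the earlier quantifier elimination lemma and the earlier embedding lemma, and the deduction above is purely formal. The only points deserving a line of care are bookkeeping — that a model of $T \cup \mathrm{Diag}(M)$ is the same thing as a model of $T$ equipped with an embedding of $M$, and that in (iii) the structure $M$ ranges over models of $T_{0}$ rather than of $T$. As a by-product one obtains the standard consequences that $T$ is the model companion of $T_{0}$ and that $T_{0}$ has the amalgamation property, though neither is needed for the statement; and the same argument, run with the relative quantifier elimination recorded above, gives an analogous statement for $T^{+}$ over $T_{0}^{+}$ under suitable hypotheses on $T^{\dagger}$.
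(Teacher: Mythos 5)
Your argument is correct and is exactly what the paper intends: the corollary is deduced from the two preceding lemmas (every model of $T_{0}$ extends to a model of $T$, and $T$ has quantifier elimination), with the paper leaving the standard bookkeeping you spell out as ``immediate.'' No gaps; the QE lemma gives substructure-completeness of $T$, which together with the embedding lemma yields completeness of $T \cup \Diag(M)$ for every $M \models T_{0}$.
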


\begin{proof}
    Immediate. 
\end{proof}

Our quantifier elimination result can be slightly upgraded, in a way that will be especially useful in our later proof that $T^{+}$ is NSOP$_{4}$.  

\begin{lem} \label{lem:better base}
    Suppose $A_{i} = \langle A_{i},B \rangle$ for $i = 0,1$ and $A_{0} \equiv_{B} A_{1}$, where $B$ is a substructure of $\mathbb{M}$. Suppose $K(B) \subseteq K' \subseteq K(\mathbb{M})$ and $K(A_{0}) \equiv_{K'} K(A_{1})$.  Then $A_{0} \equiv_{BK'} A_{1}$. 
\end{lem}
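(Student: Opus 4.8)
The plan is to produce a single $\LL_{K,V,c}$-isomorphism and then invoke $(\star)$. I work in a monster model $\mathbb{M}\models T^{+}$. Unwinding the hypotheses, I fix an $\LL_{K,V,c}$-isomorphism $g\colon A_{0}\to A_{1}$ fixing $B$ pointwise such that $g\restriction K(A_{0})$ is $\LL^{\dagger}$-elementary over $K'$: the isomorphism over $B$ is provided by $A_{0}\equiv_{B}A_{1}$ (as the restriction of an automorphism of $\mathbb{M}$ fixing $B$, whose field-sort restriction is automatically $\LL^{\dagger}$-elementary), and $K(A_{0})\equiv_{K'}K(A_{1})$ says that this field-sort restriction is in fact $\LL^{\dagger}$-elementary over all of $K'$, not merely over $K(B)$. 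By Lemma~\ref{extension of scalars} (applied over $K(A_{i})$, together with its proof), the substructure $\langle A_{i},K'\rangle$ of $\mathbb{M}$ is obtained from $A_{i}$ by extension of scalars along $K(A_{i})\hookrightarrow K(\langle A_{i},K'\rangle)$, so that $V(\langle A_{i},K'\rangle)\cong K(\langle A_{i},K'\rangle)\otimes_{K(A_{i})}V(A_{i})$ with bracket, Lazard predicates and coordinate functions obtained by base change; likewise $\langle B,K'\rangle$ is obtained from $B$ by extension of scalars to $K'$. It therefore suffices to extend $g$ to an $\LL_{K,V,c}$-isomorphism $f\colon\langle A_{0},K'\rangle\to\langle A_{1},K'\rangle$ fixing $\langle B,K'\rangle$ pointwise and with $\LL^{\dagger}$-elementary restriction to the field sort, for then $(\star)$ together with the homogeneity of $\mathbb{M}$ extends $f$ to an automorphism of $\mathbb{M}$ fixing $\langle B,K'\rangle=BK'$ and restricting to $g$ on $A_{0}$, whence $A_{0}\equiv_{BK'}A_{1}$.

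To build $f$, I would first use that $g\restriction K(A_{0})$ is $\LL^{\dagger}$-elementary over $K'$: this forces it to fix every element of $K(A_{0})\cap K'$, so $(g\restriction K(A_{0}))\cup\mathrm{id}_{K'}$ is a well-defined partial $\LL^{\dagger}$-elementary map of $K(\mathbb{M})$, which I extend to an $\LL^{\dagger}$-automorphism $\widehat{F}$ of $K(\mathbb{M})$. Then I set $f=\widehat{F}$ on the scalar part of $\langle A_{0},K'\rangle$ and, on the vector part, $f\bigl(\sum_{j}\lambda_{j}v_{j}\bigr)=\sum_{j}\widehat{F}(\lambda_{j})\,g(v_{j})$ for $v_{j}\in V(A_{0})$ and scalars $\lambda_{j}$ of $\langle A_{0},K'\rangle$. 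The extension-of-scalars description from Lemma~\ref{extension of scalars}, together with the fact that a $K(A_{0})$-basis of $V(A_{0})$ stays linearly independent over every larger ring of scalars (witnessed by $\theta_{n}$, which is preserved in extensions), shows $f$ is well defined; and a routine check using bilinearity of the bracket, the base-change descriptions of the $P_{i}$ and of the $\pi_{n,i}$, and the fact that $\widehat{F}$ is an $\LL^{\dagger}$-automorphism, shows $f$ is an $\LL_{K,V,c}$-isomorphism onto $\langle A_{1},K'\rangle$ extending $g$. It fixes $\langle B,K'\rangle$ pointwise, since $\widehat{F}$ fixes $K'$ and $f(\kappa\otimes u)=\widehat{F}(\kappa)\otimes g(u)=\kappa\otimes u$ for $\kappa\in K'$ and $u\in V(B)$; and its field-sort restriction is a restriction of $\widehat{F}$, hence $\LL^{\dagger}$-elementary.

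Combining the two paragraphs finishes the argument. The hard part is the construction of $\widehat{F}$: extending the field-sort part of $g$ across $K'$ while fixing $K'$ pointwise is exactly where one must use $K(A_{0})\equiv_{K'}K(A_{1})$ rather than the weaker $K(A_{0})\equiv_{K(B)}K(A_{1})$ already implied by $A_{0}\equiv_{B}A_{1}$; once $\widehat{F}$ is available, the rest is extension-of-scalars bookkeeping drawn from Lemma~\ref{extension of scalars}. (Alternatively, in place of $(\star)$ one can finish by enumerating $\langle A_{0},K'\rangle$ and $\langle A_{1},K'\rangle$ compatibly, with $K'$ and the vector part of $\langle B,K'\rangle$ as initial segments, and applying Corollary~\ref{cor:typedecomposition}, whose two conditions are then witnessed respectively by $\widehat{F}$ being $\LL^{\dagger}$-elementary and by $f$ being an $\LL_{K,V,c}$-isomorphism fixing $\langle B,K'\rangle$.)
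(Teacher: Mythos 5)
Your proof is correct and takes essentially the same route as the paper: decompose the elementary map over $B$ into field and vector parts, use $K(A_{0}) \equiv_{K'} K(A_{1})$ to extend the field part to a partial $\LL^{\dagger}$-elementary map fixing $K'$ pointwise, define the map on $\langle A_{i},K'\rangle$ by the tensor formula via Lemma~\ref{extension of scalars}, and conclude elementarity from relative quantifier elimination ($(\star)$/Corollary~\ref{cor:typedecomposition}). Your extra steps (passing to a full $\LL^{\dagger}$-automorphism $\widehat{F}$ of $K(\mathbb{M})$ rather than just a partial elementary map on $K(A_{0})K'$, and spelling out the isomorphism check) are harmless elaborations of the paper's argument.
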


\begin{proof}
    Let $f : A_{0} \to A_{1}$ be a partial elementary isomorphism fixing $B$ pointwise. So we may write $f = (f_{K},f_{V})$ where $f_{K} : K(A_{0}) \to K(A_{1})$ fixes $K(B)$ and $f_{V}: V(A_0) \to V(A_1)$ fixes $V(B)$. Since $K(A_{0}) \equiv_{K'} K(A_{1})$, we know $f_{K}$ extends to a partial elementary map $g: K(A_{0})K' \to K(A_{1})K'$ which fixes $K'$ pointwise. Note that by Lemma \ref{extension of scalars}, $\langle A_{i},K'\rangle = ((K(A_{i})K'),K(A_{i})K' \otimes_{K(A_{i})} V(A_{i}))$ for $i = 0,1$. We can define an isomorphism $h : \langle A_{0},K' \rangle \to \langle A_{1},K' \rangle$ with $h = (h_{K},h_{V})$ where $h_{K} = g$ and $h_{V} : (K(A_{0})K') \otimes_{K(A_{0})} V(A_{0}) \to (K(A_{1})K') \otimes_{K(A_{1})} V(A_{1})$ by setting 
    $$
    h(\alpha \otimes v) = g(\alpha) \otimes f_{V}(v)
    $$
    for all $\alpha \in K(A_{0})K'$ and $v \in V(A)$ and extending linearly. Since $h_{K} = g$ is partial elementary in the field sort, this isomorphism is partial elementary.  Since $h$ fixes $B$ (as it extends $f$) and fixes $K'$ (by the choice of $g$), the map $h$ witnesses $A_{0} \equiv_{BK'} A_{1}$. 
\end{proof}

We make one additional observation about back-and-forth arguments between LLAs. The following is true for any theory $T^{\dagger}$ of fields.  Although when the field is infinite, $T^{+}$ will not be $\aleph_{0}$-categorical, it is nonetheless $\aleph_{0}$-categorical `relative to the field' in the following sense:

\begin{obs}
Suppose $M = (K,V), M' = (K',V') \models T^{+}$ are countable models and $K \cong K'$.  Then $M \cong M'$. 
\end{obs}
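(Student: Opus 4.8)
The plan is to fix an isomorphism $\sigma \colon K \to K'$ and, after replacing $M'$ by its $\sigma$-twist, to assume $K' = K$ and build an isomorphism $M \to M'$ that is the identity on the common field $K$. This will be done by a back-and-forth argument which is essentially the one in the proof of the quantifier elimination lemma, restricted to partial isomorphisms that fix the field pointwise; the point is that for such maps \emph{no saturation of the field is needed}, so the argument applies to the countable and not-necessarily-saturated models $M$, $M'$. Precisely, let $\mathcal{G}$ denote the family of $\LL_{K,V,c}^{+}$-isomorphisms $f \colon A \to A'$ between finitely generated substructures $A \subseteq M$ and $A' \subseteq M'$ with $f|_{K(A)} = \mathrm{id}_{K(A)}$. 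One checks that $\mathcal{G}$ is nonempty — it contains the identity on the substructure generated by $\emptyset$, namely the prime field together with $0_{V}$ — and that $\mathcal{G}$ has the back-and-forth property. Granting this, a standard interleaving argument using the countability of $M$ and $M'$ produces an increasing chain $(f_{n})_{n<\omega}$ in $\mathcal{G}$ whose domains exhaust $M$ and whose ranges exhaust $M'$, so that $\bigcup_{n} f_{n} \colon M \to M'$ is the desired isomorphism.

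The work, then, is in verifying that $\mathcal{G}$ has the back-and-forth property; by symmetry it suffices to do the forth step, so take $f \colon A \to A'$ in $\mathcal{G}$ and $m \in M$. If $m \in K(M)$, then either $m \in K(A)$, in which case there is nothing to do, or we pass to $\langle A, m \rangle \subseteq M$, whose field part is $\langle K(A), m \rangle \subseteq K$ and whose vector part is $V(A)$ with scalars extended to $\langle K(A), m\rangle$ (Lemma \ref{extension of scalars}); we extend $f$ by the identity on the new scalars and by $\alpha \otimes v \mapsto \alpha \otimes f(v)$ on vectors, exactly as in the proof of Lemma \ref{lem:better base}, and the result lies in $\mathcal{G}$. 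If $m \in V(M)$, we follow Cases~2 and~3 of the proof of the quantifier elimination lemma. In Case~2 (with $m$ in the $K$-span of $V(A)$) the coordinates of $m$ relative to a basis of $V(A)$ already lie in $K = K(M')$, so the auxiliary ``Case~1'' invocations collapse to the trivial field step just described, and the matching vector is an explicit $K$-linear combination of $f(V(A))$ inside $V(M')$. In Case~3 (with $m$ outside the span; after the reverse induction that reduces to the ideal case) one first adjoins the finitely many vectors $[m, v_{i}]$ by Case~2, reads off the structure constants $\overline{\alpha} \in K$ of the subalgebra spanned by the old basis $\overline{v}$ together with $m$, and then applies axiom scheme~(3) of $T^{+}$, which holds in $M'$, to obtain a matching vector $b \in V(M')$; because $f$ fixes the field and is an isomorphism of substructures, $f(\overline{\alpha}) = \overline{\alpha}$ is indeed a tuple of structure constants for an LLA extending $f(\overline{v})$, so the axiom applies. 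In every case the resulting extension visibly lies in $\mathcal{G}$, and the back step is symmetric, using that $M \models T^{+}$.

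The heart of the matter — and really the only content beyond bookkeeping — is the observation that none of these steps uses $\aleph_{0}$-saturation. In the original quantifier elimination proof, saturation of the field was invoked exactly once, in Case~1, to extend an elementary map of fields by a new field element; here that role is played by matching a field element with itself, which is always possible since the two fields coincide. Consequently, all the existential demands fall on the vector sort, where they are met by the first-order axiom schemes~(2) and~(3) of $T^{+}$, which hold in every model of $T^{+}$ regardless of saturation. I expect the only genuinely delicate point to be checking that Cases~2 and~3 of the quantifier elimination back-and-forth really do go through verbatim for non-saturated models once the field is held fixed; modulo that, this is exactly the ``$\aleph_{0}$-categoricity relative to the field'' advertised earlier, and it works for an arbitrary theory $T^{\dagger}$ of fields.
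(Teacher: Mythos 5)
Your proposal is correct and is exactly the ``easy back-and-forth'' the paper has in mind: after identifying $K$ with $K'$, one runs the quantifier-elimination back-and-forth with partial isomorphisms fixing the field pointwise, noting that saturation was only ever needed for the field step, while the vector-sort extensions are supplied by axiom schemes (2) and (3) of $T^{+}$, which hold in every model. No further comment is needed.
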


\begin{proof}
An easy back-and-forth. 
\end{proof}

Our quantifier elimination also gives us a description of algebraic closure:

\begin{lem} \label{lm:algclos}
    Let $M \models T^{+}$. A substructure $A \subseteq M$ is algebraically closed if and only if $K(A) = \mathrm{acl}_{L^{\dagger}}(K(A))$.  Hence, for an arbitrary subset $X \subseteq M$, we have
    $$
    \mathrm{acl}(X) = \langle \mathrm{acl}_{L^{\dagger}}(\langle X \rangle),X \rangle. 
    $$
\end{lem}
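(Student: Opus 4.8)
The plan is to reduce the computation of $\mathrm{acl}$ to the field sort, via the relative quantifier elimination $(\star)$ and its recorded consequence. I work in a monster model $\mathbb{M}\models T^{+}$ with $A$ (and the element under consideration) small, treating the field and vector parts separately. The forward implication is immediate: any $L^{\dagger}$-formula over $K(A)$ witnessing $L^{\dagger}$-algebraicity of a scalar over $K(A)$ is a fortiori an $\LL^{+}$-formula over $A$, so $\mathrm{acl}_{L^{\dagger}}(K(A))\subseteq \mathrm{acl}(A)\cap K(\mathbb{M})$; if $A=\mathrm{acl}(A)$, intersecting with $K$ gives $K(A)=\mathrm{acl}_{L^{\dagger}}(K(A))$.

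For the converse, assume $K(A)=\mathrm{acl}_{L^{\dagger}}(K(A))$. First I would identify the field part of $\mathrm{acl}(A)$: if $\alpha,\beta\in K(\mathbb{M})$ have the same $L^{\dagger}$-type over $K(A)$, the induced field embedding $K(A)(\alpha)\to K(A)(\beta)$ fixing $K(A)$ is $L^{\dagger}$-elementary, and by Lemma \ref{extension of scalars} it lifts to an $\LL_{K,V,c}$-isomorphism $\langle A\alpha\rangle\to\langle A\beta\rangle$ over $A$; since moreover $K(\langle A\alpha\rangle)\equiv_{K(A)}K(\langle A\beta\rangle)$, the consequence of $(\star)$ gives $\alpha\equiv_{A}\beta$. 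Hence a scalar lies in $\mathrm{acl}(A)$ iff it lies in $\mathrm{acl}_{L^{\dagger}}(K(A))=K(A)$, i.e. $K(\mathrm{acl}(A))=K(A)$. It then remains to show $V(\mathrm{acl}(A))\subseteq V(A)$.

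Let $c\in V(\mathbb{M})\cap\mathrm{acl}(A)$. Since $\langle Ac\rangle\subseteq\mathrm{acl}(A)$, the above gives $K(\langle Ac\rangle)=K(A)$, so $\langle Ac\rangle=(K(A),V')$ for a $K(A)$-sub-LLA $V'\supseteq V(A)$ — adjoining $c$ introduces no new scalars. Suppose toward a contradiction that $c\notin V(A)$. Iterating the free amalgam of nilpotent LLAs over $K(A)$ from \cite{d2023model}, form for each $n$ the $n$-fold free amalgam $S_{n}$ of $V'$ with itself over $V(A)$, with embeddings $\kappa_{1},\dots,\kappa_{n}:V'\to S_{n}$ fixing $V(A)$; by strongness the $\kappa_{i}(c)$ are pairwise distinct. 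Using the saturation of $\mathbb{M}$ — crucially, since $c$ contributes no scalars, no enlargement of $K(\mathbb{M})$ is required — the inclusion $V'\hookrightarrow V(\mathbb{M})$ extends to a $K(A)$-LLA embedding $\iota:S_{n}\to V(\mathbb{M})$ fixing $V'$. Then $c_{i}:=\iota(\kappa_{i}(c))$ are pairwise distinct, $c_{1}=c$, and each $\iota\circ\kappa_{i}$ induces an $\LL_{K,V,c}$-isomorphism $\langle Ac\rangle\to\langle Ac_{i}\rangle$ over $A$ with identity field restriction, so $c_{i}\equiv_{A}c$ by $(\star)$. As $n$ was arbitrary, $\mathrm{tp}(c/A)$ is non-algebraic, a contradiction; thus $c\in V(A)$ and $\mathrm{acl}(A)=A$. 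For the displayed identity, put $B:=\langle\mathrm{acl}_{L^{\dagger}}(\langle X\rangle),X\rangle$: by Lemma \ref{extension of scalars} its field part equals $\mathrm{acl}_{L^{\dagger}}(K(\langle X\rangle))$, which is $L^{\dagger}$-algebraically closed, so $B$ is algebraically closed by the first part and $\mathrm{acl}(X)\subseteq B$; conversely $\mathrm{acl}_{L^{\dagger}}(K(\langle X\rangle))\subseteq\mathrm{acl}(X)$ and $X\subseteq\mathrm{acl}(X)$, whence $B\subseteq\mathrm{acl}(X)$.

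The main obstacle is the vector case — producing infinitely many $A$-conjugates of a new vector $c$. The combinatorial engine is free amalgamation of nilpotent LLAs, but two points need care. First, reconciling the base field $K(A)$ of the amalgam with the ambient field $K(\mathbb{M})$: this is handled by first establishing $K(\mathrm{acl}(A))=K(A)$, so that $\langle Ac\rangle$ has field part exactly $K(A)$ and every isomorphism in sight can be taken to fix $K(A)$ pointwise. Second, realizing the amalgam $S_{n}$ inside $\mathbb{M}$ over the small base $V'$: one realizes the corresponding type in a model $N\succeq\mathbb{M}$ obtained by freely amalgamating $V(\mathbb{M})$ with the scalar-extension of $S_{n}$ over the $K(\mathbb{M})$-span of $V'$ and then closing up to a model of $T^{+}$ with the same field $K(\mathbb{M})$ (using the construction behind the lemma that models of $T_{0}^{+}$ extend to models of $T^{+}$, together with relative quantifier elimination to see $\mathbb{M}\preceq N$), and then appeals to saturation of $\mathbb{M}$. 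The field part and the closing identity are then formal consequences of $(\star)$.
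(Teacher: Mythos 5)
Your proof is correct in substance, but it is organized differently from the paper's. The paper does not split into a scalar case and a vector case: given $A$ with $K(A)=\mathrm{acl}_{L^{\dagger}}(K(A))$ and an arbitrary substructure $B\supseteq A$, it builds infinitely many $A$-conjugates $B_i'$ of $B$ with $B_i'\cap B_j'=A$ in one stroke, by first choosing a $K(A)$-indiscernible, $\ind^{a}$-independent (in $T^{\dagger}$) sequence of conjugates of $K(B)$, then extending scalars of everything to the field generated by all these conjugate fields and applying full existence of $\ind^{\otimes}$ (Fact \ref{fact:basicpropertiesoffreeindependence}) over that big field, with relative QE converting the LLA-isomorphisms into automorphisms of $\mathbb{M}$. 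You instead first use relative QE and stable embeddedness of the field sort to show $K(\mathrm{acl}(A))=K(A)$, and then handle algebraic vectors over the fixed field $K(A)$ by iterated free amalgamation of $\langle Ac\rangle$ with itself, realizing the amalgam in $\mathbb{M}$ via an auxiliary extension $N\models T^{+}$ with $K(N)=K(\mathbb{M})$ plus saturation. What your route buys is that no extension of the base field is ever needed (the "no new scalars" reduction), and the conjugates of a new vector are produced by a very concrete finite amalgam; what the paper's route buys is that it avoids the two bookkeeping points your sketch leaves implicit: (i) that $\langle Ac_i\rangle$ really has field part $K(A)$ and is $\LL_{K,V,c}$-isomorphic to $\langle Ac\rangle$ over $A$ — this needs the observation that, because $\langle Ac\rangle$ is closed under the coordinate functions with $K(\langle Ac\rangle)=K(A)$, $K(A)$-linear independence inside $V'$ and inside its amalgam copies persists over $K(\mathbb{M})$, so no new scalars arise from the $\pi_{n,i}$; and (ii) that $\mathbb{M}\preceq N$, which requires running the $(\star)$ back-and-forth in $\aleph_0$-saturated elementary extensions of $\mathbb{M}$ and $N$ (or simply quoting full existence of $\ind^{\otimes}$ inside $\mathbb{M}$, as the paper does, instead of building $N$ at all). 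Both points go through, so these are matters of detail rather than gaps; your argument for the forward direction and for the displayed formula $\mathrm{acl}(X)=\langle\mathrm{acl}_{L^{\dagger}}(\langle X\rangle),X\rangle$ is the expected one, which the paper leaves implicit.
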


\begin{proof}
    We may replace $M$ with a very saturated elementary extension $\mathbb{M}$. Suppose $A$ is a substructure of $\mathbb{M}$ with $\mathrm{acl}_{L^{\dagger}}(K(A)) = K(A)$ and let $B$ be a substructure with $A \subseteq B \subseteq \mathbb{M}$.  Let $(K_{i})_{i < \omega}$ be a $K(A)$-indiscernible seqeunce in $K(\mathbb{M})$ with $K_{0} = K(B)$ and $K_{i} \ind^{a}_{K(A)} K_{<i}$ for all $i < \omega$, where $\ind^{a}$ is computed in $T^{\dagger}$. Note that, by quantifier-elimination, we have that $(K_{i})_{i < \omega}$ is $A$-indiscernible in $\mathbb{M}$. Let $B_{0} = B$ and choose, for each $i > 0$, some $B_{i}$ with $K(B_{0}) B_{0} \equiv_{A} K_{i} B_{i}$. In particular, we have $K(B_{i}) = K_{i}$ for all $i$.  

    Let $\tilde{K} = K(\langle B_{i} : i < \omega \rangle)$. Let $\tilde{A}$ and $\tilde{B}_{i}$ denote the respective extensions of scalars of $A$ and $B_{i}$ to $\tilde{K}$\textemdash that is, $\tilde{A} = \langle \tilde{K}, A\rangle$ and $\tilde{B}_{i} = \langle \tilde{K}, B_{i} \rangle$ for all $i < \omega$. Then, inductively, we define a sequence $(\tilde{B}'_{i})_{i < \omega}$ by setting $\tilde{B}'_{0} = \tilde{B}_{0}$ and, given $\tilde{B}_{0}, \ldots, \tilde{B}_{i}$, we choose $\tilde{B}_{i+1}' \equiv_{\tilde{A}} \tilde{B}_{i+1}$ with $\tilde{B}'_{i+1} \ind^{\otimes}_{\tilde{A}} \tilde{B}'_{\leq i}$ as LLAs over $\tilde{K}$. Choose, for each $i > 0$, some $\sigma_{i} \in \mathrm{Aut}(\mathbb{M}/\tilde{A})$ such that $\sigma(\tilde{B}_{i}) = \tilde{B}'_{i}$ and define $B'_{i} = \sigma(B_{i})$.  Since each $\sigma_{i}$ fixes $\tilde{A}$, it fixes $\tilde{K}$ and thus fixes $K(B_{i})$.  It follows that $K(B'_{i}) = K(B_{i})$.  We also have, by free amalgamation, that $V(B_{i})$ is linearly independent from $V(B'_{\leq i})$ over $V(A)$.  By quantifier elimination, we have $B'_{i} \equiv_{A} B$ and, by construction, $B'_{i} \cap B'_{j} = A$ for all $i \neq j$. This shows that $A$ is algebraically closed.  
\end{proof}


\subsection{The asymptotic theory of the Fra\"iss\'e limit of nil-$c$ LLAs over $\mathbb{F}_{p}$}

Recall we write $\mathbf{L}_{c,p}$ to denote the Fra\"iss\'e limit of $c$-nilpotent Lazard Lie algebras over $\mathbb{F}_{p}$, which was shown to exist in \cite[Theorem 4.37]{d2023model}.  The Lie algebra $\mathbf{L}_{c,p}$ may be naturally viewed as an $L_{K,V,c}$-structure, in which $\mathbf{L}_{c,p}$ is the interpretation of $V$ and $K$ is interpreted as the field $\mathbb{F}_{p}$. Call this structure $M_{p}$. We will show, then, that ultraproducts yield models of the the theory of generic $c$-nilpotent Lie algebras over pseudo-finite fields.  Letting $T^{\dagger}$ be the theory of pseudo-finite fields (in $\LL_{\mathrm{rings}}$), then for the associated theory $T^{+}$ of generic $c$-nilpotent Lie algebras over a pseudo-finite field, we have the following:

\begin{thm}
    If $\mathcal{D}$ is a non-principal ultrafilter on the set of primes, then 
    $$
    \prod_{p} M_{p}/\mathcal{D} \models T^{+}. 
    $$
\end{thm}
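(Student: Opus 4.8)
The plan is to separate $T^{+}$ into its field-independent part --- the axioms of $T_{0}$, the axiom schemes (2) and (3) from the definition of $T$, and the assertion that $V$ is infinite-dimensional --- and the assertion $K \models \Psf$; to verify the former in every $M_{p}$ and transfer it using {\L}o\'{s}'s theorem; and to obtain the latter from Ax's theorem on ultraproducts of finite fields.

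First I would record the routine points: each $M_{p}$ is a model of $T_{0}$ with $K(M_{p}) = \mathbb{F}_{p}$ and $V(M_{p}) = \mathbf{L}_{c,p}$, with $\theta_{n}$ and $\pi_{n,i}$ interpreted as linear independence and coordinate functions as required by $T_{0}$. Because $\mathbb{F}_{p}$ is contained in every substructure, the $\LL_{K,V,c}$-substructures of $M_{p}$ are exactly the sub-LLAs of $\mathbf{L}_{c,p}$ over $\mathbb{F}_{p}$, and the finitely generated ones are precisely the finite $c$-nilpotent Lazard Lie algebras over $\mathbb{F}_{p}$, i.e.\ the members of the Fra\"iss\'e class whose limit is $\mathbf{L}_{c,p}$. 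Also $V(M_{p})$ is infinite-dimensional, and infinite-dimensionality is expressed by the scheme of sentences $\exists v_{1} \cdots \exists v_{n}\, \theta_{n}(v_{1}, \ldots, v_{n})$, each true in $M_{p}$.

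The heart of the matter is that $M_{p}$ satisfies schemes (2) and (3), and here the defining extension property of the Fra\"iss\'e limit does all the work. For scheme (2) with parameter $n$: a tuple $\overline{\alpha} \in \mathbb{F}_{p}^{n^{3}}$ satisfying $\varphi_{\mathrm{str},n}$ is, by the definition of that formula, the sequence of structure constants of some $n$-dimensional $c$-nilpotent Lazard Lie algebra $L$ over $\mathbb{F}_{p}$; as $L$ lies in the Fra\"iss\'e class it embeds into $\mathbf{L}_{c,p}$, and the images of its basis form a tuple $\overline{v}$ with $\varphi_{\mathrm{Lie},n}(\overline{v})$ and $\mathrm{Str}_{n}(\overline{v}) = \overline{\alpha}$. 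For scheme (3) with parameters $n<m$: if $\overline{v} \in V(M_{p})^{n}$ and $\overline{\alpha} \in \mathbb{F}_{p}^{(n+m)^{3}}$ satisfy $\varphi_{\mathrm{str},n+m}(\overline{\alpha}) \wedge \varphi_{\mathrm{Lie},n}(\overline{v}) \wedge \mathrm{Str}_{n}(\overline{v}) \subseteq \overline{\alpha}$, then $\overline{v}$ is the basis of a finite-dimensional subalgebra $L_{0} \subseteq \mathbf{L}_{c,p}$ and $\overline{\alpha}$ encodes an $(n+m)$-dimensional member $L$ of the Fra\"iss\'e class in which the subalgebra spanned by the first $n$ basis vectors is a copy of $L_{0}$; the extension property of $\mathbf{L}_{c,p}$ then extends the inclusion $L_{0} \hookrightarrow \mathbf{L}_{c,p}$ to an embedding $L \hookrightarrow \mathbf{L}_{c,p}$, whose values on the remaining $m$ basis vectors give the $\overline{w}$ required, with $\mathrm{Str}_{n+m}(\overline{v}, \overline{w}) = \overline{\alpha}$.

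To conclude, I would invoke {\L}o\'{s}'s theorem: each instance of scheme (2) (one per $n$) and of scheme (3) (one per pair $n<m$), like each axiom of $T_{0}$ and each sentence of the infinite-dimensionality scheme, is a single $\LL_{K,V,c}$-sentence holding in every $M_{p}$, hence holding in $\prod_{p} M_{p}/\mathcal{D}$. The field sort of $\prod_{p} M_{p}/\mathcal{D}$ is $\prod_{p} \mathbb{F}_{p}/\mathcal{D}$, which is infinite (as $\mathcal{D}$ is non-principal) and an ultraproduct of finite fields, hence pseudo-finite by Ax's theorem; thus $K(\prod_{p} M_{p}/\mathcal{D}) \models \Psf$. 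Altogether $\prod_{p} M_{p}/\mathcal{D}$ satisfies $T_{0}$, schemes (2) and (3), infinite-dimensionality, and $K \models \Psf$, which is precisely $T^{+}$. I expect the one step that needs care to be the verification of scheme (3) in $M_{p}$: one must confirm that the first-order hypothesis $\varphi_{\mathrm{str},n+m}(\overline{\alpha}) \wedge \varphi_{\mathrm{Lie},n}(\overline{v}) \wedge \mathrm{Str}_{n}(\overline{v}) \subseteq \overline{\alpha}$ really is equivalent to the combinatorial data of a Fra\"iss\'e-class extension of the substructure spanned by $\overline{v}$, so that the extension property applies verbatim; the applications of {\L}o\'{s}'s theorem and of Ax's theorem are entirely standard.
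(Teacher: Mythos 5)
Your proposal is correct and takes essentially the same route as the paper: both arguments rest on the Fra\"iss\'e-theoretic description of $\mathrm{Th}(\mathbf{L}_{c,p})$ (richness of the age for scheme (2), the extension property for scheme (3)), an application of {\L}o\'{s}'s theorem, and pseudo-finiteness of $\prod_{p}\mathbb{F}_{p}/\mathcal{D}$, the only cosmetic difference being that you verify each axiom of schemes (2) and (3) as a single sentence in every $M_{p}$ and then transfer it, whereas the paper fixes parameters in the ultraproduct, decomposes them coordinatewise, and reassembles the witnesses via {\L}o\'{s}. The point you flag for scheme (3) --- that the hypothesis $\varphi_{\mathrm{str},n+m}(\overline{\alpha}) \wedge \varphi_{\mathrm{Lie},n}(\overline{v}) \wedge \mathrm{Str}_{n}(\overline{v}) \subseteq \overline{\alpha}$ must be read as giving a genuine LLA-extension (compatible with the Lazard filtration) of $\langle \overline{v}\rangle$ before the extension property of $\mathbf{L}_{c,p}$ applies --- is precisely the same subtlety the paper passes over with ``entirely similar,'' so it is not a gap relative to the paper's own proof.
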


\begin{proof}
Let $\tilde{M}$ denote the ultraproduct $\prod M_{p}/\mathcal{D}$.  We have that $\mathbf{L}_{c,p}$ and $M_{p}$ are two ways of encoding the same Lie algebra (though $M_{p}$ has as its underlying set the disjoint union of the Lie algebra $\mathbf{L}_{c,p}$ and the field $\mathbb{F}_{p}$).  For each $k\in \N$, and each $k$-generated LLA $A$ over $\mathbb{F}_{p}$, there is a formula $\varphi_{A}(x_{1}, \ldots, x_{k})$ such that, for any arbitrary $N_{p}$, $N_{p} \models \varphi(a_{1}, \ldots, a_{k})$ if and only if $\langle a_{1}, \ldots, a_{k} \rangle \cong A$.  

Because $\mathbf{L}_{c,p}$ is the Fra\"iss\'e limit of the class of all finite $c$-nilpotent LLAs over $\mathbb{F}_{p}$, the theory $\mathrm{Th}(\mathbf{L}_{c,p})$ is axiomatized by saying the following:
\begin{enumerate}
 \item For each finite $c$-nilpotent LLA $A$ over $\mathbb{F}_{p}$ generated by a basis of $k$ elements, there is an axiom $(\exists x_{1}, \ldots, x_{k})\varphi_{A}(x_{1}, \ldots, x_{k})$.  This expresses that $N_{p}$ has as its age all finite $c$-nilpotent LLAs over $\mathbb{F}_{p}$. 
 \item For each containment $A \subsetneq B$ of finite LLAs over $\mathbb{F}_{p}$, with $A$ $k$-dimensional and $B$ $l$-dimensional, there is an axiom
 $$
 (\forall x_{1}, \ldots, x_{k})(\exists y_{k+1}, \ldots, y_{l})[\varphi_{A}(\overline{x}) \to \varphi_{B}(\overline{x}, \overline{y})].
 $$
\end{enumerate}
 These axioms express that $\mathbf{L}_{c,p}$ satisfies the embedding property and hence is homogeneous.  Now we just need to check that $\tilde{M}$ satisfies the axiom scheme (1)-(3) at the beginning of Subsection \ref{axiom subsection}.  Axiom scheme (1) is clear since $T^{\dagger}$ is the theory of pseudo-finite fields and each $\mathbf{L}_{p,c}$ is clearly infinite dimensional.  Axiom (2) and (3) follow by a standard application of {\L}os's theorem.  As (3) is entirely similar, we will only spell out the details for (2).  
 
 Let $\tilde{K} = K(\tilde{M}) = \prod \mathbb{F}_{p}/\mathcal{D}$. Suppose $\overline{\alpha} = (\alpha_{1},\ldots, \alpha_{n^{3}}) \in \tilde{K}$ and we have 
 $$
 \tilde{M} \models \varphi_{\mathrm{str},n}(\overline{\alpha}).
 $$
 Write $\overline{\alpha} = (\alpha_{1,p}, \ldots, \alpha_{n^{3},p})_{p}/\mathcal{D}$.  For each prime $p$, if $M_p\models \varphi_{\mathrm{str},n}(\alpha_{1,p}, \ldots, \alpha_{n^{3},p})$, we will let $A_{p} \subseteq \mathbf{L}_{p,c}$ be a Lie algebra over $\mathbf{F}_{p}$ with structure constants $(\alpha_{1,p}, \ldots, \alpha_{n^{3},p})$, which exists by the axiom scheme (1) above. Then we can choose $\overline{v}$ from $A_{p}$ such that $(\alpha_{1,p}, \ldots, \alpha_{n^{3},p})$ are structure constants for $A_{p}$ with respect to the basis $v_{1,p}, \ldots, v_{n,p}$.  If $(\alpha_{1,p}, \ldots, \alpha_{n^{3},p})$ are not structure constants, then we set $A_{p} = 0$ (and likewise each $v_{i,p} = 0$).  By {\L}os's theorem, $A_{p} \neq 0$ for all but a $\mathcal{D}$-small set of primes. Setting $(v_{1},\ldots, v_{n}) = (v_{1,p},\ldots, v_{n,p})_{p}/\mathcal{D}$, we have, again by {\L}os's theorem, that 
 $$
 \tilde{M} \models \varphi_{\mathrm{Lie},n}(v_{1},\ldots, v_{n}) \wedge \mathrm{Str}_{n}(\overline{v}) = \overline{\alpha}. 
 $$
 As $\overline{\alpha}$ was an arbitrary $n^{3}$-tuple from $\tilde{K}$, we can conclude that $\tilde{M}$ satisfies axiom scheme (2), completing the proof. 
\end{proof}

\begin{rem}
    Pseudo-finite fields do not eliminate quantifiers in $\LL_{\mathrm{rings}}$ but they do in a reasonable extension of that language.  We could let $\LL^{\dagger} \supseteq \LL_{\mathrm{rings}}$ be the language of rings together with an $(n+1)$-ary relation $\mathrm{Sol}_{n}(x_{0}, \ldots, x_{n})$.  Any finite or pseudo-finite field has a natural expansion to $L^{\dagger}$ in which each relation $\mathrm{Sol}_{n}(x_{0}, \ldots, x_{n})$ is interpreted so that the following holds:
$$
\mathrm{Sol}_{n}(x_{0},\ldots, x_{n}) \leftrightarrow (\exists y)\left(\sum_{i=0}^{n} x_{i}y^{i} = 0\right).
$$
In this language, the theory of pseudo-finite fields eliminates quantifiers (see \cite[Theorem 4.2]{chatzidakis2018notes}). By Corollary \ref{cor:typedecomposition}, $\prod M_{p}/\mathcal{D}$ eliminates quantifiers, when each $M_{p}$ is endowed with an $\LL^{\dagger}$-structure on the field sort. 
\end{rem}

\section{Neostability}
\subsection{NIP$_c$}
First we will show that $T$ is $c$-dependent, deducing this from the Chernikov-Hempel Composition Lemma, as in \cite[Theorem 5.20]{d2023model}: 

\begin{fact} \label{composition lemma} \cite{ChernikovHempel3}
Let $M$ be an $\LL'$-structure such that its reduct to a language $\LL \subseteq \LL'$ is NIP.  Let $d,k$ be natural numbers and $\varphi(x_{1}, \ldots, x_{d})$ be an $\LL$-formula. Let further $y_{0}, \ldots, y_{k}$ be arbitrary $(k+1)$-tuples of variables.  For each $1 \leq t \leq d$, let 
$0 \leq i_{t,1}, \ldots, i_{t,k} \leq k$ be arbitrary and let $f_{t} : M_{y_{i_{t,1}}} \times \ldots \times M_{y_{i_{t,k}}} \to M_{x_{t}}$ be an arbitrary $k$-ary function.  Then the formula 
$$
\psi(y_{0}; y_{1}, \ldots, y_{k}) = \varphi(f_{1}(y_{i_{1,1}}, \ldots, y_{i_{1,k}}), \ldots, f_{d}(y_{i_{d,1}}, \ldots, y_{i_{d,k}}))
$$
is $k$-dependent. 
\end{fact}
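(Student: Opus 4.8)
The plan is to prove the contrapositive, via the VC-theoretic characterisation of $k$-dependence. Recall that $\psi(y_0;y_1,\dots,y_k)$ fails to be $k$-dependent exactly when it has the $k$-independence property, and — by compactness — this happens if and only if for every $n$ there is a choice of parameters $(c^l_j : 1\le l\le k,\ j<n)$ such that every subset of $[n]^k$ occurs as a trace
\[
S_b = \{(j_1,\dots,j_k)\in[n]^k : \models\psi(b;c^1_{j_1},\dots,c^k_{j_k})\}
\]
for some $b$. Since the reduct of $M$ to $\LL$ is NIP, every $\LL$-formula — in particular $\varphi$ together with any partition of its variables $x_1,\dots,x_d$ into an object block and a parameter block — has finite VC dimension; I want to show this is incompatible with the shattering in the previous sentence, which forces $\psi$ to be $k$-dependent.

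The only combinatorial ingredient is a pigeonhole: each $f_t$ has exactly $k$ argument slots while there are $k+1$ variable blocks $y_0,\dots,y_k$, so there is an index $m_t\in\{0,\dots,k\}$ such that $f_t$ reads nothing from block $y_{m_t}$. Fix one such $m_t$ for every $t$ and partition $\{1,\dots,d\}=D_0\sqcup D_1\sqcup\dots\sqcup D_k$ by the value of $m_t$. Substituting $b$ for $y_0$ and $c^l_{j_l}$ for $y_l$, the composite $b^t:=f_t(\dots)$ then has the following shape: if $t\in D_0$ it does not involve $b$, so it is a fixed function of $\bar\jmath=(j_1,\dots,j_k)$ independent of the object; and if $t\in D_m$ with $m\ge1$ it does not involve $j_m$, so it factors through the coordinate projection $g_m:[n]^k\to[n]^{k-1}$ that forgets the $m$-th entry. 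Grouping the composites by the blocks $D_0,\dots,D_k$, we get $\models\psi(b;c^1_{j_1},\dots,c^k_{j_k})\iff\models\varphi(\zeta^0_{\bar\jmath},\zeta^1_{b,\bar\jmath},\dots,\zeta^k_{b,\bar\jmath})$, where $\zeta^0$ is a fixed ``background'' tuple (the $D_0$-composites, depending on the array but not on $b$) and, for $m\ge1$, $\zeta^m_{b,\bar\jmath}=a^m_{g_m(\bar\jmath)}$ for some family $(a^m_i)_{i\in[n]^{k-1}}$ of tuples depending on $b$.

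Next I would bound $|\{S_b\cap[n]^k:b\}|$ by a Sauer--Shelah estimate carried out one direction at a time. Every trace $S_b$ occurs among the sets $\{\bar\jmath : \models\varphi(\zeta^0_{\bar\jmath},a^1_{g_1(\bar\jmath)},\dots,a^k_{g_k(\bar\jmath)})\}$ obtained by holding $\zeta^0$ fixed and letting the families $(a^m_i)_{m,i}$ range freely, so it suffices to bound the number of such sets. Fixing all the families but one, say the family for direction $m$, and slicing $[n]^k$ into the $n^{k-1}$ fibres of $g_m$ (each of size $n$): on a single fibre, membership of $\bar\jmath$ in the set is determined by whether one fixed tuple $a^m_i$ lies in a set definable by $\varphi$ with the $D_m$-coordinates as object and the rest as parameters, and finiteness of the VC dimension $V$ of that formula bounds the number of patterns on the fibre by $O(n^{V})$; multiplying over the $n^{k-1}$ fibres and then over the $k$ directions yields $|\{S_b\cap[n]^k:b\}|\le 2^{O(n^{k-1}\log n)}$. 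For $n$ large this is far smaller than $2^{n^k}$, so $\psi$ cannot shatter $[n]^k$; hence $\psi$ is $k$-dependent.

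The step I expect to be the crux — and the reason the statement is more than a formality — is precisely the interaction flagged in the introduction: the $f_t$ are \emph{arbitrary} functions, not $\LL$-terms, so they transport neither definability nor indiscernibility, and a single composite can depend on the object variable together with several of the direction variables at once, ruling out any naive ``assign each $\varphi$-variable to one block'' reduction. The pigeonhole is what makes the count go through: it routes each composite either into a fixed, object-free background or into a single direction that it ``respects'' via $g_m$, which is exactly what the per-direction Sauer--Shelah slicing needs. Organising this bookkeeping cleanly — and extracting the quantitative NIP$_k$ bound from finiteness of the VC dimension of $\varphi$ — is the technical heart, and here I would follow the argument of Chernikov and Hempel in \cite{ChernikovHempel3}.
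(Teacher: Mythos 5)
First, a point of reference: the paper does not prove this statement. It is imported verbatim as a Fact from \cite{ChernikovHempel3}, and the authors explicitly remark immediately afterwards that this reference is not yet publicly available, so that their $c$-dependence results are conditional on it. There is therefore no in-paper proof to compare yours against, and I can only assess your proposal on its own terms.

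Your structural reduction is exactly right and isolates the reason the statement is true: each $f_t$ has $k$ argument slots drawn from $k+1$ blocks, hence omits some block $y_{m_t}$, so after substituting the array the composites split into a $b$-free part ($D_0$) and, for each direction $m\ge 1$, a part factoring through the projection $g_m$ that forgets the $m$-th grid coordinate. The per-fibre Sauer--Shelah estimate is also fine: for a \emph{fixed} choice of the families in the other directions, varying $(a^m_i)_i$ alone produces at most $2^{O(n^{k-1}\log n)}$ traces. The genuine gap is the final step, ``multiplying \ldots over the $k$ directions.'' You have a map $F$ from tuples of families $(a^1,\dots,a^k)$ to traces and a bound $N_m$ on the image of $F$ restricted to each line in direction $m$, but per-line image bounds do not multiply to bound the total image: the $(n+1)\times n$ matrix whose row $0$ is identically $0$ and whose row $i\ge 1$ equals $i$ in column $i$ and $0$ elsewhere has at most two distinct values in every row and every column, yet $n+1$ distinct values overall. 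In your situation nothing prevents each new choice of $(a^2,\dots,a^k)$ from contributing a fresh batch of up to $N_1$ traces, and there are unboundedly many such choices, so no bound below $2^{n^k}$ follows. The root of the problem is that your $O(n^{V})$ count of patterns on a single fibre is a count of $\varphi$-traces over a \emph{fixed} $n$-element parameter set; as the other families vary, that parameter set varies with them, and then every subset of the fibre is realizable (take $\varphi(x;y)$ to be $x=y$ to see this), so the fibrewise bound gives no global control. This is precisely why the known proofs of composition lemmas of this type do not proceed by raw counting: one uses the characterization of $k$-dependence via mutually indiscernible arrays (as in Chernikov--Palacin--Takeuchi), extracts a mutually indiscernible witness of IP$_k$, and applies NIP of $\varphi$ one direction at a time, with indiscernibility---rather than a frozen choice of the other families---supplying the uniformity needed to combine the directions. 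Your closing appeal to ``follow the argument of Chernikov and Hempel'' for the bookkeeping defers exactly the step at which the proposal breaks.
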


We will start by giving a description of terms in this theory, along the lines of \cite[Lemma 5.17]{d2023model} there, though there are new complications due to the extra structure. We recall that $\LL^{-}_{V,K,c}$ is the language $\LL_{V,K,c}$ with the Lie bracket symbol removed. 

\begin{lemma} \label{term description 2}
    Suppose $\overline{x}$ is a tuple of field variables, $\overline{y}$ is a tuple of vector space variables, and $t(\overline{x},\overline{y})$ is a term of $\LL_{K,V,c}$. 
    \begin{enumerate}
        \item If $t$ is valued in the vector space sort, then 
        $$
        t(\overline{x},\overline{y}) = \sum_{i = 1}^{n} t_{i}(\overline{x},\overline{y}) \cdot m_{i}(\overline{y})
        $$
        where each $t_{i}(\overline{x},\overline{y})$ is a term valued in the field sort, and each $m_{i}(\overline{y})$ is a Lie monomial whose variables come from $\overline{y}$.  
        \item If $t$ is valued in the field sort, then $t(\overline{x},\overline{y}) = s(\overline{x},\overline{m}(\overline{y}))$, where $s$ is a term of $\LL_{K,V,c}^{-}$ and $\overline{m}(\overline{y})$ is a tuple of Lie monomials in the variables $\overline{y}$. 
    \end{enumerate}
\end{lemma}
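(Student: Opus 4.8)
The plan is to prove both statements simultaneously by induction on the structure of the term $t$, since a term valued in the vector sort may have subterms valued in the field sort and vice versa (through the coordinate functions $\pi_{n,i}$ and scalar multiplication). First I would set up the induction: the base cases are variables and constants. A field variable $x_i$ or a field constant ($0_K$, $1_K$) is of the form required by (2) with $s = x_i$ (no Lie monomials needed); a vector variable $y_j$ is of the form (1) with a single summand, the field term being $1_K$ and the Lie monomial being $y_j$ itself; the constant $0_V$ is the empty sum.

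For the inductive step I would go through the function symbols of $\LL_{K,V,c}$ one at a time. For the ring operations $+_K, -_K, \times_K$ applied to field-valued terms, statement (2) is preserved trivially: if $t = s_1(\overline x, \overline m_1(\overline y))$ and $t' = s_2(\overline x, \overline m_2(\overline y))$, then $t \times_K t'$ is $s(\overline x, \overline m(\overline y))$ where $\overline m$ concatenates $\overline m_1, \overline m_2$ and $s$ is the appropriate $\LL^-_{K,V,c}$-term. For $+_V, -_V$ applied to vector-valued terms, statement (1) is preserved by combining the two sums (after possibly multiplying through by $-1$, which is a field term). For scalar multiplication $\cdot : K \times V \to V$, if $\beta(\overline x, \overline y)$ is field-valued and $t = \sum_i t_i(\overline x,\overline y)\cdot m_i(\overline y)$ is in the form (1), then $\beta \cdot t = \sum_i (\beta \times_K t_i)\cdot m_i$, again of the form (1) since $\beta \times_K t_i$ is field-valued. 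The Lie bracket $[\cdot,\cdot]_V$ applied to two vector-valued terms in form (1) is handled by bilinearity: $[\sum_i t_i m_i, \sum_j t'_j m'_j] = \sum_{i,j} (t_i \times_K t'_j)\cdot [m_i, m'_j]$, and each $[m_i, m'_j]$ is again a Lie monomial in the variables of $\overline y$ (here one uses that $c$-nilpotency makes brackets of length $> c$ vanish, so only finitely many monomials occur, but in any case the relevant point is just that $[m_i,m'_j]$ is a Lie monomial). The remaining, and most delicate, case is the coordinate function $\pi_{n,i} : V^{n+1} \to K$: given vector-valued terms $u_1, \ldots, u_{n+1}$ each in the form (1), we need $\pi_{n,i}(u_1,\ldots,u_{n+1})$ to be expressible as $s(\overline x, \overline m(\overline y))$ for an $\LL^-_{K,V,c}$-term $s$.

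I expect this last step to be the main obstacle. The difficulty is that $\pi_{n,i}$ is built into $\LL_{K,V,c}$ but not into $\LL^-_{K,V,c}$ — wait, actually $\pi_{n,i}$ and $\theta_n$ *are* in $\LL^-_{K,V,c}$ (only the bracket is removed). So the point is rather this: once we write each $u_k = \sum_\ell (u_k)_\ell \cdot m_\ell(\overline y)$ with a common list of Lie monomials $m_1, \ldots, m_N$, the value $\pi_{n,i}(u_1, \ldots, u_{n+1})$ depends only on the field coefficients $(u_k)_\ell$ together with the coordinate data $\pi_{N,j}(m_1,\ldots,m_N, \ldots)$ measuring linear dependencies *among the monomials* $m_\ell$ — but the latter are themselves outputs of $\pi$ applied to tuples of Lie monomials, hence of the allowed form $\overline m(\overline y)$ fed into an $\LL^-$-term. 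One needs an auxiliary lemma, provable purely in $T_0$ (or even in linear algebra), expressing $\pi_{n,i}(\sum_\ell a_{1\ell} m_\ell, \ldots, \sum_\ell a_{(n+1)\ell} m_\ell)$ as a fixed $\LL^-_{K,V,c}$-term in the scalars $a_{k\ell}$ and in the finitely many quantities $\pi_{M,\cdot}(\overline m, w)$ for $w$ ranging over the $m_\ell$ (which record exactly which linear relations hold among the $m_\ell$). With that auxiliary fact in hand, statement (2) is preserved under $\pi_{n,i}$, closing the induction. I would state and prove that auxiliary linear-algebra fact first as a sublemma, then assemble the main induction as above.
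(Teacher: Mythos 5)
Your overall induction (simultaneous for (1) and (2), base cases, closure under the field operations, vector operations, scalar multiplication, and the bracket via bilinearity) matches the paper's proof. The gap is in the coordinate-function case, precisely in the auxiliary sublemma you propose to rest it on: that sublemma is false as stated. The quantities $\pi_{N,j}(m_{1},\ldots,m_{N},m_{\ell})$ do \emph{not} record the linear relations among the monomials, because the axioms of $T_{0}$ force the coordinate functions to output $0$ whenever their first $N$ arguments are linearly dependent. Concretely, take $N=2$, $m_{1}=y_{1}$, $m_{2}=y_{2}$, $u_{1}=m_{1}$, $u_{2}=m_{2}$, and compare the evaluations $y_{2}=5y_{1}$ and $y_{2}=7y_{1}$ (with $y_{1}\neq 0$): the coefficients $a_{k\ell}$ and all quantities $\pi_{2,j}(m_{1},m_{2},m_{\ell})$ agree in the two cases (the latter are all $0$), yet $\pi_{1,1}(u_{1},u_{2})$ equals $5$ in one and $7$ in the other. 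So no function of your proposed data, let alone a term, computes the value. Moreover, even on the locus where the monomials are independent, an $\LL_{K,V,c}^{-}$-term all of whose arguments are of field sort is essentially an integer polynomial, and the coordinate of one linear combination with respect to others is not a polynomial function of the coefficients (it involves divisions and case distinctions).

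The repair is the observation you already half-made and then abandoned: $\pi_{n,i}$, scalar multiplication, and the vector-space operations all belong to $\LL_{K,V,c}^{-}$, and statement (2) allows the Lie monomials themselves to appear as vector-sorted arguments of $s$; nothing requires you to eliminate the vector sort from the term. So, writing each argument as $u_{k}=\sum_{j} s_{k,j}(\overline{x},\overline{m}_{k,j}(\overline{y}))\, m'_{k,j}(\overline{y})$ (which your induction provides), define the $\LL_{K,V,c}^{-}$-term
$$
s(\overline{x},\overline{z},\overline{w}) \;=\; \pi_{n,i_{*}}\Bigl(\sum_{j} s_{1,j}(\overline{x},\overline{z}_{1,j})\,w_{1,j},\;\ldots,\;\sum_{j} s_{n+1,j}(\overline{x},\overline{z}_{n+1,j})\,w_{n+1,j}\Bigr)
$$
with fresh vector variables $\overline{w}$, and substitute the Lie monomials $\overline{m}_{k,j}(\overline{y})$ for $\overline{z}$ and $m'_{k,j}(\overline{y})$ for $\overline{w}$. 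This is exactly how the paper closes the coordinate-function case, and with this one-line replacement of your sublemma the rest of your argument goes through as written.
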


\begin{proof}
(1)  It is clear that the statement is true for variables and constants.  It is also clear that the class of terms of this form is closed under scalar multiplication by a field valued term $s(\overline{x},\overline{y})$, vector addition and subtraction, and by application of the Lie bracket.  Therefore, the conclusion follows by induction on terms.

(2) We will argue by induction on terms using (1).  The conclusion is obvious for constants and variables.  The class of terms satisfying the description in (2) is also clearly closed under the field operations.  The only non-trivial case, then, is to check that a coordinate function applied to vector space sort valued terms can again be written in this form.  Fix natural numbers $i_{*} \leq n$. So assume $t_{1}(\overline{x},\overline{y}), \ldots, t_{n+1}(\overline{x},\overline{y})$ are vector space sort valued terms and 
$$
t(\overline{x},\overline{y}) = \pi_{n,i_{*}}(t_{1}(\overline{x},\overline{y}), \ldots, t_{n+1}(\overline{x},\overline{y}))
$$
is a term of minimal complexity for which the conclusion has not yet been established. By (1), for each $1 \leq i \leq n+1$, we have
$$
t_{i}(\overline{x},\overline{y}) = \sum_{j = 1}^{k_{i}} t_{i,j}(\overline{x},\overline{y})m_{i,j}'(\overline{y}),
$$
where each $t_{i,j}$ is a term valued in the field sort and $m'_{i,j}(\overline{y})$ is a Lie monomial. By induction, then, each term $t_{i,j}(\overline{x},\overline{y}) = s_{i,j}(\overline{x}, \overline{m}_{i,j}(\overline{y}))$, where $s_{i,j}(\overline{x},\overline{z}_{i,j})$ is a term of $\LL_{K,V,c}^{-}$ and $\overline{m}_{i,j}(\overline{y})$ is a tuple of Lie monomials.  Therefore, we have
$$
t(\overline{x},\overline{y}) = \pi_{n,i_{*}}\left( \sum_{j = 1}^{k_{1}} s_{1,j}(\overline{x},\overline{m}_{1,j}(\overline{y}))m'_{1,j}(\overline{y}), \ldots, \sum_{j = 1}^{k_{n+1}} s_{n+1,j} (\overline{x},\overline{m}_{n+1,j}(\overline{y}))m'_{n+1,j}(\overline{y})\right).
$$
Consider the $\LL_{K,V,c}^{-}$ term $s(\overline{x},\overline{z},\overline{w})$ defined by 
$$
s(\overline{x},\overline{z},\overline{w}) = \pi_{n,i_{*}}\left( \sum_{j = 1}^{k_{1}} s_{1,j}(\overline{x},\overline{z}_{1,j})w_{i,j}, \ldots, \sum_{j = 1}^{k_{n+1}} s_{n+1,j} (\overline{x},\overline{z}_{n+1,j})w_{i,j}\right),
$$
where $\overline{z}$ is a tuple concatenating the tuples $\overline{z}_{i,j}$ and $\overline{w}$ is a tuple enumerating the $w_{i,j}$. Then if $\overline{m}(\overline{y})$ concatenates the tuples $\overline{m}_{i,j}(\overline{y})$ and $\overline{m}'(\overline{y})$ enumerates the $m'_{i,j}(\overline{y})$, then we obtain 
$$
t(\overline{x},\overline{y}) = s(\overline{x},\overline{m}(\overline{y}),\overline{m}'(\overline{y})),
$$
which has the desired form.  This completes the induction. 
\end{proof}

\begin{lem} \label{reduct is stable}
    The reduct of $T$ to the language $\LL^{-}_{K,V,c}$ is stable.
\end{lem}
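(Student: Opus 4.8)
The plan is to combine a relative quantifier elimination for the reduct with a type count against the field sort. Write $T^{-}$ for the completion of $T \restriction \LL^{-}_{K,V,c}$ of a fixed characteristic. A model of $T^{-}$ is a structure $(K,V)$ in which $K \models \ACF$ and $P_{1}(V) = V \supseteq P_{2}(V) \supseteq \cdots \supseteq P_{c+1}(V) = 0$ is a flag of subspaces each of whose successive quotients $P_{i}(V)/P_{i+1}(V)$ is infinite-dimensional (this last scheme holds in every model of $T$ — apply the structure-constant axioms to abelian configurations — hence in $T^{-}$), together with the $\emptyset$-definable coordinate functions $\pi_{n,i}$ and linear-independence relations $\theta_{n}$. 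The first step is to check that $T^{-}$ has quantifier elimination relative to the field sort, i.e. that the family of $\LL^{-}_{K,V,c}$-isomorphisms between substructures of a monster model of $T^{-}$ whose restriction to the field sort is $\ACF$-elementary has the back-and-forth property. This is obtained by rerunning the argument behind $(\star)$ (equivalently, the back-and-forth in the proof of the quantifier-elimination lemma for $T$): the case of adjoining a field element goes through via $\aleph_{0}$-saturation of $\ACF$, the case of a vector in the span of the existing substructure goes through verbatim, and the third case is now immediate — since there is no bracket, a vector $a$ outside the span of $V(M_{0})$ can simply be matched with any vector $b$ of $V(N)$ lying in $P_{\lev(a)}(V(N)) \setminus P_{\lev(a)+1}(V(N))$ and outside the span of the image of $V(M_{0})$, which exists because that quotient is infinite-dimensional. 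In particular $T^{-}$ is complete modulo characteristic, and the $\LL^{-}_{K,V,c}$-type of a tuple over a substructure is determined by its quantifier-free $\LL^{-}_{K,V,c}$-type together with the $\ACF$-type of its field part over the field part of the substructure.

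Now I count types. Fix $N \models T^{-}$ with $|N| = \lambda$ infinite and let $\bar a = (\bar \alpha, \bar v)$ enumerate a realization of some $p \in S_{n}(N)$, with $\bar\alpha$ in the field sort and $\bar v$ in the vector sort. Let $B = \langle N, \bar a \rangle$; it is finitely generated over $N$, so its field part is a finitely generated extension $K(B) = K(N)(\bar\alpha, \bar\gamma)$, where $\bar\gamma$ is the finite tuple of coordinates expressing the non-free components of $\bar v$ in terms of a $K(N)$-basis of $V(N)$ together with a maximal subtuple of $\bar v$ linearly independent over $V(N)$ (all other $\pi$-values are rational functions of $\bar\gamma$ over $K(N)(\bar\alpha)$, by Lemma~\ref{extension of scalars} and the fact that coordinates of a vector are determined by coordinates). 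By the relative quantifier elimination, $\tp(\bar a/N)$ is determined by (i) the isomorphism type of $B$ over $N$ as an $\LL^{-}_{K,V,c}$-structure, which amounts to the linear configuration of $\bar v$ over $V(N)$ — finitely many vectors of $V(N)$, finitely much data in $K(B)$, and the flag levels of the $v_{i}$ and of their linear combinations — so at most $\lambda$ possibilities; and (ii) the $\ACF$-type of the finite tuple $\bar\alpha\bar\gamma$ over $K(N)$, of which there are at most $\lambda$ since $\ACF$ is $\omega$-stable. Hence $|S_{n}(N)| \leq \lambda$, so $T^{-}$ — equivalently the reduct of $T$ to $\LL^{-}_{K,V,c}$ — is stable.

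The main work is the bookkeeping in the second step: verifying that, modulo the relative quantifier elimination, the $\LL^{-}_{K,V,c}$-type of $(\bar\alpha,\bar v)$ over $N$ really is pinned down by a finite linear-algebraic configuration over $V(N)$ together with the $\ACF$-type of a single finite field tuple — in particular that adjoining $\bar v$ enlarges the field only by the finitely many coordinates $\bar\gamma$. A more structural alternative is to observe that, up to definitional expansion, a model of $T^{-}$ is a finite amalgam over $\ACF$ of copies of the two-sorted infinite-dimensional vector space over an algebraically closed field studied by Granger \cite{granger1999stability} and Dobrowolski \cite{dobrowolskisets}, each of which is stable, and that this construction preserves stability; but this requires importing a preservation result, whereas the type count above is self-contained.
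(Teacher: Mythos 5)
Your argument takes a genuinely different route from the paper's. The paper identifies the reduct with the theory of an infinite-dimensional vector space over an algebraically closed field carrying a flag with infinite successive quotients, notes that a model of size $\aleph_1$ of a completion is determined by the transcendence degree of the field and the dimensions of the quotients $P_i/P_{i+1}$, so there are only countably many models of size $\aleph_1$, and concludes $\omega$-stability from the classical few-models-in-$\aleph_1$ criterion. You instead prove quantifier elimination for the reduct relative to the field sort and then count types over a model. Your route is more self-contained (no appeal to the models-in-$\aleph_1$ theorem) and yields an explicit description of types, at the cost of redoing the back-and-forth without the bracket and the coordinate-function bookkeeping you acknowledge; the paper's proof is shorter but opaque about what types look like.

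There is, however, a concrete error in your Case 3, and since the relative quantifier elimination drives your whole proof it needs repair. Matching $a$ with an arbitrary $b \in P_{\lev(a)}(V(N)) \setminus P_{\lev(a)+1}(V(N))$ outside the span of the image does not preserve quantifier-free formulas of the form $P_j(y -_V u)$ with $u$ in the span of $V(M_0)$: the flag interacts with cosets of that span. Concretely, take $c = 2$, let $u \in V(M_0)$ have level $1$, let $a$ have level $1$ with $a - w \notin P_2$ for every $w$ in the span of $V(M_0)$, and choose $b = f(u) + p$ with $p \in P_2(V(N))$ outside the span of the image; then $b$ has level $1$ and lies outside that span, yet $P_2(b - f(u))$ holds while $P_2(a-u)$ fails, so $a \mapsto b$ is not a partial isomorphism. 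The fix is the analogue of the coset normalization implicit in the paper's Case 3: let $j^*$ be maximal such that $\bigl(a + \Span(V(M_0))\bigr) \cap P_{j^*} \neq \emptyset$, adjoin (via your Cases 1 and 2) the finitely many scalars needed to write $w$ in the span with $a - w \in P_{j^*}$, replace $a$ by $a - w$, and then choose $b \in P_{j^*}(V(N)) \setminus \bigl(P_{j^*+1}(V(N)) + \Span_{K(N)}(f(V(M_0')))\bigr)$, which exists because $P_{j^*}/P_{j^*+1}$ is infinite-dimensional while $V(M_0')$ is finite-dimensional; with this choice all levels of combinations $\beta a' + u$ are forced, and the extension is a partial isomorphism. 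The same flag-versus-span interaction is exactly what your step-two count must record (your phrase ``flag levels of the $v_i$ and of their linear combinations''): the relative flag on the finite-dimensional span of $\bar v$ and the finitely many vectors of $V(N)$ involved is finite data given by finitely many parameters from $N$ and a finite field tuple, so the bound $|S_n(N)| \leq \lambda$ does go through once Case 3 is repaired.
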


\begin{proof}
 The reduct of $T$ to the language $\LL^{-}_{K,V,c}$ is the theory of an infinite dimensional vector space over an algebraically closed field, with a distinguished flag of length $c$ such that each quotient $P_{i}(M)/P_{i+1}(M)$ is infinite-dimensional. It is complete after specifying a characteristic of the field and a model $M$ of size $\aleph_{1}$ of a completion is determined by specifying the transcendence degree of the field over the prime subfield, and the dimension of each quotient $P_{i}(M)/P_{i+1}(M)$.  There are countably many choices for the transcendence degree, since this can be finite, $\aleph_{0}$, or $\aleph_{1}$, and there are two choices, $\aleph_{0}$ or $\aleph_{1}$, for each quotient. It follows that there are only $\aleph_{0}$ many models of size $\aleph_{1}$.  This implies that every completion of the reduct of $T$ to the language $\LL^{-}_{K,V,c}$ is $\omega$-stable.  
\end{proof}

\begin{thm}
    The theory $T$ is $c$-dependent and $(c-1)$-independent.  
\end{thm}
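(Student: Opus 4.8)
The plan is to establish the two halves separately. For $c$-dependence I will combine quantifier elimination, the term analysis of Lemma~\ref{term description 2}, the stability of the $\LL_{K,V,c}^{-}$-reduct (Lemma~\ref{reduct is stable}), and the Chernikov--Hempel Composition Lemma (Fact~\ref{composition lemma}), following the template of \cite[Theorem~5.20]{d2023model}. For $(c-1)$-independence I will exhibit a single formula built from an iterated Lie bracket and realize arbitrary $(c-1)$-ary patterns using the existence axioms of $T$ together with free nilpotent Lie algebras.

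\emph{$c$-dependence.} By quantifier elimination, and since being $c$-dependent with respect to a fixed partition of the variables is preserved under Boolean combinations, it suffices to show that an arbitrary atomic formula $\varphi(w;u_{1},\dots,u_{c})$ is $c$-dependent, where $w,u_{1},\dots,u_{c}$ is any partition of its free variables into $c+1$ blocks (each a tuple possibly mixing the two sorts). Every atomic $\LL_{K,V,c}$-formula is an equality, or a relation $\theta_{n}$ or $P_{i}$ of $\LL_{K,V,c}^{-}$, applied to $\LL_{K,V,c}$-terms. Applying Lemma~\ref{term description 2} and discarding the Lie monomials of degree $>c$, which are identically zero in $T$ by $c$-nilpotency, we may rewrite $\varphi$ modulo $T$ as $\chi(\overline{\xi};\overline{m}(\overline{\eta}))$, where $\chi$ is an $\LL_{K,V,c}^{-}$-formula, $\overline{\xi}$ enumerates the field variables occurring among $w,u_{1},\dots,u_{c}$, and $\overline{m}(\overline{\eta})$ is a tuple of Lie monomials of degree $\le c$ in the vector variables $\overline{\eta}$ occurring among $w,u_{1},\dots,u_{c}$. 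Now invoke Fact~\ref{composition lemma} with $\LL=\LL_{K,V,c}^{-}$ (NIP by Lemma~\ref{reduct is stable}), $k=c$, the tuples $y_{0},\dots,y_{c}$ equal to $w,u_{1},\dots,u_{c}$, and inner formula $\chi$: each field variable of $\overline{\xi}$ is filled by the projection onto its coordinate, a function of the single block it belongs to; each monomial slot is filled by evaluation of the corresponding degree-$\le c$ Lie monomial, which, having at most $c$ leaves, involves vector variables from at most $c$ of the $c+1$ blocks $w,u_{1},\dots,u_{c}$. In both cases the filling function depends on at most $c$ of the $c+1$ tuples, so Fact~\ref{composition lemma} shows $\chi(\overline{\xi};\overline{m}(\overline{\eta}))$, hence $\varphi$, is $c$-dependent. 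As $\varphi$ and the partition were arbitrary, $T$ is $c$-dependent.

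\emph{$(c-1)$-independence.} Let $\varphi(y_{0};y_{1},\dots,y_{c-1})$ be the formula $[y_{1},[y_{2},\dots,[y_{c-1},y_{0}]\cdots]]\neq 0_{V}$; I claim $\varphi$ has the $(c-1)$-independence property. Fix $n$, let $F$ be the free $c$-nilpotent Lie algebra over the prime field of $K$ on generators $\{b^{j}_{i}:1\le j\le c-1,\ i<n\}\cup\{a_{s}:s\subseteq n^{c-1}\}$, equipped with the lower central series as Lazard series, and for $s\subseteq n^{c-1}$ and $\overline{i}=(i_{1},\dots,i_{c-1})\in n^{c-1}$ set $\beta_{\overline{i},s}:=[b^{1}_{i_{1}},[b^{2}_{i_{2}},\dots,[b^{c-1}_{i_{c-1}},a_{s}]\cdots]]\in F_{c}$. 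These are right-normed brackets of $c$ distinct generators, hence nonzero, and for distinct $(\overline{i},s)$ they are homogeneous of distinct multidegree in the grading of $F$ by its generators, hence linearly independent in $F_{c}$. Let $W\le F_{c}$ be the span of $\{\beta_{\overline{i},s}:s\subseteq n^{c-1},\ \overline{i}\notin s\}$; since $F_{c}$ is central, $W$ is an ideal, and in $A:=F/W$ the image of $\beta_{\overline{i},s}$ is nonzero if and only if $\overline{i}\in s$. Then $A$ is a finite-dimensional $c$-nilpotent Lazard Lie algebra whose structure constants lie in the prime field, so by axiom scheme~(2) of $T$, every $\mathbb{M}\models T$ contains a Lie subalgebra isomorphic to $A$; the images of the $b^{j}_{i}$ and $a_{s}$ then witness $\mathbb{M}\models\varphi(a_{s};b^{1}_{i_{1}},\dots,b^{c-1}_{i_{c-1}})\iff\overline{i}\in s$ for all $\overline{i}\in n^{c-1}$. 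Letting $n$ vary: for each finite $F_{0}\subseteq\omega^{c-1}$ the existence of a suitable realization $a_{s}$ for every $s\subseteq F_{0}$ is a first-order property of the finitely many involved $b^{j}_{i}$, so by compactness there are sequences $(b^{j}_{i}:i<\omega)_{1\le j\le c-1}$ in a monster model of $T$ relative to which every $s\subseteq\omega^{c-1}$ is realized; hence $\varphi$ has the $(c-1)$-independence property and $T$ is $(c-1)$-independent.

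I expect the main obstacle to be the $c$-dependence half, specifically checking that every atomic formula really does present in the form $\chi(\overline{\xi};\overline{m}(\overline{\eta}))$: the coordinate functions $\pi_{n,i}$ feed vector inputs into field-valued terms and vice versa, so unwinding a nested atomic formula into an $\LL_{K,V,c}^{-}$-formula whose only nonlinear arguments are Lie monomials in the vector variables requires the more careful induction on terms of Lemma~\ref{term description 2} than is needed in the purely group-theoretic setting of \cite{d2023model}. The two supporting facts — that nonzero Lie monomials have at most $c$ leaves by $c$-nilpotency, and that the right-normed brackets $\beta_{\overline{i},s}$ are linearly independent in the free nilpotent Lie algebra (their images in the enveloping algebra contain the word $b^{1}_{i_{1}}b^{2}_{i_{2}}\cdots b^{c-1}_{i_{c-1}}a_{s}$ with coefficient $\pm 1$, and the multidegrees determine $(\overline{i},s)$) — are routine.
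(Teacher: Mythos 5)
Your proposal is correct and follows essentially the same route as the paper: for $c$-dependence, quantifier elimination plus the term normal form of Lemma \ref{term description 2}, stability of the $\LL^{-}_{K,V,c}$-reduct (Lemma \ref{reduct is stable}), and the Composition Lemma (Fact \ref{composition lemma}), noting that Lie monomials are at most $c$-ary by nilpotency. For $(c-1)$-independence the paper simply defers to the argument of \cite[Theorem 5.20]{d2023model} with the field changed; your explicit construction (iterated bracket formula, quotient of the free $c$-nilpotent LLA by a central span of right-normed brackets, realized in models via axiom scheme (2) and compactness) is exactly that argument written out, so there is no substantive difference.
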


\begin{proof}
    The proof that $T$ is $(c-1)$-independent is identical to the argument of \cite[Theorem 5.20]{d2023model}, replacing the free $c$-nilpotent Lie algebra over $\mathbb{F}_{p}$ with the free $c$-nilpotent Lie algebra over any algebraically closed field. Moreover, it follows by Lemma \ref{term description 2} that every formula $\varphi(\overline{x},\overline{y})$ of $\LL_{K,V,c}$, where $\overline{x}$ is a tuple of field sort variables and $\overline{y}$ is a tuple of vector space sort variables, can be written in the form 
    $$
    \psi(\overline{x},\overline{m}(\overline{y}))
    $$
    for an $\LL_{K,V,c}^{-}$-formula $\psi(\overline{x},\overline{z})$, where $\overline{m}(\overline{y})$ is a tuple of Lie monomials in the variables $\overline{y}$. By $c$-nilpotence, every Lie monomial is at most $c$-ary.  By Lemma \ref{reduct is stable}, the formula $\psi(\overline{x},\overline{z})$ is stable. Therefore, by Fact \ref{composition lemma}, we conclude that $T$ is $c$-dependent. 
\end{proof}

\begin{rem}
    For the case of $c = 2$, the proof actually gives that $T$ is NFOP$_{2}$ by \cite[Theorem 2.16]{abdaldaim2023higher}.  
\end{rem}

\subsection{Generalities in NSOP$_1$ theories} In this subsection we fix a monster model $\M$ of a fixed theory $T$. In general, $a,b,c,\ldots$ are (possibly infinite) tuples from $\M$;  $A,B,C,\ldots$ are small subsets of $\M$ and $M,N,\ldots$ are small elementary substructures of $\M$.

\begin{defn}
We write $a\indi u_E b$ if $\tp(a/bE)$ is finitely satisfiable in $E$. We write $a\indi h_E b$ if $b\indi u_E a$.
\end{defn}

\begin{defn}
    For any ternary relation $\ind$ over small subsets of $\M$ we call a sequence $(a_i)_{i<\omega}$ an \textit{$\ind$-Morley sequence over $C$} if it is $C$-indiscernible and $a_i\ind_C a_{<i}$ for all $i<\omega$.
\end{defn}

\begin{fact}\label{fact:indiscernibleisheirovermodel}
    Suppose $(a_i)_{i<\omega}$ is an indiscernible sequence in $\M$. Then there is a model $M$ such that $(a_i)_{i<\omega}$ is $\indi h$-independent over $M$.
\end{fact}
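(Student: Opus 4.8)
Unwinding the definition of $\indi h$ (and of an $\indi h$-Morley sequence), the statement asks for a model $M$ such that for every $i<\omega$ the type $\tp(a_{<i}/Ma_i)$ is finitely satisfiable in $M$ — equivalently $a_{<i}\indi u_M a_i$, i.e.\ $a_i\indi h_M a_{<i}$. The plan is to glue a fresh copy of the sequence to the \emph{left} of $(a_i)_{i<\omega}$ and take $M$ to be generated by that copy, so that any finite parameter from $M$ depends on only finitely many of the new elements and can therefore be ``stepped over'' by the original elements $a_0,\dots,a_{i-1}$ using indiscernibility.

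Concretely: first pass to a Skolemization $T^{\mathrm{Sk}}\supseteq T$ with $|T^{\mathrm{Sk}}|=|T|$ (so that models are exactly Skolem hulls of their generating sets), and after possibly applying an automorphism of the underlying $T$-monster assume $(a_i)_{i<\omega}$ is $T^{\mathrm{Sk}}$-indiscernible. Since the EM-type of an indiscernible sequence over an arbitrary linear order is consistent, a standard compactness argument produces a $T^{\mathrm{Sk}}$-indiscernible sequence $(c_j)_{j<\omega}\frown(a_i)_{i<\omega}$ indexed by $\omega+\omega$ whose second block is the original sequence. Put $C=\{c_j:j<\omega\}$ and let $M$ be the $T^{\mathrm{Sk}}$-Skolem hull of $C$; then $M\prec\M$, $|M|=|T|$, and every tuple of $M$ has the form $\bar t(c_{j_1},\dots,c_{j_k})$ for some Skolem terms $\bar t$ and $j_1<\dots<j_k<\omega$. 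Now fix $n<\omega$ and a formula $\varphi(\bar x;\bar m,a_n)$ with $\bar m=\bar t(c_{j_1},\dots,c_{j_k})\in M$ and $\M\models\varphi(a_{<n};\bar m,a_n)$. Choosing $j_k<\ell_1<\dots<\ell_n$, the tuples $(c_{j_1},\dots,c_{j_k},c_{\ell_1},\dots,c_{\ell_n},a_n)$ and $(c_{j_1},\dots,c_{j_k},a_0,\dots,a_{n-1},a_n)$ are both increasing in the single indiscernible sequence $(c_j)_j\frown(a_i)_i$, hence realize the same type over $\emptyset$; applying this to $\varphi$ read off through $\bar t$ yields $\M\models\varphi(c_{\ell_1},\dots,c_{\ell_n};\bar m,a_n)$ with $(c_{\ell_1},\dots,c_{\ell_n})\in M$. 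So $\tp(a_{<n}/Ma_n)$ is finitely satisfiable in $M$; as $n$ was arbitrary, $(a_i)_{i<\omega}$ is an $\indi h$-Morley sequence over $M$, and the same computation shows it is $M$-indiscernible.

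The only substantive point is the construction of $M$: one needs a model containing the left copy in which every finite parameter is controlled by finitely many of the $c_j$'s, and the Skolemization is exactly what makes this painless — without it one would instead build $M$ as the union of a chain of models, absorbing the required ``step-over'' witnesses at each stage. The other thing to watch is the direction: it is the type of the \emph{earlier} elements $a_{<i}$ over $Ma_i$ that must be finitely satisfiable, so the auxiliary copy must be attached on the left (attaching it on the right would instead produce a coheir-Morley sequence).
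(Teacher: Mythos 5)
Your proof is correct, and it takes a genuinely different route from the paper: the paper's ``proof'' of this fact is a one-line appeal to an external result (\cite[Theorem 3.3.7]{delbee2023axiomatic}), whereas you write out the standard self-contained argument that this citation encapsulates --- Skolemize, prepend a copy of the sequence, let $M$ be the Skolem hull of the copy, decompose parameters of $M$ as Skolem terms in finitely many new elements, and step over them by comparing two increasing tuples in the single indiscernible sequence. You also get right the one point where care is genuinely needed: with the paper's convention that $a\indi h_E b$ means $\tp(b/aE)$ is finitely satisfiable in $E$, the condition $a_i\indi h_M a_{<i}$ is exactly finite satisfiability of $\tp(a_{<i}/Ma_i)$ in $M$, so the auxiliary copy must sit to the left (a right-hand copy would give the $\indi u$-Morley property instead), and your increasing-tuple computation is the correct one. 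Two of your ``standard'' steps hide the real compactness work and are worth knowing precisely: making $(a_i)_{i<\omega}$ indiscernible for $T^{\mathrm{Sk}}$ is done by extracting a Skolem-indiscernible sequence with the same EM-type in the original language and transferring by an automorphism of $\M$ (equivalently, conjugating the Skolem functions), and prepending the copy while keeping $(a_i)_{i<\omega}$ fixed is not finite satisfiability inside the original sequence but consistency of the stretched EM-type together with homogeneity and saturation of $\M$. Both are routine, so your argument is complete at the usual level of rigor; its advantage over the paper's treatment is that it is self-contained and makes the left/right (heir/coheir) asymmetry explicit, while the citation buys brevity and places the fact in a general axiomatic framework.
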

\begin{proof}
    Apply \cite[Theorem 3.3.7]{delbee2023axiomatic}. 
\end{proof}

\begin{defn}
Let $M\prec \M$ be a small model. 
\begin{itemize}
    \item A type $p$ over a model $M$ \textit{Kim-divides over $M$} if $p$ implies a formula $\phi(x,b)$ which \textit{Kim-divides over $M$}, that is, there exists an $\indi u$-Morley sequence $(b_i)_{i<\omega}$ in $\tp(b/M)$ such that $\set{\phi(x,b_i)\mid i<\omega}$ is inconsistent.
    \item We say that $a$ and $b$ are \textit{Kim-independent over $M$}, denoted $a\indi K_M b$ if $\tp(a/Mb)$ does not Kim-divide over $M$
\end{itemize}
\end{defn}

Recall that NSOP$_1$ theories can be characterized in terms of Kim-forking in the same way simple theories are characterized in terms of forking. For instance, a theory is NSOP$_1$ if and only if $\indi K$ is symmetric over models, see \cite{KR20} for more about NSOP$_1$ theories.
\begin{remark}
    It is easy to check that if $a\indi u_M b$ then $a\indi K _M b$. Note also that if $a\indi u _C b$ in $\M$, then in any reduct of $\M$ the type of $a$ over $Mb$ is finitely satisfiable in $M$ in the sense of the reduct.
\end{remark}

\begin{fact}[Strengthened independence theorem] \cite[Theorem 2.13]{kruckmanramsey18}.
    Assume that $T$ is NSOP$_1$ and $M\prec \M$, $c_1\indi K_M a$, $c_2\indi K_M b$, $a\indi K_M b$ and $c_1\equiv_M c_2$. Then there exists $c$ such that $c\equiv_{Ma}c_1$, $c\equiv_{Mb} c_2$ and $(a,b,c)$ is an independent triple over $M$ (by which we mean $a\indi K_M bc$, $b\indi K_M ca$ and $c\indi K_M ab$).
\end{fact}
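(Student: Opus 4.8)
The plan is to upgrade the ordinary independence theorem for Kim-independence in NSOP$_1$ theories (see \cite{KR20}), using the extension property for $\indi K$ over models, the chain condition for $\indi K$-Morley sequences, and Kim's lemma for Kim-dividing; all independences below are over $M$.

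\textbf{Step 1 (normalisation).} By the extension property, replace $c_1$ by $c_1'\equiv_{Ma}c_1$ with $c_1'\indi K_M ab$, and $c_2$ by $c_2'\equiv_{Mb}c_2$ with $c_2'\indi K_M ab$. By monotonicity $c_1'\indi K_M b$ and $c_2'\indi K_M a$, and $c_1'\equiv_M c_2'$ still holds; since $\equiv_{Ma}$ and $\equiv_{Mb}$ are transitive, a solution $c$ for $(a,b,c_1',c_2')$ is a solution for $(a,b,c_1,c_2)$. So we may assume $c_1\indi K_M ab$ and $c_2\indi K_M ab$.

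\textbf{Step 2 (plain independence theorem).} Applying the independence theorem to $c_1\indi K_M a$, $c_2\indi K_M b$, $a\indi K_M b$ and $c_1\equiv_M c_2$ gives $c$ with $c\equiv_{Ma}c_1$, $c\equiv_{Mb}c_2$ and $c\indi K_M ab$; by symmetry of $\indi K$ over models this is already the third of the three required independences, and the types of $c$ over $Ma$ and over $Mb$ are correct. It remains to secure $a\indi K_M bc$ and $b\indi K_M ac$.

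\textbf{Step 3 (upgrade to an independent triple).} By the $a\leftrightarrow b$, $c_1\leftrightarrow c_2$ symmetry of the configuration it suffices to obtain $a\indi K_M bc$, and for this I would realise $\tp(a/Mbc)$ along a Morley sequence in $\tp(bc/M)$. Take a $\indi K$-Morley sequence $(b_ic_i)_{i<\omega}$ over $M$ with $b_0c_0=bc$; its restriction $(b_i)_{i<\omega}$ is again $\indi K$-Morley over $M$, so, using $a\indi K_M b$, the chain condition and transitivity properties of Morley sequences allow one to conjugate the sequence so that it becomes indiscernible over $Ma$ with $a\indi K_M (b_i)_{i<\omega}$, while still being a $\indi K$-Morley sequence over $M$ starting with $bc$. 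Combining this with $c\indi K_M ab$ (i.e.\ $\tp(c/Mab)$ does not Kim-fork over $M$) via one further application of the plain independence theorem along the sequence yields a single realisation $a^*$ of $\bigcup_{i<\omega}\tp(a/Mb_ic_i)$. Since $(b_ic_i)_{i<\omega}$ is a $\indi K$-Morley sequence over $M$ with $b_0c_0=bc$, Kim's lemma for Kim-dividing then gives $a\indi K_M bc$; symmetrically $b\indi K_M ac$, completing the independent triple.

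The crux is Step 3: Kim-independence in NSOP$_1$ theories satisfies neither base monotonicity nor full transitivity, so the independent triple cannot be read off formally from the output of the plain independence theorem --- one must genuinely amalgamate $\tp(a/Mbc)$ along a carefully chosen Morley sequence, and arranging the chain-condition conjugation so that it keeps the sequence Morley over $M$ \emph{and} leaves its base point $bc$ fixed is the delicate point (a naive conjugation over $Mb$ does not fix $c$). An alternative is to re-run the Morley-sequence construction behind the proof of the plain independence theorem itself while carrying the two extra independences along throughout.
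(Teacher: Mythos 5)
This result is imported by the paper as a Fact, with a citation to Kruckman--Ramsey; there is no internal proof to compare with, so I assess your argument on its own terms. Steps 1 and 2 are fine (extension for $\indi K$ over models, then the ordinary independence theorem). The fatal problem is Step 3: you attempt to verify $a\indi K_M bc$ \emph{a posteriori} for the amalgam $c$ already produced in Step 2, using only $a\indi K_M b$, $c\indi K_M ab$ and the type conditions. That implication is false in general NSOP$_1$ theories, so no refinement of the conjugation argument can close the gap. Counterexample in the two-sorted generic bilinear form $\beta$ over an algebraically closed field (Granger's theory, the NSOP$_1$ entry of this paper's own table): take a model $M$ and vectors $a,b,c$ linearly independent over $V(M)$ whose form values are mutually generic except for the single identity $\beta(b,c)=\beta(a,a)$. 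Then $a\indi K_M b$, $c\indi K_M a$, $c\indi K_M b$, and even $c\indi K_M ab$: the only nontrivial constraint in $\tp(c/Mab)$ is $\beta(b,x)=\beta(a,a)$, and along a coheir Morley sequence $(a_ib_i)$ with the $b_i$ linearly independent over $V(M)$ the conditions $\beta(b_i,x)=\beta(a_i,a_i)$ are simultaneously realizable, so no formula of the type Kim-divides. Thus $c$ is exactly the kind of output Step 2 can produce (here one may even take $c_1=c_2=c$, which satisfies the hypotheses of the Fact). Yet $a\nindi K_M bc$: the formula $\beta(x,x)=\beta(b,c)$ lies in $\tp(a/Mbc)$ and Kim-divides over $M$, because $\beta(b,c)\notin K(M)$ forces the values $\beta(b_i,c_i)$ along any coheir sequence in $\tp(bc/M)$ to be pairwise distinct, making the instances pairwise inconsistent; equivalently, $\beta(a,a)=\beta(b,c)$ is a common element of $\dcl(Ma)$ and $\dcl(Mbc)$ outside $M$, and $\indi K$ implies algebraic independence over models.

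This also explains why your flagged ``delicate point'' cannot be arranged: producing an $\ind^{K}$- (or coheir) Morley sequence starting exactly at $bc$ which is $Ma$-indiscernible is, by the standard characterization, \emph{equivalent} to $a\indi K_M bc$, so the step is circular --- the chain condition applied to $a\indi K_M b$ only lets you conjugate over $Mb$, fixing $b$ but moving $c$, and fixing $bc$ would require the very independence you want. (The subsequent appeal to ``one further application of the plain independence theorem along the sequence'' is likewise not a proof: for an $Ma$-indiscernible sequence the union $\bigcup_i\tp(a/Mb_ic_i)$ is realized by $a$ itself, and for a non-conjugated sequence it is not the set of conjugated types that Kim-dividing asks about.) The correct route, as in Kruckman--Ramsey's actual proof, is to build a \emph{special} amalgam: fix witnessing Morley sequences arising from the given pairwise independences first (where the chain condition only needs to fix $a$ or $b$ singly) and amalgamate compatibly with them, so that the three independences of the triple hold by construction rather than being verified afterwards for an arbitrary amalgam; compare the proof of Lemma \ref{lem:NSOP1technical} in this paper, which proceeds in exactly that sequence-first spirit. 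Your closing sentence gestures at this alternative, but that is where all the work lies, and it is not carried out.
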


We denote by $\indi f$ the usual forking independence relation. The theory of Kim-independence in general is defined over models because of its intrinsic definition which uses global coheir types which in general only exist over models. However, the whole theory of Kim-independence can be extended over arbitrary sets provided every set is an \textit{extension base for forking} i.e. $a\indi f _C C$ for all $a,C$, see \cite{dobrowolskiKimramsey2022}. In particular, we have the following:
\begin{fact}
    If $T$ is NSOP$_1$ and every small subset of $\M$ is an extension base for forking, then for all $a,b,C$ there exists $a'\equiv_C a$ such that $a'\indi K_C b$.
\end{fact}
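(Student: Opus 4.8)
The plan is to reduce the statement to the corresponding existence property for \emph{ordinary} forking, which is exactly what the extension-base hypothesis provides; the only place the NSOP$_1$ hypothesis is genuinely needed is to make sense of Kim-independence over a set that is not a model, i.e.\ the framework of \cite{dobrowolskiKimramsey2022}. Granting that framework, the argument is short.

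The key observation is the non-model analogue of the standard inclusion $\indi f \subseteq \indi K$: over an extension base $C$, Kim-forking implies forking. Indeed, if $\tp(a/Cb)$ Kim-divides over $C$, then it implies some formula $\phi(x,b)$ that Kim-divides over $C$; but, whatever precise form the definition in \cite{dobrowolskiKimramsey2022} takes, such a formula is witnessed by a $C$-indiscernible sequence in $\tp(b/C)$, so $\phi(x,b)$ divides over $C$ in the ordinary sense, and hence $\tp(a/Cb)$ forks over $C$. Equivalently, $a \indi f_C b$ implies $a \indi K_C b$ for all $a,b$. Now, given $a,b,C$: since $C$ is an extension base, $\tp(a/C)$ does not fork over $C$, so it admits a global non-forking extension; restricting that extension to $Cb$ and realizing it yields some $a' \equiv_C a$ with $\tp(a'/Cb)$ not forking over $C$, that is $a' \indi f_C b$, whence $a' \indi K_C b$ by the observation above. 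This is the desired $a'$.

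I expect the main (and only) obstacle to be importing the foundations of \cite{dobrowolskiKimramsey2022}: that Kim-independence over an arbitrary extension base $C$ is coherently defined, agrees with the model-based notion when $C$ is a model, and that Kim-dividing over $C$ is still detected by $C$-indiscernible Morley-type sequences in the type of the parameter. Once these are in place, the existence statement carries no extra difficulty. If one wished to avoid the reduction to forking altogether, a more hands-on alternative is available: fix one Morley sequence $(b_i)_{i<\omega}$ over $C$ in $\tp(b/C)$ of the relevant kind, apply Fact \ref{fact:indiscernibleisheirovermodel} to obtain a model $M \supseteq C$ over which $(b_i)$ is an invariant Morley sequence, use a global coheir of $\tp(a/M)$ to produce $a' \equiv_M a$ with $a' \indi u_M b$ (so $a' \indi K_M b$, hence $\tp(a'/Cb)$ does not Kim-divide over $M$, hence each of its formulas remains consistent along $(b_i)$), and conclude $a' \indi K_C b$ via Kim's lemma over the extension base $C$; but this route is strictly more elaborate and rests on the same imported machinery.
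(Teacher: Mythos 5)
Your main argument is correct, and it is worth noting that the paper itself does not prove this statement at all: it is quoted as a Fact with a pointer to \cite{dobrowolskiKimramsey2022}, so your proposal supplies a proof where the paper relies on a citation. Your reduction is exactly the standard one underlying that reference: with Kim-dividing over a set $C$ defined via $\indi f$-Morley sequences over $C$, any witnessing sequence is in particular a $C$-indiscernible sequence of realizations of $\tp(b/C)$, so a Kim-dividing formula divides, and hence $a\indi f_C b$ implies $a\indi K_C b$; combined with the classical fact that a type over an extension base $C$ extends to a global type non-forking over $C$, full existence for $\indi K$ follows. Two small remarks that your write-up makes visible and that the black-box citation hides: the argument only uses that the particular base $C$ is an extension base (the blanket hypothesis that \emph{every} set is one is what makes the over-sets theory of $\indi K$ well-behaved globally, e.g.\ Kim's lemma and symmetry, but is not needed for this existence statement), and NSOP$_1$ itself enters only through taking the \cite{dobrowolskiKimramsey2022} definition of Kim-independence over sets at face value, not through the argument. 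Your alternative ``hands-on'' route via a model $M\supseteq C$, coheirs and Kim's lemma over $C$ would also work but, as you say, it rests on strictly more of the imported machinery (Kim's lemma over extension bases), so the first reduction is the right proof to record.
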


Given three fields $C\seq A\cap B$, we denote by $A\indi \ACF_C\ B$ if the transcendence degree of $a$ over $C$ equals the transcendence degree of $a$ over $B$, for all tuples $a$ from $A$.
\begin{fact}\cite[Proposition 9.28]{KR20} \cite{Chatzidakis_1999}.
    Let $T$ be an NSOP$_1$ theory of fields, then $\indi K$ implies $\indi \ACF\quad$.
\end{fact}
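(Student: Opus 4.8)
The plan is to reduce everything to a single transfer principle: a formula that divides over a model in the stable theory $\ACF$ must Kim-divide over that model in $T$. Granting this, the statement follows quickly, since $\indi\ACF$ is precisely non-forking independence in $\ACF$ (algebraic independence over the base): if $a \nindi\ACF_M b$ then some polynomial relation witnesses forking over $M$ in $\ACF$, and the transfer principle moves this witness into $T$ to contradict $a \indi K_M b$.

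I would first establish the transfer principle. Fix a model $M \prec \M$ and a ring-language formula $\psi(x,c)$ which divides over $M$ when computed in a monster model of $\ACF$; I claim $\psi(x,c)$ Kim-divides over $M$ in $T$. Build, as one always can over a model, an $\indi u_M$-Morley sequence $(c_i)_{i<\omega}$ in $\tp(c/M)$ by iterating a global coheir extension of $\tp(c/M)$; such a sequence is $M$-indiscernible and satisfies $c_i \equiv_M c$ and $c_i \indi u_M c_{<i}$ for all $i$. By the remark recorded above (finite satisfiability in $M$ is inherited by reducts), and using quantifier elimination in $\ACF$ together with the embedding of the field $\M$ into its algebraic closure, the sequence $(c_i)_{i<\omega}$ is an $M$-indiscernible sequence in $\tp_{\ACF}(c/M)$ with $c_i$ finitely satisfiable in $M$ over $c_{<i}$. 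Since finite satisfiability over $M$ implies non-forking over $M$, the $c_i$ form a Morley sequence over $M$ in $\ACF$. As $\ACF$ is simple (indeed stable) and $\psi(x,c)$ divides over $M$, Kim's lemma gives that $\{\psi(x,c_i) : i<\omega\}$ is inconsistent. Since $(c_i)_{i<\omega}$ is an $\indi u_M$-Morley sequence in $\tp(c/M)$ computed in $T$, this exhibits Kim-dividing of $\psi(x,c)$ over $M$ in $T$, proving the principle.

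Then I would conclude. Suppose towards a contradiction that $a \indi K_M b$ but $a \nindi\ACF_M b$. Choose a finite tuple $\bar a$ from $Ma$ with $\mathrm{trdeg}(\bar a / Mb) < \mathrm{trdeg}(\bar a / M)$, so that $\tp_{\ACF}(\bar a / Mb)$ forks over $M$; as $\ACF$ is simple, it divides over $M$, hence implies a ring-formula $\psi(x,c)$, with $c$ a tuple from the subfield generated by $Mb$ (clear denominators in the polynomial relation), that divides over $M$ in $\ACF$. By the transfer principle, $\psi(x,c)$ Kim-divides over $M$ in $T$; since $\tp_T(a/Mb)$ implies $\psi(x,c)$, the type $\tp_T(a/Mb)$ Kim-divides over $M$ in $T$, contradicting $a \indi K_M b$. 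Hence $\indi K$ implies $\indi\ACF$.

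The step I expect to be delicate is the verification, inside the transfer principle, that an $\indi u$-Morley sequence of $T$ over $M$ restricts to a genuine Morley sequence of $\ACF$ over $M$. This combines: (i) the passage to the ring reduct and the embedding of $\M$ into its algebraic closure, where quantifier elimination in $\ACF$ must be invoked precisely because $M$ itself need not be algebraically closed; (ii) the general fact that finite satisfiability implies non-forking; and (iii) Kim's lemma in the simple theory $\ACF$, to the effect that a dividing formula is inconsistent along every Morley sequence of its parameter's type. Once these are in place, the remaining forking calculus in $\ACF$ is routine.
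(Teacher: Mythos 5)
The paper does not actually prove this statement: it is quoted as a Fact with citations to \cite{KR20} and \cite{Chatzidakis_1999}, so there is no in-paper proof to compare against. Your argument is, in outline, the standard one behind those citations and it is essentially correct: restrict an $\indi u_M$-Morley sequence of $T$ to the ring reduct, use quantifier elimination in $\ACF$ to see it is an $M$-indiscernible coheir (hence nonforking) sequence in $\ACF$, apply Kim's lemma in the stable theory $\ACF$ to a formula witnessing dividing, and pull the inconsistency back to $T$ to get Kim-dividing, contradicting $a\indi K_M b$.

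Two points deserve more care than you give them. First, your transfer principle is stated for an arbitrary ring-language formula $\psi(x,c)$, but the final step (inconsistency of $\{\psi(x,c_i)\}$ in the $\ACF$ monster implies inconsistency in the $T$-monster) only works when $\psi$ is quantifier-free (or at least existential), since an arbitrary ring formula can change truth value between a model of $T$ and its algebraic closure; this is harmless because by quantifier elimination in $\ACF$ the dividing formula may be replaced by a quantifier-free one with the same parameters, but it should be said. Second, the parenthetical suggestion that the dividing formula is simply ``the polynomial relation with denominators cleared'' is not right as stated: a single polynomial-vanishing formula implied by $\tp_{\ACF}(\bar a/Mb)$ need not divide over $M$ (e.g.\ $x_1(x_1-c)=0$ is implied when $a_1=c$, yet does not divide, since the common solution set along any sequence contains $x_1=0$). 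Your actual logical route---transcendence-degree drop gives forking in $\ACF$, forking equals dividing in a simple theory, so the type implies \emph{some} dividing formula, which one then takes quantifier-free---is the correct one, so this is a slip in the aside rather than in the argument. Finally, the last step should be phrased for the variable of $\tp_T(a/Mb)$: the tuple $\bar a$ lies in the field generated by $Ma$, so one substitutes the rational-function terms over $M$ into $\psi$ and checks (routinely, since the extra parameters lie in $M$ and coheir sequences are preserved under $M$-definable maps) that the substituted formula still Kim-divides over $M$.
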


The following is a technical lemma we will need later.  It is a variant of the `algebraically reasonable independence theorem' of \cite{kruckmanramsey18}.  

\begin{lem} \label{lem:NSOP1technical}
    Suppose $T$ is NSOP$_{1}$, $M \models T$, and we have $A \ind^{K}_{M} B$.  If we are given models $N_{0},N_{1}$ which contain $M$ and satisfy $N_{0} \equiv_{M} N_{1}$, $N_{0} \ind^{K}_{M} A$, and $N_{1} \ind^{K}_{M} B$, then there is some model $N$ such that $N \equiv_{MA} N_{0}$, $N \equiv_{MB} N_{1}$, $N \ind^{K}_{M} AB$ and, additionally, $A \ind^{K}_{N} B$.
\end{lem}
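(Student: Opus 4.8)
The plan is to obtain the first three conclusions directly from the strengthened independence theorem and then to reprove that theorem with extra control in order to also secure $A \ind^{K}_{N_{*}} B$. First I would apply the strengthened independence theorem (the Fact recalled just above) with $a := A$, $b := B$, $c_{1} := N_{0}$, $c_{2} := N_{1}$: the needed hypotheses $N_{0} \ind^{K}_{M} A$, $N_{1} \ind^{K}_{M} B$, $A \ind^{K}_{M} B$ and $N_{0} \equiv_{M} N_{1}$ are exactly what is assumed. This produces $N_{*}$ with $N_{*} \equiv_{MA} N_{0}$, $N_{*} \equiv_{MB} N_{1}$ and with $(A, B, N_{*})$ an independent triple over $M$; since $N_{0}$ enumerates (the universe of) an elementary substructure and $N_{*} \equiv_{M} N_{0}$, the tuple $N_{*}$ again enumerates a model containing $M$. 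The independent triple gives in particular $N_{*} \ind^{K}_{M} AB$, so three of the four required conclusions are already in place.

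What is \emph{not} automatic is $A \ind^{K}_{N_{*}} B$: since $\ind^{K}$ has no base monotonicity, neither $N_{*} \ind^{K}_{M} AB$ nor the independence of the triple forces $A$ and $B$ to remain Kim-independent over the enlarged base $N_{*}$. To get this I would open up the proof of the strengthened independence theorem rather than invoke it as a black box, in the spirit of the algebraically reasonable independence theorem of \cite{kruckmanramsey18} but tracking Kim-independence over the amalgam instead of algebraic closure. Concretely, that proof realizes $\tp(N_{0}/MA) \cup \tp(N_{1}/MB)$ along a coheir ($\ind^{u}$-)Morley sequence $\bar{B} = (B_{i})_{i<\omega}$ in $\tp(B/M)$ with $B_{0} = B$, Kim's lemma for NSOP$_{1}$ guaranteeing the relevant partial type is consistent; the point is to arrange that $\bar{B}$ is indiscernible over $M$ together with the copy $A'$ of $A$ appearing en route, and then to invoke Fact \ref{fact:indiscernibleisheirovermodel} (applied after naming $MA'$) to base $\bar{B}$ on a model, which after transport one takes to be the amalgam $N_{*}$. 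With $N_{*}$ so chosen, $\bar{B}$ is a coheir Morley sequence over $N_{*}$ in $\tp(B/N_{*})$ starting at $B$ and indiscernible over $N_{*}A$, so the realization $A$ witnesses that no formula of $\tp(A/N_{*}B)$ Kim-divides over $N_{*}$; that is, $A \ind^{K}_{N_{*}} B$, while $N_{*} \equiv_{MA} N_{0}$, $N_{*} \equiv_{MB} N_{1}$ and $N_{*} \ind^{K}_{M} AB$ are preserved because they are built into the amalgamation throughout.

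I expect the main obstacle to be exactly this interleaving: because there is no base-change lemma for $\ind^{K}$ to chain after the independence theorem, the coheir Morley sequence witnessing $A \ind^{K}_{N_{*}} B$ and the amalgamation realizing $\tp(N_{0}/MA)$ and $\tp(N_{1}/MB)$ must be produced by a single construction, with care that passing from the indiscernible sequence to a model base via Fact \ref{fact:indiscernibleisheirovermodel} does not disturb the equivalences already arranged and that $A$ and $B$ remain literal. Verifying these compatibilities, rather than any single conceptual step, is where the real work of the proof lies.
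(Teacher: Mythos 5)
Your opening move is fine — the strengthened independence theorem applied to $N_0,N_1,A,B$ over $M$ does give the first three conclusions — and you correctly identify that the entire difficulty is $A \ind^{K}_{N_{*}} B$, for which there is no base-change principle. But the mechanism you propose for that fourth conclusion has a genuine gap at the "transport" step. Fact \ref{fact:indiscernibleisheirovermodel} hands you \emph{some} model over which $\bar B$ is $\ind^{h}$-independent; after naming $MA'$ that model contains $M$ and the copy $A'$, and nothing ties it to the amalgamation problem: it need not realize $\tp(N_0/MA)\cup\tp(N_1/MB)$, need not satisfy Kim-independence from $AB$ over $M$, and there is no automorphism carrying it onto such a realization unless you already know the two have the same type over the relevant parameters — which is precisely what the lemma asks you to produce, not a routine bookkeeping step. (Worse, if the model from the Fact contains $A'$, transporting it onto $N_*$ would force $A \subseteq N_*$, which is incompatible with $N_* \ind^{K}_{M} AB$ in general.) There is also a secondary obstruction: even if you instead built $\bar B$ as a coheir Morley sequence over $N_1$ itself, so that $N_* \equiv_{M\bar B} N_1$ would transfer coheir-independence from $N_1$ to $N_*$, you still need the witnessing sequence to be $N_* A$-indiscernible, and the Ramsey/compactness extraction used to arrange this is not known to preserve being a coheir sequence over $N_*$.

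The paper's proof is built to avoid exactly these two problems by using tree Morley sequences instead of coheir sequences. It chooses $I=(B_i)_{i<\omega}$ with $B_0=B$ which is tree Morley simultaneously over $M$ and over $N_1$ (\cite[Proposition 6.4]{kaplan2021transitivity}), uses the chain condition to arrange $A \ind^{K}_{M} I$ with $I$ $MA$-indiscernible, notes $N_1 \ind^{K}_{M} I$, and then applies the independence theorem to $\tp(N_0/MA)\cup\tp(N_1/MI)$ — amalgamating over the \emph{whole sequence}, not just over $B$ — so that $N_* \equiv_{MI} N_1$ automatically makes $I$ tree Morley over $N_*$, with $N_* \ind^{K}_{M} AI$. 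Tree Morleyness survives the extraction of an $N_* A$-indiscernible copy $I'$, and an automorphism over $MA$ sending $N_* I'$ to $N I$ produces a model $N$ over which $I$ (still beginning with $B$) is tree Morley and $NA$-indiscernible; the witnessing theorem then gives $A \ind^{K}_{N} B$, and one checks that $N \equiv_{MA} N_0$, $N \equiv_{MB} N_1$ and $N \ind^{K}_{M} AB$ survive the move. So your instinct — a single construction interleaving the amalgam with the sequence that witnesses Kim-independence over it — is the right one, but the coheir-sequence-plus-Fact-\ref{fact:indiscernibleisheirovermodel} implementation does not close; the tree Morley machinery (existence over two bases at once, transfer via $N_* \equiv_{MI} N_1$, robustness under extraction, and witnessing) is what actually makes the interleaving work.
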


\begin{proof}
    Let $I = (B_{i})_{i < \omega}$ be an indiscernible sequence with $B_{0} = B$ which is both tree Morley over $M$ and over $N_{1}$, which exists by \cite[Proposition 6.4]{kaplan2021transitivity}.  By the chain condition, there is $I' \equiv_{MB_{0}} I$ such that $A \ind^{K}_{M} I'$ and that $I'$ is $MA$-indiscernible. Let $N_{1}'$ be a model with $N'_{1}I' \equiv_{MB_{0}} N_{1}I$.  Note that, by invariance, we have $N'_{1} \ind^{K}_{M} B$ and $I'$ is tree Morley over $M$. Thus, we have $N'_{1} \ind^{K}_{M} I'$ and, hence, by the independence theorem, there is some $N_{*} \models \mathrm{tp}(N_{0}/MA) \cup \mathrm{tp}(N'_{1}/MI')$ with $N_{*} \ind^{K}_{M} AI'$. Then $I'$ is both $N_{*}$-indiscernible and $MA$-indiscernible.  By Ramsey and compactness, extract an $N_{*}A$-indiscernible sequence $I''$ from $I'$.  Then $I''$ is also a tree Morley sequence over $N_{*}$ and $I'' \equiv_{MA} I'$.  Choose some model $N$ such that $NI' \equiv_{MA} N_{*}I''$.  Thus $I'$ is $NA$-indiscernible and is a tree Morley sequence over $N$ starting with $B$.  Thus, by witnessing, we have $A \ind^{K}_{N} B$. 
\end{proof}

\subsection{$T$ is NSOP$_4$}

We will work in the model companion $T^{+}$ of the two-sorted theory of $c$-nilpotent Lie algebras $(K,V,[\cdot,\cdot])$ assuming that $\mathrm{Th}(K) = T^{\dagger}$ is NSOP$_{1}$.  Recall that monster model is called $\mathbb{M}$.  We will try to deduce that $T^{+}$ is NSOP$_{4}$.  

\begin{defn}
 Suppose $n \geq 3$.  We say a theory $T$ has SOP$_{n}$ ($n$-strong order property), if there is some type $p(x,y)$ and an indiscernible sequence $(a_{i})_{i < \omega}$ satisfying the following:
 \begin{itemize}
     \item $(a_{i},a_{j}) \models p \iff i < j$. 
     \item $p(x_{0},x_{1}) \cup p(x_{1},x_{2}) \cup \ldots \cup p(x_{n-2},x_{n-1}) \cup p(x_{n-1},x_{0})$ is inconsistent. 
 \end{itemize}
 We say that $T$ is NSOP$_n$ if it does not have SOP$_n$.
\end{defn}

Now we will argue that $T^{+}$ is NSOP$_{4}$.  We begin by defining a notion of independence.  Given substructures $A$, $B$, and $C$ of $\mathbb{M}$ with $K(A) = K(B) = K(C) = K$, we will abuse notation and write $A \ind^{\otimes}_{C} B$ to mean that $V(A)$ and $V(B)$ are freely amalgamated over $V(C)$ in $V(\langle A B \rangle)$, where each vector space is viewed as an LLA over $K$.  Note that, by Lemma \ref{extension of scalars}, if $A$ is a substructure of $\mathbb{M}$ and $F$ is a field with $K(A) \subseteq F \subseteq K(\mathbb{M})$, then $\langle F, A \rangle$ may be identified with the structure $(F, F \otimes_{K(A)} V(A))$.  We will refer to the substructure of $\mathbb{M}$ generated in this way as the `extension of scalars' of $A$ to the field $F$.  

\begin{defn}
    Suppose $A$, $B$, and $C$ are algebraically closed subsets of $\mathbb{M}$ with $C \subseteq A \cap B$.  We define $A \ind_{C} B$ to hold if the following conditions are satisfied:
    \begin{enumerate}
        \item $K(A) \ind_{K(C)}^{\mathrm{K}} K(B)$.
        \item $K(\langle A,B \rangle) = K(A)K(B)$.
        \item $\tilde{A} \ind^{\otimes}_{\tilde{C}} \tilde{B}$ where $\tilde{A}$, $\tilde{B}$, and $\tilde{C}$ are the extension of scalars of $A$, $B$, and $C$ to the field $K(\langle A,B \rangle)$.
    \end{enumerate}    
    More generally, given arbitrary sets $A,B$ and $C$, we define $A \ind_{C} B$ to mean $\mathrm{acl}(AC) \ind_{\mathrm{acl}(C)} \mathrm{acl}(BC)$.  When $\mathrm{Th}(K(\mathbb{M}))$ is strictly NSOP$_{1}$, we will additionally restrict attention to the case that $K(C)$ is a model of $\mathrm{Th}(K(\mathbb{M}))$ in the language of the field sort. 
\end{defn}


First, we observe that one cannot extend scalars and get something freely amalgamated unless the structures were freely amalgamated in the first place.

\begin{lem} \label{lem:freenessgoesdown}
    Suppose $M \models T_{0}$ and $A,B,C$ are substructures with $C \subseteq A,B \subseteq M$. Let $K = K(A)K(B)$ and $K'$ be any field with $K \subseteq K' \subseteq K(M)$ and let $A',B',C'$ be the associated extensions of scalars of $A$, $B$, and $C$ to LLAs over $K'$.  Then if $A' \ind^{\otimes}_{C'}B'$ as LLAs over $K'$, then $A \ind^{\otimes}_{C} B$ as LLAs over $K$ and $K(\langle A,B \rangle) = K$ in $M$.  
\end{lem}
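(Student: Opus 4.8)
The statement has two conclusions: that free amalgamation descends from the extension of scalars to the original field, and that $K(\langle A, B\rangle) = K = K(A)K(B)$ in $M$. The plan is to establish the second conclusion first, since it is the cleaner one and will clarify what "$A \ind^{\otimes}_C B$ as LLAs over $K$" even means. By Lemma \ref{extension of scalars}, $\langle A, B\rangle$ inside $M$ has field sort generated by $K(A)K(B)$ together with any new scalars produced by the coordinate functions applied to vectors; but $\tilde A \ind^{\otimes}_{\tilde C} \tilde B$ over $K'$ in particular implies that $V(\langle \tilde A, \tilde B\rangle)$ has a basis assembled from bases of $V(\tilde A)$ and $V(\tilde B)$ extending a basis of $V(\tilde C)$ (this is part of the structure of a free amalgam — the ambient algebra is generated by $A'$ and $B'$ with $A' \cap B' = C'$), so coordinate functions applied to vectors in $\langle A, B\rangle$ return scalars already expressible over $K(A)K(B)$. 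Hence $K(\langle A, B\rangle) = K$.

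Once we know the field sort of $\langle A, B\rangle$ is exactly $K$, the LLAs $A, B, C$ over $K$ and their ambient span $\langle A, B\rangle$ over $K$ make literal sense, and $A', B', C'$ are genuinely obtained from them by extension of scalars to $K'$. The core of the proof is then: if the extension of scalars is freely amalgamated, so was the original. I would verify the three defining conditions of a free amalgam (Definition \ref{def:freeamalgambaudisch}) for $\langle A, B\rangle$ over $K$, using that $\langle A', B'\rangle = K' \otimes_K \langle A, B\rangle$. Condition (1), $\langle A, B\rangle = \langle A' \cup B'\rangle$'s preimage, is immediate. For condition (2) (Strongness), $A' \cap B' = C'$ inside $K' \otimes_K \langle A, B\rangle$; since $K'$ is faithfully flat over $K$, intersecting subspaces commutes with $- \otimes_K K'$, so $(A \cap B) \otimes_K K' = A' \cap B' = C' = C \otimes_K K'$, giving $A \cap B = C$ by faithful flatness (or more elementarily: a $K$-basis of $A\cap B$ stays $K'$-independent in $A'\cap B'$, forcing the dimension count). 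For condition (3) (Freeness), given LLA homomorphisms $f: A \to D$, $g: B \to D$ agreeing on $C$ with $D$ an at most $c$-nilpotent LLA over $K$, extend scalars to get $f': A' \to D' := K'\otimes_K D$ and $g': B' \to D'$; by freeness of $A' \ind^\otimes_{C'} B'$ there is a unique $h': \langle A', B'\rangle \to D'$ compatible with $f', g'$; the restriction $h: \langle A, B\rangle \to D$ of $h'$ is well-defined because $h'$ sends the $K$-sub-LLA generated by $A \cup B$ into $D$ (it agrees with $f$ and $g$ on generators, and these land in $D$), and uniqueness of $h$ follows from uniqueness of $h'$. This yields that $\langle A, B\rangle \cong A \otimes_C B$, i.e. $A \ind^\otimes_C B$ over $K$.

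The main obstacle, I expect, is purely bookkeeping around what "LLA over $K$" requires beyond the bare Lie-algebra and vector-space structure — namely that the distinguished Lazard filtration also behaves correctly under restriction of scalars. One must check that the filtration on $\langle A', B'\rangle$ restricts to a Lazard filtration on $\langle A, B\rangle$ and that $P_i(A') = K' \otimes_K P_i(A)$ etc., which is true because extension of scalars is exact and the containments $[P_i, P_j] \subseteq P_{i+j}$ are preserved and reflected by faithful flatness; similarly one needs $A' \cap B' = C'$ to respect the filtration so that $C'$ is an LLA-sub of the amalgam. None of this is deep, but it is the place where the argument could become tedious, and it is worth isolating as a small sublemma: \emph{for a field extension $K'/K$ and LLAs over $K$ sitting inside a common LLA, extension of scalars preserves and reflects LLA-substructure, intersections, and the Lazard filtration.} Everything else is a direct translation through the universal property.
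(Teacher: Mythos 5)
Your second half — descending the universal property by extending the test algebra to $D' = K' \otimes_K D$, invoking freeness of $A' \ind^{\otimes}_{C'} B'$ to obtain $h'$, and restricting to the $K$-generated part, whose image lands in $D$ because it is generated by the images of $f$ and $g$ — is essentially the paper's argument, and your remarks on strongness via flatness and on Lazard filtrations under scalar extension are correct (though not strictly needed: a structure generated by $A$ and $B$ which satisfies the mapping property is automatically isomorphic to the free amalgam, so strongness comes for free). The genuine gap is in your first step, the claim $K(\langle A,B\rangle)=K$. You justify it by asserting that $A' \ind^{\otimes}_{C'} B'$ gives a basis of $V(\langle A',B'\rangle)$ ``assembled from bases of $V(\tilde A)$ and $V(\tilde B)$''. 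This is false: by Definition \ref{def:freeamalgambaudisch} the free amalgam is \emph{generated}, not spanned, by $A' \cup B'$; it contains genuinely new vectors such as $[a,b]$ for $a \in A'\setminus C'$, $b \in B'\setminus C'$ (already for two one-dimensional algebras over $C'=0$ and $c \geq 2$ the amalgam has dimension $\geq 3$). More importantly, even with a correct basis description, the inference that the coordinate functions $\pi_{n,i}$ applied to vectors of $\langle A,B\rangle$ only produce scalars in $K=K(A)K(B)$ requires showing that tuples from the $K$-Lie algebra $L_0$ generated by $V(A)\cup V(B)$ (including iterated brackets) which are $K$-linearly independent stay linearly independent over $K(M)$ — equivalently, that $L_0$ tensors up injectively to the free amalgam over $K'$. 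That is essentially the content of the lemma, so as written your step 1 is unsupported, and your step 2 explicitly leans on it (``once we know the field sort of $\langle A,B\rangle$ is exactly $K$\ldots'').

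The repair is to reverse the order, as the paper does: run the restriction-of-$h'$ argument first, for an arbitrary at most $c$-nilpotent LLA $L$ over $K$ receiving maps from $\tilde A = \langle K,A\rangle$ and $\tilde B = \langle K,B\rangle$ agreeing on $\tilde C$ (note the universal property must be verified for $\tilde A,\tilde B,\tilde C$, not for $A,B,C$ themselves, since these carry smaller fields). Then the special case $L = \tilde A \otimes_{\tilde C} \tilde B$ with the identity embeddings simultaneously identifies the $K$-algebra generated by $A$ and $B$ with the free amalgam over $K$ and shows that no new scalars are created by the coordinate functions, giving $K(\langle A,B\rangle)=K$; alternatively one can prove directly that free amalgamation commutes with extension of scalars and compare dimensions. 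Some such argument is indispensable — it does not follow from ``$\langle A',B'\rangle$ is generated by $A'$ and $B'$ with $A' \cap B' = C'$'' alone.
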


\begin{proof}
    We may view $A' \otimes_{C'} B'$ as the sub-LLA over $K'$ of $M$ generated by $A'$ and $B'$.  In the two-sorted language, this amounts to saying that $A' \otimes_{C'} B'$ can be identified with the vector space sort of $\langle A',B' \rangle = \langle A,B,K' \rangle$ in $M'$.  Let $\tilde{A} = \langle A,K\rangle$, $\tilde{B} = \langle B,K \rangle$, and $\tilde{C} = \langle C,K\rangle$ be the extensions of scalars of $A$, $B$, and $C$ to $K$. We will argue that $\langle A,B \rangle$ can be identified with $\tilde{A} \otimes_{\tilde{C}} \tilde{B}$.  

    Let $L$ be any LLA over $K$ and consider LLA homomorphisms $f: \tilde{A} \to L$ and $g: \tilde{B} \to L$ over $K$ which agree on $\tilde{C}$.  Let $L'$ be the extension of scalars of $L$ to $L'$, an LLA over $K'$.  Then $f$ and $g$ induce $K'$-linear maps $f' : A' \to L'$ and $g' : B' \to L'$ which agree on $C'$.  Hence, there is a map $h' : \langle A',B' \rangle \to L'$ extending $f'$ and $g'$.  In particular, $h = h'|_{\langle A,B \rangle}$ extends $f$ and $g$.  Since the image of $h$ is, therefore, generated as an LLA over $K$ by $f(\tilde{A})$ and $g(\tilde{B})$, it follows that the image of $h$ lands in $L$.  Applying this to the $L = \tilde{A} \otimes_{\tilde{C}} \tilde{B}$ with $f= \mathrm{id}_{\tilde{A}}$ and $g = \mathrm{id}_{\tilde{B}}$, we see in particular that $K(\langle A,B \rangle) = K$. More generally, we have shown $\langle A,B \rangle$ satisfies the desired universal property so $\langle A,B \rangle \cong \tilde{A} \otimes_{\tilde{C}} \tilde{B}$.  
\end{proof}

\begin{lem} \label{existence}
    The relation $\ind$ is invariant, symmetric, and satisfies full existence (over models in the case that the field is strictly NSOP$_{1}$). Moreover, in the case that the field is algebraically closed, $\ind$ is stationary. 
\end{lem}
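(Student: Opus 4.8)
The plan is to verify each property by reducing it, as much as possible, to the corresponding property in one of the two ``coordinate'' relations: Kim-independence $\indi K$ on the field sort (which is symmetric, invariant, and — when the field is strictly NSOP$_1$, over models — satisfies full existence, by the general theory of NSOP$_1$ together with Fact \ref{fact:basicpropertiesoffreeindependence}'s analogue for fields) and the free-amalgamation relation $\indi\otimes$ on LLAs over a fixed field (which satisfies symmetry, invariance, full existence, and stationarity by Fact \ref{fact:basicpropertiesoffreeindependence}). \emph{Invariance} is immediate: each of conditions (1), (2), (3) in the definition of $\ind$ is expressed using only $\indi K$ on the field sort, the field generated by a union of substructures, and $\indi\otimes$ after extension of scalars — all of which are automorphism-invariant — and $\acl$ is invariant, so the general case follows. \emph{Symmetry} follows by symmetry of $\indi K$ for (1), the symmetry of ``$K(\langle A,B\rangle)=K(A)K(B)$'' in $A,B$ for (2), and symmetry of $\indi\otimes$ for (3); again $\acl$ makes the general case immediate.

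For \emph{full existence} I would argue as follows. Given $A$, $B$, $C$ (working over a model $M=C$ when the field is strictly NSOP$_1$, so that extension-base issues for $\indi K$ are handled, using the Fact that every set is an extension base for forking in the relevant field theory, or the model hypothesis), first use full existence for $\indi K$ to find a copy of $K(A)$ over $K(C)$ that is $\indi K$-independent from $K(B)$ over $K(C)$; realize the corresponding field automorphism and apply it, so we may assume (1) holds. Passing to a further conjugate if necessary, we may also arrange (2), i.e. that $K(\langle A,B\rangle)$ is exactly the compositum $K(A)K(B)$ inside $\mathbb M$ — this is automatic once the two fields are placed in ``general position'' by a suitable automorphism over $K(C)$, since then no new field elements are generated. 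Now extend scalars of $A$, $B$, $C$ to the compositum $\tilde K=K(A)K(B)$ to get LLAs $\tilde A,\tilde B,\tilde C$ over $\tilde K$ (using Lemma \ref{extension of scalars} to identify $\tilde A$ with $\langle \tilde K, A\rangle$, etc.), apply full existence for $\indi\otimes$ over $\tilde K$ to replace $\tilde A$ by a copy that is freely amalgamated with $\tilde B$ over $\tilde C$, and realize this inside $\mathbb M$ by an automorphism fixing $\tilde K$ and $V(C)$ pointwise. The key point is that moving $\tilde A$ by such an automorphism does not disturb the field-sort configuration (it fixes the whole field), so (1) and (2) are preserved while (3) is now achieved; finally $(\star)$ / Corollary \ref{cor:typedecomposition} (or directly the observation that quantifier elimination holds relative to the field) guarantees that the resulting structure realizes $\tp(A/C)$, since the field type is unchanged and the two-sorted quantifier-free type is determined by the LLA-isomorphism type over $C$.

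For \emph{stationarity} when the field is algebraically closed: here $K(A)=K(B)=K(C)$ is a fixed algebraically closed field $K$ for all the structures in play (conditions (1) and (2) become vacuous, since $\indi\ACF$-independence over an algebraically closed field is trivial and no field extension occurs), so $A\ind_C B$ reduces exactly to $A\indi\otimes_C B$ as LLAs over $K$. Then stationarity of $\ind$ follows directly from stationarity of $\indi\otimes$ (Fact \ref{fact:basicpropertiesoffreeindependence}) together with Corollary \ref{cor:typedecomposition}: if $A\ind_C B$ and $A'\ind_C B$ with $A\equiv_C A'$, then $A$ and $A'$ have the same field type over $C$ (trivially, the field is fixed) and, by stationarity of $\indi\otimes$, there is an LLA-isomorphism over $\langle B,C\rangle$ carrying $A$ to $A'$; hence $A\equiv_{BC}A'$. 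The main obstacle I anticipate is bookkeeping in the full-existence argument: one must be careful that the automorphism used to achieve free amalgamation of the extensions of scalars can be chosen to fix the compositum field $\tilde K$ and $V(C)$ simultaneously (so that neither (1), (2), nor the type of $A$ over $C$ is spoiled), and that the two successive moves — first fixing the field configuration, then fixing the vector-space configuration — genuinely commute; this is exactly where the relative quantifier elimination $(\star)$ does the work, by letting us control the final type from the field type plus the quantifier-free two-sorted type.
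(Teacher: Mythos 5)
Your invariance/symmetry remarks and the skeleton of your full-existence argument (field-sort full existence, extension of scalars to the compositum, full existence of $\ind^{\otimes}$ over the big field, then quantifier elimination to realize the configuration in $\mathbb{M}$) do parallel the paper, but your treatment of condition (2) is a gap. You claim that $K(\langle A,B\rangle)=K(A)K(B)$ can be secured in advance by putting the two fields ``in general position'', and that it is then preserved when you move $\tilde A$ by an automorphism fixing $\tilde K$ and $V(C)$. Neither claim is justified: new field elements are produced by the coordinate functions $\pi_{n,i}$ applied to vectors coming from both $V(A)$ and $V(B)$, so whether the join creates new scalars depends on the relative position of the vector-space parts, not of the fields, and it can change precisely when you move the vector-space part to achieve (3). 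In the paper condition (2) is not arranged separately at all; it falls out of the free amalgamation performed at the last step, via the chain $\tilde K=K(D)K(B)\subseteq K(\langle D,B\rangle)\subseteq K(\langle \tilde D,\tilde B\rangle)=\tilde K$, the final equality being the statement that a free amalgam of LLAs over $\tilde K$ introduces no scalars beyond $\tilde K$ (this is the content of Lemma \ref{lem:freenessgoesdown}). So this portion of your argument is repairable, but the step that actually yields (2) is missing.

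The stationarity argument, however, rests on a false premise. ``The field is algebraically closed'' means $T^{\dagger}=\ACF$; it does not mean that $A$, $B_{0}$, $B_{1}$, $C$ all carry one common field. By Lemma \ref{lm:algclos}, an algebraically closed subset has field part an algebraically closed subfield of $K(\mathbb{M})$, but these subfields are in general pairwise distinct, so condition (1) is forking independence in $\ACF$ over $K(C)$ (not vacuous), condition (2) is a genuine constraint, and condition (3) takes place only after extending scalars to the compositum $K(\langle A,B_{i}\rangle)$, which is a \emph{different} field for $B_{0}$ and $B_{1}$. Hence $\ind$ does not reduce to $\ind^{\otimes}$ over a fixed field, and stationarity cannot be read off Fact \ref{fact:basicpropertiesoffreeindependence} in one line. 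The paper's proof must first use stationarity of forking in $\ACF$ to extend the $C$-isomorphism $B_{0}\to B_{1}$, restricted to field parts, to a $K(A)$-isomorphism $\tilde K_{0}=K(A)K(B_{0})\to \tilde K_{1}=K(A)K(B_{1})$, realize it as an automorphism of $\mathbb{M}$ by quantifier elimination, lift it to the extensions of scalars $\tilde A_{i}$, $\tilde B_{i}$, $\tilde C_{i}$, and then use the universal property of the free amalgam (in both directions, to get an isomorphism rather than a mere homomorphism) to produce an isomorphism $\langle A,B_{0}\rangle\to\langle A,B_{1}\rangle$ fixing $A$ and carrying $B_{0}$ to $B_{1}$, and only then conclude $B_{0}\equiv_{A}B_{1}$ by quantifier elimination. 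This mechanism of matching the two composita and transporting the free amalgams across the field isomorphism is the core of the stationarity proof and is absent from your proposal.
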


\begin{proof}
    Invariance and symmetry are clear, by the invariance and symmetry of $\ind^{K}$ in NSOP$_{1}$ theories.  To prove full existence, suppose we are given algebraically closed $A$, $B$, and $C$ with $C \subseteq A \cap B$. We will assume $K(C)$ is a model of $\mathrm{Th}(K(\mathbb{M}))$.  We want to find some $D \equiv_{C} A$ such that $D \ind_{C} B$.  First, we will use full existence in $\mathrm{Th}(K(\mathbb{M}))$ to find some $K' \equiv_{K(C)} K(A)$ with $K' \ind^{\mathrm{K}}_{K(C)} K(B)$.  Let $\sigma \in \mathrm{Aut}(K(\mathbb{M})/K(C))$ be an automorphism with $\sigma(K(A)) = K'$. Let $\overline{c}$ be a basis of $V(C)$.  Let $C' = \langle K(A), C \rangle$, so $V(C')$ is the extension of scalars of $V(C)$ to $K(A)$.  Let $\overline{a}$ be a $K(A)$-basis of $V(A)$ over $V(C')$ ($\overline{a}$ is possibly the empty tuple), so $\overline{a}\overline{c}$ is a basis of $V(A)$ as a $K(A)$-vector space.  Let $A'$ be the $2$-sorted LLA over $K'$ with basis $\overline{a}\overline{c}$, viewed naturally as an extension of $C'' = \langle K',C \rangle$, with structure induced from $A$ by the isomorphism $\sigma$, i.e. define $\tilde{\sigma} : A \to A'$ by 
    $$
    \sum \alpha_{i} a_{i} + \sum \beta_{j} c_{j} \mapsto \sum \sigma(\alpha_{i}) a_{i} + \sum \sigma(\beta_{j}) c_{j},
    $$
    and we define an LLA structure on $A'$ so that $\tilde{\sigma}$ is an isomorphism.  By quantifier elimination, we can embed $A'$ into $\mathbb{M}$ over $C''$.  Let $A''$ denote the image and let $\overline{a}''$ denote the image of $\overline{a}$. Note that $K(A'') = K'$.  

    Now let $\tilde{K} = K'K(B) = K(A'')K(B)$ and define $\tilde{A}'' = \langle \tilde{K},A'' \rangle$, $\tilde{B} = \langle \tilde{K},B \rangle$, and $\tilde{C} = \langle \tilde{K},C \rangle = \langle \tilde{K},C' \rangle$. By Fact \ref{fact:basicpropertiesoffreeindependence}, there is some $\tilde{D}$, isomorphic over $\tilde{C}$ to $\tilde{A}''$ as an LLA over $\tilde{K}$ such that $\tilde{D} \ind^{\otimes}_{\tilde{C}} \tilde{B}$.  By quantifier elimination, we may assume $\tilde{D}$ is embedded in $\mathbb{M}$ over $\tilde{B}$ and we have $\tilde{D} \equiv_{\tilde{C}} \tilde{A}''$.  Let $\overline{d}$ be the tuple corresponding to $\overline{a}''$ in $\tilde{D}$.  Then let $D = \langle K', \overline{d},\overline{c} \rangle$.  Note that, by construction, $D \equiv_{C} A$ and $\langle \tilde{K},D \rangle = \tilde{D}$.  Moreover, we have 
    $$
    \tilde{K} = K(D)K(B) \subseteq K(\langle D,B \rangle) \subseteq K(\langle \tilde{D},\tilde{B} \rangle) = \tilde{K},
    $$
    so $K(\langle D,B \rangle) = \tilde{K}$ and we have $D \ind_{C} B$ as desired. 

    Finally, we restrict to the case when the field is algebraically closed and prove stationarity. Suppose $A$, $B_{0}$, $B_{1}$ are algebraically closed sets containing an algebraically closed set $C$.  Suppose $B_{0} \equiv_{C} B_{1}$ and $A \ind_{C} B_{i}$ for $i = 0,1$. Let $g: B_{0} \to B_{1}$ be a $C$-isomorphism witnessing $B_{0} \equiv_{C} B_{1}$.  Define $\tilde{K}_{0} = K(\langle A,B_{0} \rangle)$ and $\tilde{K}_{1} = K(\langle A,B_{1} \rangle )$.  By stationarity of forking independence in ACF and the fact that $\tilde{K}_{0} = K(A)K(B_{0})$ and $\tilde{K}_{1} = K(A)K(B_{1})$, we know that $g|_{K(B_{0})}$ extends to a $K(A)$-isomorphism $f:\tilde{K}_{0} \to \tilde{K}_{1}$. Since $\langle \tilde{K}_{i} \rangle = \tilde{K}_{i}$ (i.e. $V(\langle \tilde{K}_{i} \rangle) = 0$) for $i = 0,1$, we know by quantifier elimination that $f$ extends to an automorphism $\sigma \in \mathrm{Aut}(\mathbb{M}/K(A))$.
    
    For $i = 0,1$, let $\tilde{A}_{i} = \langle \tilde{K}_{i},A \rangle$, $\tilde{B}_{i} = \langle \tilde{K}_{i},B_{i} \rangle$, and $\tilde{C}_{i} = \langle \tilde{K}_{i},C \rangle$.  Since $V(\tilde{A}_{i}) \cong V(A) \otimes_{K(A)} \tilde{K}_{i}$ for $i =0,1$, we have an isomorphism $\tilde{f}: \tilde{A}_{0} \to \tilde{A}_{1}$ induced by the isomorphism $v \otimes c \mapsto v \otimes f(c)$ for $v \in V(A)$ and $c \in \tilde{K}_{0}$.  Similarly, we have a map $\tilde{g}: \tilde{B}_{0} \to \tilde{B}_{1}$ induced by the isomorphism $V(B_{0}) \otimes_{K(B_{0})} \tilde{K}_{0} \to V(B_{1}) \otimes_{K(B_{1})} \tilde{K}_{1}$ given by $v \otimes c \mapsto g(v) \otimes f(c)$ for $v \in V(B_{0})$ and $c \in \tilde{K}_{0}$. By construction, $\tilde{f}|_{\tilde{C}_{0}} = \tilde{g}|_{\tilde{C}_{0}}$.  Let $L = \sigma^{-1}(\tilde{A}_{1} \otimes_{\tilde{C}_{1}} \tilde{B}_{1})$, which may be regarded as an LLA over $\tilde{K}_{0}$.
    
    The maps $\sigma^{-1} \circ \tilde{f}$ and $\sigma^{-1} \circ \tilde{g}$ may be regarded as homomorphisms of LLAs over $\tilde{K}_{0}$, which agree on $\tilde{C}_{0}$, and therefore, by the universal property of the free amalgam, there is a unique map $h: \tilde{A}_{0} \otimes_{\tilde{C}_{0}} \tilde{B}_{0} \to L$ extending $\sigma^{-1} \circ \tilde{f}$ and $\sigma^{-1} \circ \tilde{g}$ (note that it makes sense to say that this map extends $\sigma^{-1} \circ \tilde{f}$ and $\sigma^{-1} \circ \tilde{g}$ because we are identifying $\tilde{A}_{0} \otimes_{\tilde{C}_{0}} \tilde{B}_{0}$ with $\langle \tilde{A}_{0},\tilde{B}_{0} \rangle$).  By invariance, $L$ is the free amalgam of $\sigma^{-1}(\tilde{A}_{1})$ and $\sigma^{-1}(\tilde{B}_{1})$ over $\sigma^{-1}(\tilde{C}_{1})$.  The same argument, applied to the maps $\tilde{f}^{-1} \circ \sigma : \sigma^{-1}(\tilde{A}_{1}) \to (\tilde{A}_{0} \otimes_{\tilde{C}_{0}} \tilde{B}_{0})$ and $\tilde{g}^{-1} \circ \sigma : \sigma^{-1}(\tilde{B}_{1}) \to (\tilde{A}_{0} \otimes_{\tilde{C}_{0}} \tilde{B}_{0})$ implies that there is a map $L \to (\tilde{A}_{0} \otimes_{\tilde{C}_{0}} \tilde{B}_{0})$ extending $\tilde{f}^{-1} \circ \sigma$ and $\tilde{g}^{-1} \circ \sigma$, which must therefore be the inverse of $h$.  This shows $h$ is an isomorphism.

    It follows, then, that $\sigma \circ h : \tilde{A}_{0} \otimes_{\tilde{C}_{0}} \tilde{B}_{0} \to \tilde{A}_{1} \otimes_{\tilde{C}_{1}} \tilde{B}_{1}$ is an isomorphism. Since $\sigma \circ h$ extends $\tilde{f}$, which was defined to fix both $V(A)$ and $K(A)$, we know that $\sigma\circ h$ fixes $A$. Additionally, as $\sigma \circ h$ extends $\tilde{g}$, it takes $B_{0}$ to $B_{1}$.  By quantifier elimination, this shows $B_{0} \equiv_{A} B_{1}$, which proves stationarity. 
\end{proof}

\begin{rem}
    The increasingly standard approach for showing that a structure has NSOP$_{4}$ is to deduce NSOP$_{4}$ from the existence of a stationary independence relation. However, this approach cannot possibly work to show that the theory $T^{+}$ is NSOP$_{4}$ when $T^{\dagger}$ is, for example, the theory of pseudo-finite fields since these fields themselves do not have any stationary independence relations.  To see this, note that the usual axioms of a stationary independence relation entail that if $a_{1}, \ldots, a_{n}$ is an independent sequence of tuples with the same type over $E$, then $a_{1}\ldots a_{n} \equiv_{E} a_{\sigma(1)}\ldots a_{\sigma(n)}$ for all $\sigma \in \mathfrak S _{n}$ (and, indeed, the $n=2$ case is the key property used in proofs of NSOP$_{4}$). But Beyarslan and Hrushovski \cite{beyarslan2012algebraic} show that in almost every pseudo-finite field $F$, there is a definable $p$\emph{-tournament} for some prime $p$, which is a definable relation $R(x_{1}, \ldots, x_{p})$ such that, for any $p$ distinct elements $a_{1}, \ldots, a_{p} \in F$, there is a unique $\sigma \in \mathfrak S_{p}$ such that $F \models R(a_{\sigma(1)}, \ldots, a_{\sigma(p)})$.  This is clearly incompatible with the existence of a stationary independence relation. We note, moreover, that there \emph{is} a stationary independence relation on $\omega$-free PAC fields, defined by Chatzidakis in \cite{Chatzidakis2002properties}, which allows one to define a stationary independence relation for $T^{+}$ when $T^{\dagger}$ is the theory of an $\omega$-free PAC field.  This has the amusing consequence that it is \emph{easier} to prove NSOP$_{4}$ for $c$-nilpotent LLAs over $\omega$-free PAC fields than it is over pseudo-finite fields. 
\end{rem}

The proof of full existence gives a bit more:

\begin{lem} \label{lem:strongextension}
    Suppose $A$, $B$, and $C$ are algebraically closed sets with $C \subseteq A,B$.  If $K(A) \ind^{K}_{K(C)} K(B)$, then there is some $A' \equiv_{CK(A)} A$ such that $A' \ind_{C} B$.  
\end{lem}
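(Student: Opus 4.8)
The plan is to rerun the proof of full existence from Lemma~\ref{existence}, but in the special case where the field sort is not moved at all. In that proof one first invokes full existence in $\mathrm{Th}(K(\mathbb{M}))$ to produce $K' \equiv_{K(C)} K(A)$ with $K' \ind^{K}_{K(C)} K(B)$ and an automorphism $\sigma$ realizing this; here we are \emph{handed} $K(A) \ind^{K}_{K(C)} K(B)$, so we may instead take $K' = K(A)$ and $\sigma = \mathrm{id}_{K(\mathbb{M})}$. The payoff of this choice is that every isomorphism occurring in the construction then fixes $K(A)$ pointwise, and this is exactly what upgrades the conclusion from $A' \equiv_{C} A$ to $A' \equiv_{CK(A)} A$.

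Concretely, I would set $\tilde{K} = K(A)K(B)$ and form the extensions of scalars $\tilde{A} = \langle \tilde{K},A\rangle$, $\tilde{B} = \langle \tilde{K},B\rangle$, $\tilde{C} = \langle \tilde{K},C\rangle$ inside $\mathbb{M}$ via Lemma~\ref{extension of scalars}. By Fact~\ref{fact:basicpropertiesoffreeindependence} (full existence for $\ind^{\otimes}$, in the category of LLAs over $\tilde{K}$) there is some $\tilde{D}$, isomorphic to $\tilde{A}$ over $\tilde{C}$, with $\tilde{D} \ind^{\otimes}_{\tilde{C}} \tilde{B}$; by Corollary~\ref{cor:typedecomposition} I may take $\tilde{D}$ embedded into $\mathbb{M}$ over $\tilde{B}$, so that $\langle \tilde{D},\tilde{B}\rangle$ is the free amalgam $\tilde{D}\otimes_{\tilde{C}}\tilde{B}$ and the isomorphism $\tilde{A}\to\tilde{D}$ fixes $\tilde{C}$, hence $\tilde{K}$ and in particular $K(A)$, pointwise. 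Now choose a $K(C)$-basis $\overline{c}$ of $V(C)$ and complete it to a $K(A)$-basis $\overline{a}\,\overline{c}$ of $V(A)$; by extension of scalars $\overline{a}\,\overline{c}$ is a $\tilde{K}$-basis of $V(\tilde{A})$, so its image $\overline{d}\,\overline{c}$ is a $\tilde{K}$-basis of $V(\tilde{D})$ and the bracket structure constants on $\overline{d}\,\overline{c}$ agree with those on $\overline{a}\,\overline{c}$, hence lie in $K(A)$. I then put $A' = \langle K(A),\overline{d},\overline{c}\rangle$, the $\LL_{K,V,c}$-substructure of $\mathbb{M}$ generated by these elements, and claim $A' = D$ plays the role of the $D$ from the full-existence proof.

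It remains to check the properties of $A'$, and the only point requiring care is the descent in the previous paragraph — that the abstract LLA $\tilde{D}$ over $\tilde{K}$ really is the extension of scalars of a substructure $A'$ whose field is exactly $K(A)$ — which is forced because the relevant structure constants and coordinate-function values lie in $K(A)$ and $\overline{d}\,\overline{c}$ stays linearly independent in $\mathbb{M}$ (the predicates $\theta_{n}$ being absolute, as noted after the definition of $T_{0}$). This gives $K(A') = K(A)$, $V(A')$ the $K(A)$-span of $\overline{d}\,\overline{c}$, and $\tilde{D} = \langle\tilde{K},A'\rangle$. The $K(A)$-linear bijection fixing $K(A)$ and $\overline{c}$ and sending $\overline{a}\mapsto\overline{d}$ is then an $\LL_{K,V,c}$-isomorphism $A\to A'$ fixing $C\cup K(A)$ pointwise whose field part is the identity, so Corollary~\ref{cor:typedecomposition} yields $A'\equiv_{CK(A)}A$; in particular $A'$ is algebraically closed with $C\subseteq A'$. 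Finally, $A'\ind_{C}B$: clause~(1) of the definition is the hypothesis since $K(A')=K(A)$; since $A'\subseteq\tilde{D}$ and $B\subseteq\tilde{B}$ we have $\langle A',B\rangle\subseteq\langle\tilde{D},\tilde{B}\rangle$, whose field is $\tilde{K}=K(A')K(B)$ by Lemma~\ref{lem:freenessgoesdown}, which gives clause~(2); and with $\tilde{D},\tilde{B},\tilde{C}$ identified as the extensions of scalars of $A',B,C$ to $\tilde{K}$, clause~(3) becomes exactly $\tilde{D}\ind^{\otimes}_{\tilde{C}}\tilde{B}$, as arranged. So there is genuinely less to do than in the full-existence proof, the field sort never moving; the ``hard part'' is merely the routine verification that fixing $\sigma=\mathrm{id}$ is legitimate and that it propagates to fixing $K(A)$ throughout.
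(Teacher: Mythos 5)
Your proposal is correct and takes essentially the same route as the paper: the paper's own proof of Lemma~\ref{lem:strongextension} consists precisely of the observation that in the full-existence construction of Lemma~\ref{existence} one may take $K' = K(A)$ and $\sigma = \mathrm{id}$, so that the resulting $D$ satisfies $D \equiv_{CK(A)} A$. Your write-up simply makes explicit the verifications (descent of $\tilde{D}$ to a substructure $A'$ with $K(A') = K(A)$, elementarity of the field-fixing isomorphism via $(\star)$, and the three clauses of $\ind_{C}$) that the paper leaves implicit.
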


\begin{proof}
    Our assumption gives us that, in the preceding proof, we may take $K' = K(A)$ and $\sigma$ to be the identity. Then the $D$ constructed there will have $D \equiv_{CK(A)} A$.  
\end{proof}

We will also define a notion of weak independence for $T^{+}$:

\begin{defn}
 Suppose $A$ and $B$ are algebraically closed subsets of $\mathbb{M}$ and $M$ is a model with $M \subseteq A \cap B$.  We say that $A$ and $B$ are \emph{weakly independent} if $K(A) \ind^{K}_{K(M)} K(B)$ and $V(A)$ and $V(B)$ are linearly independent over $V(M)$ (in $V(\mathbb{M})$).  We denote this by $A \ind^{w}_{M} B$.  More generally, we may write $a \ind^{w}_{M} b$ to mean $\mathrm{acl}(aM) \ind^{w}_{M} \mathrm{acl}(bM)$. 
\end{defn}

Notice that, for a model $M$, we have $a \ind_{M} b$ implies $a \ind_{M}^{w} b$, which implies $a \ind^{a}_{M} b$.  

\begin{lemma}\label{lm:WT} The relation $\ind$ satisfies ``weak transitivity" over models:
    \[a\ind_{Md} b \text{ and } a\ind^{w}_M d \text{ and } d\ind^{w}_M b  \implies a\ind_M b\]
\end{lemma}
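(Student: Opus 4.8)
The plan is to verify the three defining clauses of $A\ind_{M}B$ for $A=\acl(aM)$, $B=\acl(bM)$, with $M$ a model; write also $D=\acl(Md)$, so that $\acl(aMd)=\acl(AD)$, $\acl(bMd)=\acl(BD)$, and the hypothesis $a\ind_{Md}b$ reads $\acl(AD)\ind_{D}\acl(BD)$. For the field clause, clause (1) of $\acl(AD)\ind_{D}\acl(BD)$ together with monotonicity of $\ind^{K}$ gives $K(A)\ind^{K}_{K(D)}K(B)$, and $a\ind^{w}_{M}d$ gives $K(A)\ind^{K}_{K(M)}K(D)$ (while $d\ind^{w}_{M}b$ gives $K(D)\ind^{K}_{K(M)}K(B)$). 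Now $K(M)\models T^{\dagger}$, and $K(D)=K(\acl(Md))$ is a model of $T^{\dagger}$ as well: when $T^{\dagger}$ is strictly NSOP$_{1}$ this is forced by the convention in the definition of $\ind$ under which $\ind_{Md}$ is considered at all, and when $T^{\dagger}$ is simple, $\ind^{K}=\ind^{f}$. Thus transitivity of Kim-independence over models applies (\cite{kaplan2021transitivity}, resp. the classical transitivity of forking in the simple case): combining $K(A)\ind^{K}_{K(M)}K(D)$ with $K(A)\ind^{K}_{K(D)}K(B)$ yields $K(A)\ind^{K}_{K(M)}K(D)K(B)$, hence $K(A)\ind^{K}_{K(M)}K(B)$, which is clause (1) of $A\ind_{M}B$.

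For clauses (2) and (3), put $K'=K(\acl(AD))\,K(\acl(BD))$; clause (2) of the hypothesis says $K'=K(\langle\acl(AD),\acl(BD)\rangle)$, and $K(A)K(B)\subseteq K'$. For a substructure $X$ with $K(X)\subseteq K'$ write $\widehat X=\langle K',X\rangle$ for its extension of scalars to $K'$; by Lemmas \ref{extension of scalars} and \ref{lm:algclos}, $\widehat{\acl(AD)}=\langle\widehat A,\widehat D\rangle$ and $\widehat{\acl(BD)}=\langle\widehat B,\widehat D\rangle$, so clause (3) of the hypothesis becomes the statement that, as LLAs over $K'$,
$$
\langle\widehat A,\widehat D\rangle\ \ind^{\otimes}_{\widehat D}\ \langle\widehat B,\widehat D\rangle ,
$$
and the two weak-independence hypotheses become: $V(\widehat A),V(\widehat D)$ are linearly independent over $V(\widehat M)$, and $V(\widehat D),V(\widehat B)$ are linearly independent over $V(\widehat M)$ (extension of scalars preserving linear independence). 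By Lemma \ref{lem:freenessgoesdown} applied to $A,B,M$ with $K(A)K(B)\subseteq K'\subseteq K(\mathbb{M})$, it now suffices to prove
$$
(\star)\qquad \widehat A\ \ind^{\otimes}_{\widehat M}\ \widehat B\quad\text{as LLAs over }K' ,
$$
since the lemma then delivers both $K(\langle A,B\rangle)=K(A)K(B)$ (clause (2)) and $A\ind^{\otimes}_{M}B$ over that field (clause (3)).

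To prove $(\star)$, drop the hats: we must show that if $A,B,D$ are LLAs over the fixed field $K'$, sitting inside a common LLA with common subalgebra $M$, and $\langle AD\rangle\ind^{\otimes}_{D}\langle BD\rangle$ while $V(A),V(D)$ (resp. $V(D),V(B)$) are linearly independent over $V(M)$, then the canonical surjection $A\otimes_{M}B\twoheadrightarrow\langle AB\rangle$ induced by the inclusions is an isomorphism. Note that the general stationary-independence-relation properties of $\ind^{\otimes}$ (Fact \ref{fact:basicpropertiesoffreeindependence}) alone are not enough here, since we only have linear — not $\ind^{\otimes}$ — independence of $A$ from $D$ over $M$. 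First, strongness of $\langle AD\rangle\otimes_{D}\langle BD\rangle$ (clause (2) of Definition \ref{def:freeamalgambaudisch}) gives $V(\langle AD\rangle)\cap V(\langle BD\rangle)=V(D)$, whence $V(A)\cap V(B)\subseteq V(D)\cap V(A)=V(M)$; so, combined with the linear-independence hypotheses, $V(A)$ and $V(B)$ are linearly independent over $V(M)$, and the $V(A)\oplus_{V(M)}V(B)$ summand of $A\otimes_{M}B$ maps injectively into $V(\langle AB\rangle)$. It remains to check that the "new bracket part" of $\langle AB\rangle$ over $V(A)+V(B)$ has full dimension. For this one invokes the structure theory of free amalgams of $c$-nilpotent Lazard Lie algebras from \cite{d2023model} (after Baudisch and Maier), which exhibits $V(\langle AD\rangle\otimes_{D}\langle BD\rangle)$ as $V(\langle AD\rangle)+V(\langle BD\rangle)$ together with a free piece $W$ spanned by iterated brackets mixing generators of the two sides modulo $V(D)$: one identifies $V(\langle AB\rangle)$ as the span of $V(A)+V(B)$ together with exactly the sub-collection of those bracket generators that mix only generators coming from $V(A)$ and from $V(B)$ modulo $V(M)$, and checks this sub-collection is linearly independent over $V(A)+V(B)$. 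Injectivity of $A\otimes_{M}B\twoheadrightarrow\langle AB\rangle$ follows, giving $(\star)$.

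The field-sort step and the reduction to $(\star)$ are essentially bookkeeping once the cited transitivity results and Lemmas \ref{extension of scalars}, \ref{lm:algclos}, \ref{lem:freenessgoesdown} are available; the crux, and the main obstacle, is $(\star)$: that mere linear independence of the vector spaces of $A$ and of $B$ from that of $D$ over $M$ suffices to push free amalgamation of $A$ with $B$ down from the base $D$ to the base $M$, once $A$ and $B$ are already maximally free over $D$. Making the final bracket bookkeeping rigorous — isolating precisely which part of the free piece $W$ of the $D$-amalgam is seen by $\langle AB\rangle$ and verifying it forms an independent coordinate subspace — is the step that genuinely requires the structure theory of free amalgams and some care.
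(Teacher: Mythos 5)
Your skeleton is the same as the paper's: handle the field sort by transitivity and monotonicity of $\ind^{K}$, extend scalars to $K(\langle \acl(aMd),\acl(bMd)\rangle)$, and push the free amalgamation back down to the smaller field via Lemma \ref{lem:freenessgoesdown}. The field-sort step and the reduction are fine. The problem is that your step $(\star)$ — that free amalgamation of $\langle A,D\rangle$ with $\langle B,D\rangle$ over $D$, together with mere linear independence of $V(A)$ from $V(D)$ and of $V(B)$ from $V(D)$ over $V(M)$, forces $A$ and $B$ to be freely amalgamated over $M$ — is precisely the crux of the lemma, and you do not prove it. You establish only the easy vector-space-level facts ($V(A)\cap V(B)=V(M)$, injectivity on the linear summand), and then appeal to ``the structure theory of free amalgams'' to claim that the mixed-bracket part of $\langle A,B\rangle$ is as free as it should be, explicitly conceding that isolating the relevant bracket generators and verifying the absence of extra relations ``requires some care.'' That concession is the gap: controlling all relations among iterated brackets across all levels of the Lazard series is where the work lies, and note that the universal property of Definition \ref{def:freeamalgambaudisch} cannot be invoked directly here, since a homomorphism defined on $A$ (agreeing with one on $B$ over $M$) has no canonical extension to $\langle A,D\rangle$ agreeing over $D$, so freeness over $M$ does not follow by soft arguments from freeness over $D$.

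The paper closes exactly this gap by adapting the machinery of \cite{d2023model}: the proofs of Lemma 5.11, Corollary 5.12 and Theorem 5.13 there carry over to the two-sorted setting, and the one genuinely new observation the paper supplies is that the ideal-transfer lemma (Lemma 5.11 of \cite{d2023model}), originally stated for heir/coheir independence, in fact only needs \emph{algebraic} independence $E\cap F=G$ — which is what the weak-independence hypotheses deliver through linear independence of the vector sorts. With that weakening, the argument of Theorem 5.13 yields the free amalgamation over the large field, and Lemma \ref{lem:freenessgoesdown} finishes as you describe. So to repair your proof you should either import and adapt those three results explicitly (checking the weakened hypothesis as the paper does), or actually carry out the basis/relation analysis of the Baudisch-style construction that you sketched; as written, $(\star)$ is asserted rather than proved.
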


\begin{proof}
Let $M$ be a model of $T^+$ and assume $a$, $b$, $c$, and $d$ are finite tuples satifying the hypotheses of the statement of ``weak transitivity".  Let $C = \mathrm{acl}(Md)$, $A = \mathrm{acl}(aC)$, and $B = \mathrm{acl}(bC)$.  Additionally, set $A_{0} = \mathrm{acl}(aM)$ and $B_{0} = \mathrm{acl}(bM)$.  

        Our assumptions imply $K(A) \ind^{K}_{K(C)} K(B)$ and $K(A_{0}) \ind^{K}_{K(M)} K(C)$.  Therefore, by transitivity and monotonicity, we get the desired $K(A_{0}) \ind^{K}_{K(M)} K(B_{0})$.  

        The proofs of \cite[Lemma 5.11]{d2023model} and \cite[Corollary 5.12]{d2023model} readily adapt to the two-sorted case, though we give details on one point.   The claim of \cite[Lemma 5.11]{d2023model} is that if, given LLAs $E,F,G$ over a fixed field with $G \subseteq E,F$, then, assuming $E$ and $F$ are either heir or coheir independent over $G$, then it follows that if $e_{1}, \ldots, e_{n} \in E$ and $\langle F, e_{<i}\rangle$ is an ideal of $\langle F, e_{\leq i} \rangle$ for all $i \leq n$, then $\langle G, e_{<i}\rangle$ is an ideal of $\langle G,e_{\leq i} \rangle$ for all $i \leq n$. We claim this follows the weaker assumption that $E$ and $F$ are \emph{algebraically} independent over $G$.  To see this, assume $E$ and $F$ are algebraically independent over $G$ and suppose we are given $e_{1}, \ldots, e_{n} \in E$ such that $\langle F, e_{<i}\rangle$ is an ideal of $\langle F, e_{\leq i} \rangle$ for all $i \leq n$. Assume for induction that we have shown for some $m < n$ that $\langle G, e_{<i}\rangle$ is an ideal of $\langle G,e_{\leq i} \rangle$ for all $i \leq m$.  For the induction step, we must show $[e_{m+1},v] \in \langle G, e_{\leq m}\rangle$ for $v \in \langle G, e_{\leq m} \rangle$.  Write $v = g + \sum_{i = 1}^{m} \lambda_{i} e_{i}$. By our hypothesis, $[e_{m+1}, v] \in \langle F, e_{\leq m} \rangle$ so we may write
        $$
        [e_{m+1},v] = f + \sum_{i = 1}^{m} \mu_{i}e_{i},
        $$
        for some $f \in F$. But then $f = [e_{m+1},v] - \sum_{i=1}^{m} \mu_{i}e_{i}\in E \cap F = G$, by our assumption of algebraic independence, so we have $[e_{m+1},v] \in \langle G, e_{\leq m} \rangle$ as desired. 
        
        The argument of \cite[Theorem 5.13]{d2023model}, then, gives us that $\tilde{A}'_{0} \ind_{\tilde{M}'}^{\otimes} \tilde{B}_{0}'$ where $\tilde{A}'_{0} = \langle K(\langle A,B \rangle),A \rangle$, $\tilde{B}_{0}' = \langle K(\langle A,B \rangle), B \rangle$, and $\tilde{M}' = \langle K(\langle A,B \rangle), M \rangle$.  We claim that this implies $A_{0} \ind_{M} B_{0}$.  We are left to show that if $\tilde{A}_{0} = \langle K(\langle A_{0},B_{0} \rangle),A\rangle$, $\tilde{B}_{0} = \langle K(\langle A_{0},B_{0}),B_{0}\rangle$, and $\tilde{M} = \langle K(\langle A_{0},B_{0} \rangle),M\rangle$, then $\tilde{A}_{0}  \ind^{\otimes}_{\tilde{M}} \tilde{B}_{0}$, and this follows immediately by Lemma \ref{lem:freenessgoesdown}. 
\end{proof}

\begin{prop} \label{prop:weakIT}
    Suppose $M \models T^+$, $A = \mathrm{acl}(AM)$, $B = \mathrm{acl}(BM)$, and $A \ind_{M} B$.  If $C_{i} = \mathrm{acl}(C_{i}M)$ and $K(C_{i}) \models \mathrm{Th}(K(\mathbb{M}))$ for $i = 0,1$, $C_{0} \equiv_{M} C_{1}$, $C_{0} \ind^{w}_{M} A$, $C_{1} \ind^{w}_{M} B$, then there is $C_{*}$ such that $C_{*} \equiv_{A} C_{0}$ and $C_{*} \equiv_{B} C_{1}$. 
\end{prop}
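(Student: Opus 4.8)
The plan is to treat the field sort first, using the NSOP$_{1}$ independence theorem for $T^{\dagger}$, then to amalgamate the vector space sorts by a free amalgamation over a common subalgebra, and finally to read off the conclusion from relative quantifier elimination ($(\star)$ / Corollary~\ref{cor:typedecomposition}). The first step is to apply Lemma~\ref{lem:NSOP1technical} in the field sort. Since $M \models T^{+}$ the field $K(M)$ is a model of $T^{\dagger}$, and $K(C_{0}),K(C_{1}) \models T^{\dagger}$ by hypothesis; from $A \ind_{M} B$ we get $K(A) \ind^{K}_{K(M)} K(B)$, from $C_{0} \ind^{w}_{M} A$ and $C_{1} \ind^{w}_{M} B$ we get $K(C_{0}) \ind^{K}_{K(M)} K(A)$ and $K(C_{1}) \ind^{K}_{K(M)} K(B)$, and $C_{0} \equiv_{M} C_{1}$ gives $K(C_{0}) \equiv_{K(M)} K(C_{1})$. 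Lemma~\ref{lem:NSOP1technical} then yields a field $K_{*} \subseteq K(\mathbb{M})$ with $K_{*} \equiv_{K(A)K(M)} K(C_{0})$, $K_{*} \equiv_{K(B)K(M)} K(C_{1})$, $K_{*} \ind^{K}_{K(M)} K(A)K(B)$, and $K(A) \ind^{K}_{K_{*}} K(B)$.

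Next I would normalize the fields of $C_{0}$ and $C_{1}$. Lift the field automorphism witnessing $K_{*} \equiv_{K(A)K(M)} K(C_{0})$ to an automorphism of $\mathbb{M}$ fixing $A$ pointwise: this is possible by $(\star)$ and Lemma~\ref{extension of scalars}, since the structure constants of a $K(A)$-basis of $V(A)$ lie in $K(A)$ and are therefore fixed. Applying it to $C_{0}$ produces $\hat C_{0} \equiv_{A} C_{0}$ with $K(\hat C_{0}) = K_{*}$, and symmetrically $\hat C_{1} \equiv_{B} C_{1}$ with $K(\hat C_{1}) = K_{*}$. These automorphisms fix $M$, so $\hat C_{0} \equiv_{M} \hat C_{1}$ still holds, and by Lemma~\ref{lem:better base} (with base $M$ and $K' = K_{*}$) there is an isomorphism $\hat C_{0} \to \hat C_{1}$ over $M$ which is the identity on $K_{*}$. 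Identifying along it gives a single LLA $\mathsf C$ over $K_{*}$, which (because $\ind^{w}$ is preserved by the automorphisms above) is linearly independent from $V(A)$, and from $V(B)$, over $V(M)$. Let $D_{0} = \langle A,\mathsf C\rangle$ and $D_{1} = \langle B,\mathsf C\rangle$ be the substructures of $\mathbb{M}$ realizing $\mathrm{tp}(C_{0}/A)$ and $\mathrm{tp}(C_{1}/B)$, with $V(A) \cap V(\mathsf C) = V(M) = V(B) \cap V(\mathsf C)$.

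Now I would amalgamate. Pass to the compositum field $\hat K = K(D_{0})K(D_{1})$ inside $K(\mathbb{M})$ and extend scalars of $D_{0}, D_{1}, \mathsf C$ to $\hat K$; consistency here uses $K_{*} \ind^{K}_{K(M)} K(A)K(B)$ and $K(A) \ind^{K}_{K_{*}} K(B)$. Form the free amalgam $S = \tilde D_{0} \otimes_{\tilde{\mathsf C}} \tilde D_{1}$ of LLAs over $\hat K$, which exists and is again a $c$-nilpotent LLA by Fact~\ref{fact:basicpropertiesoffreeindependence}. Inside $S$ one has $\langle \mathsf C, A\rangle = D_{0}$ and $\langle \mathsf C, B\rangle = D_{1}$ (the brackets between $V(\mathsf C)$ and $V(A)$, resp. $V(B)$, are exactly the prescribed ones, with coefficients still landing in $K(D_{0})$, resp. $K(D_{1})$), and — this is the point to check — $\langle A, B\rangle_{S}$ is the free amalgam $\tilde A \otimes_{\tilde M} \tilde B$, since a bracket expression purely in $V(A) \cup V(B)$ is constrained inside $S$ only by the Jacobi identity among such expressions. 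By Lemma~\ref{lem:freenessgoesdown} this matches $\langle A, B\rangle_{\mathbb{M}}$, which is the free amalgam by the hypothesis $A \ind_{M} B$, so $S$ embeds into $\mathbb{M}$ over $\langle A, B\rangle$ by quantifier elimination. Taking $C_{*}$ to be the image of the unextended $\mathsf C$ (with field $K_{*}$, not $\hat K$), we obtain $\LL_{K,V,c}$-isomorphisms $\langle C_{*},A\rangle \cong D_{0}$ over $A$ and $\langle C_{*},B\rangle \cong D_{1}$ over $B$; combining these with $K_{*} \equiv_{K(A)K(M)} K(C_{0})$ and $K_{*} \equiv_{K(B)K(M)} K(C_{1})$ (to get the required $T^{\dagger}$-type equalities $K(\langle C_{*},A\rangle) \equiv_{K(A)} K(\langle C_{0},A\rangle)$ and $K(\langle C_{*},B\rangle) \equiv_{K(B)} K(\langle C_{1},B\rangle)$), Corollary~\ref{cor:typedecomposition} gives $C_{*} \equiv_{A} C_{0}$ and $C_{*} \equiv_{B} C_{1}$.

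The main obstacle is the amalgamation step. Because $\mathrm{tp}(C_{0}/A)$ and $\mathrm{tp}(C_{1}/B)$ place $\mathsf C$ only in linearly independent position over $A$ and over $B$ — not in free amalgam position — one must be careful: the free amalgam $D_{0}\otimes_{\mathsf C} D_{1}$ does not freely amalgamate $A$ with $\mathsf C$ (it leaves the non-free internal structure of $D_{0}$ and $D_{1}$ intact, which is exactly what we want), and the crux is to verify that it nonetheless leaves $A$ and $B$ freely amalgamated over $M$, so that the amalgam is compatible with the ambient $\mathbb{M}$ and with the hypothesis $A \ind_{M} B$. If this fails, the alternative is to build the bracket of $V(\mathsf C)$ against $\langle V(A),V(B)\rangle$ directly — the values on iterated brackets of $A$- and $B$-elements being forced by Jacobi and defined by induction on complexity — and to check by hand that alternativity, the Jacobi identity, and the Lazard constraints survive; this verification is the technical heart of the proposition, together with the bookkeeping needed to see that no field elements outside the expected compositum are generated.
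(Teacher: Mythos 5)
Your opening moves are sound and essentially the paper's: Lemma \ref{lem:NSOP1technical} supplies $K_{*}$ realizing $\mathrm{tp}_{\LL^\dagger}(K(C_0)/K(A))\cup\mathrm{tp}_{\LL^\dagger}(K(C_1)/K(B))$ with $K(A)\ind^{K}_{K_{*}}K(B)$, and the renormalization producing $\hat C_0\equiv_A C_0$, $\hat C_1\equiv_B C_1$ with field $K_{*}$ does go through via Lemma \ref{extension of scalars}, $(\star)$ and Lemma \ref{lem:better base}. Where you diverge is the finish: the paper realizes the whole configuration \emph{inside} $\mathbb{M}$ (copies $A',B'$ of $A,B$ attached to one $C_{*}$ with $A'\ind_{C_{*}}B'$, obtained from Lemma \ref{lem:strongextension} after first arranging $K_0\ind^{K}_{K_{*}}K_1$ via stable embeddedness), then applies weak transitivity (Lemma \ref{lm:WT}) to get $A'\ind_M B'$ and transports $C_{*}$ back to $A,B$ by the universal property of the free amalgam; you instead form the amalgam $S=\tilde D_0\otimes_{\tilde{\mathsf C}}\tilde D_1$ abstractly and try to embed it over $\langle A,B\rangle$.

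The genuine gap is exactly the step you flag as ``the point to check'': that inside $S$ the subalgebra generated by $A\cup B$ is the free amalgam of (the scalar extensions of) $A$ and $B$ over $M$. This is not a formal consequence of free-amalgam generalities \textemdash{} base monotonicity of $\ind^{\otimes}$ goes in the opposite direction \textemdash{} and it is false without the weak-independence hypotheses: the linear independence of $V(\mathsf C)$ from $V(A)$, resp.\ $V(B)$, over $V(M)$ is what prevents brackets of $A$- and $B$-elements from acquiring relations through $\mathsf C$ and controls the field elements produced by the coordinate functions. In the paper this is precisely the content of Lemma \ref{lm:WT}, whose proof is the adaptation of the ideal-generation analysis of \cite[Lemma 5.11, Corollary 5.12, Theorem 5.13]{d2023model}; your heuristic (``constrained only by the Jacobi identity'') is not a proof, and your fallback of building the bracket by hand is exactly the technical work that lemma encapsulates. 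So as written the proposal is incomplete at its central step. To repair it you must either redo that algebra for your abstract amalgam, or follow the paper: realize the configuration in $\mathbb{M}$ by producing $A',B'$ over $C_{*}$ with $A'\ind_{C_{*}}B'$ (this is where Lemma \ref{lem:strongextension} is invoked, and it requires the additional arrangement that the fields $K(\langle A',C_{*}\rangle)$ and $K(\langle B',C_{*}\rangle)$ be Kim-independent over $K_{*}$, a step your outline drops), then quote Lemma \ref{lm:WT} and transport back via the universal property.
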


\begin{proof}
    First, we use NSOP$_{1}$ in the field sort to find some $K_{*}$ such that 
    $$
    K_{*} \models \mathrm{tp}_{L^{\dagger}}(K(C_{0})/K(A)) \cup \mathrm{tp}_{L^{\dagger}}(K(C_{1})/K(B)).
    $$
    By Lemma \ref{lem:NSOP1technical}, we may choose $K_{*}$ so that $K(A) \ind^{K}_{K_{*}} K(B)$. 
 By quantifier elimination relative to the field sort (see Corollary \ref{cor:typedecomposition} or $(\star)$ above the latter), choose some $C_{*} \equiv_{M} C_{0} \equiv_{M} C_{1}$ so that $K(C_{*}) = K_{*}$. Notice that we have $C_{*} \equiv_{MK(A)} C_{0}$ and $C_{*} \equiv_{MK(B)} C_{1}$, by Lemma \ref{lem:better base}. 
    
    Choose some $\sigma_{0} \in \mathrm{Aut}(\mathbb{M}/MK(A))$ with $\sigma_{0}(C_{0}) = C_{*}$ and set $K_{0} = \sigma_{0}(K(\langle A,C_{0} \rangle))$.  Likewise, choose some $\sigma_{1} \in \mathrm{Aut}(\mathbb{M}/MK(B))$ with $\sigma_{1}(C_{1}) = C_{*}$ and $K_{1} = \sigma_{1}(K(\langle B,C_{1} \rangle))$.  Recall that we have already arranged that $K(A) \ind^{K}_{K_{*}} K(B)$.  By extension and symmetry, we may choose $K_{0}' \equiv_{K(A)K_{*}} K_{0}$ and $K'_{1} \equiv_{K(B)K_{*}} K_{1}$ such that $K'_{0} \ind^{K}_{K_{*}} K'_{1}$.  Necessarily, we have also, then, $K_{0}' \equiv_{K(A)C_{*}} K_{0}$ and $K'_{1} \equiv_{K(B)C_{*}} K_{1}$ since the field sort is stably embedded. So without loss of generality, $K_{0} = K_{0}'$ and $K_{1} = K_{1}'$.  

    Notice that, since we have $K(\langle A,C_{0} \rangle)C_{0} \equiv_{MK(A)} K_{0}C_{*}$ and $K(\langle B,C_{1} \rangle)C_{1} \equiv_{MK(B)} K_{1}C_{*}$, we can find $A'$ and $B'$ such that 
    $$
    K(\langle A,C_{0} \rangle) A C_{0} \equiv_{MK(A)} K_{0}A'C_{*}
    $$
    and
    $$
    K(\langle B,C_{1} \rangle) BC_{1} \equiv_{MK(B)} K_{1}B'C_{*}. 
    $$
    By construction, $K(\langle A',C_{*} \rangle) = K_{0}$ and $K(\langle B',C_{*} \rangle) = K_{1}$.  Thus, applying Lemma \ref{lem:strongextension}, we may assume $A' \ind_{C*} B'$. 

   Note that we have $\tilde{A}' \ind^{\mathrm{w}}_{\tilde{M}} \tilde{C}_{*}$ and $\tilde{B}' \ind^{\mathrm{w}}_{\tilde{M}} \tilde{C}_{*}$, where the tilde indicates the extension of scalars up to $\tilde{K} = K_{0}K_{1}$. This follows from the fact that linear independence is preserved under extension of scalars. Hence, by Lemma \ref{lm:WT}, we have $\tilde{A}' \ind^{\otimes}_{\tilde{M}} \tilde{B}'$ again as LLAs over $\tilde{K}$.  From this it follows, by Lemma \ref{lem:freenessgoesdown}, that $\langle A',B' \rangle$ is the free amalgam of $A''$ and $B''$ over $M''$, where $A''$, $B''$, and $M''$ are LLAs over $K(\langle A',B' \rangle) = K(A')K(B') = K(A)K(B)$ obtained from $A'$, $B'$, and $M$ by extension of scalars.  
   
   But, as $A \equiv_{M} A'$ and $K(A) = K(A')$, we have, by Lemma \ref{lem:better base} that $A \equiv_{MK(A)} A'$ and therefore there is an isomorphism $f$ over $MK(A)$ from $A$ to $A'$.  By extending scalars to $K(A)K(B)$, $f$ lifts uniquely to an $K(A)K(B)$-isomorphism $f'$ from $A''$ to the extension of scalars of $A''$ to an LLA over $K(A)K(B)$.  We may regard $f''$ as an embedding of LLAs over $\langle M,K(A),K(B)\rangle$ from $A''$ to $\langle A',B' \rangle$, with $f(A) = f(A')$. Arguing similarly, we can find some $K(A)K(B)$-embedding $g: B'' \to \langle A',B'\rangle$ of LLAs over $\langle M,K(A),K(B)\rangle$ with $g(B) = B'$. By the universal property for the free amalgam, there is a unique $K(A)K(B)$-embedding of LLAs $h : \langle A,B \rangle \to \langle A',B' \rangle$ over $M$ extending $f$ and $g$. By a symmetric argument applied to $f^{-1}$ and $g^{-1}$, we see that $h$ must be an isomorphism.  In other words, we have shown there is some $\sigma \in \mathrm{Aut}(\mathbb{M}/MK(A)K(B))$ with $\sigma(A'B') = AB$.  Let $C_{**} = \sigma(C_{*})$.     

   Note that we have $A'C_{*} \equiv_{M} AC_{**}$ and $B'C_{*} \equiv_{M} BC_{*}$, hence 
   $$
   C_{**} \models \mathrm{tp}(C_{0}/A) \cup \mathrm{tp}(C_{1}/B)
   $$
   as desired. 
\end{proof}

\begin{thm}
    The theory $T^{+}$ is NSOP$_{4}$.
\end{thm}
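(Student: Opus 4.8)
The plan is to argue by contradiction: suppose $T^{+}$ has SOP$_{4}$, witnessed by a type $p(x,y)$ and an indiscernible sequence $(a_{i})_{i<\omega}$ with $(a_{i},a_{j})\models p$ iff $i<j$ and with $p(x_{0},x_{1})\cup p(x_{1},x_{2})\cup p(x_{2},x_{3})\cup p(x_{3},x_{0})$ inconsistent; I would derive a contradiction by exhibiting $b_{0},b_{1},b_{2},b_{3}$ realizing this union, i.e.\ by showing the putative $4$-cycle is consistent. First I would pass to a better witness: by compactness re-index the sequence by $\mathbb{Z}$, and by Ramsey assume it is indiscernible over a small model of $T^{+}$ (the witnessing property is preserved, depending only on the Ehrenfeucht--Mostowski type); then by Fact \ref{fact:indiscernibleisheirovermodel} obtain a model $M$ over which $(a_{i})_{i\in\mathbb{Z}}$ is indiscernible and $\ind^{h}$-independent, so $a_{<i}\ind^{u}_{M}a_{i}$ for all $i$. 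I would check that this makes the sequence a genuine $\ind$-Morley sequence over $M$: coheir independence over a model makes the field sorts Kim-independent, makes the underlying vector spaces linearly independent (in the stable reduct $\LL_{K,V,c}^{-}$), and --- by the two-sorted form of \cite[Theorem 5.13]{d2023model}, already used in the proof of Lemma \ref{lm:WT} --- makes the vector spaces freely amalgamated after extension of scalars, which are exactly the three clauses defining $\ind$. Hence $\acl(Ma_{i})\ind_{M}\acl(Ma_{j})$, a fortiori $\acl(Ma_{i})\ind^{w}_{M}\acl(Ma_{j})$, for all $i\neq j$, and $K(M)\models\mathrm{Th}(K(\mathbb{M}))$ since $M$ is a model.

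The heart of the argument is to close a length-$3$ $p$-path into a length-$4$ $p$-cycle with a single application of the weak independence theorem. Set $b_{0}:=a_{0}$, $b_{1}:=a_{1}$, $b_{3}:=a_{-1}$, so that $(b_{0},b_{1})=(a_{0},a_{1})\models p$ and $(b_{3},b_{0})=(a_{-1},a_{0})\models p$; it remains to produce $b_{2}$ with $(a_{1},b_{2})\models p$ and $(b_{2},a_{-1})\models p$. Put $A:=\acl(Ma_{1})$ and $B:=\acl(Ma_{-1})$, so $A\ind_{M}B$ by the Morley property, and put $C_{0}:=\acl(Ma_{2})$ and $C_{1}:=\acl(Ma_{-2})$, enumerated compatibly via an automorphism over $M$ sending $a_{2}$ to $a_{-2}$, so that $C_{0}\equiv_{M}C_{1}$ with these distinguished coordinates matched. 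The Morley property gives $C_{0}\ind^{w}_{M}A$ and $C_{1}\ind^{w}_{M}B$, and by Lemma \ref{lm:algclos} each $K(C_{i})$ is $\acl_{L^{\dagger}}$-closed, hence a model of $\mathrm{Th}(K(\mathbb{M}))$ (immediate when $K(\mathbb{M})\models\mathrm{ACF}$; in the strictly NSOP$_{1}$ case by closure properties of the relevant class of fields, or else after replacing $C_{i}$ by a suitable elementary substructure containing $Ma_{i}$). Proposition \ref{prop:weakIT} then provides $C_{*}$ with $C_{*}\equiv_{A}C_{0}$ and $C_{*}\equiv_{B}C_{1}$; letting $b_{2}$ be the element of $C_{*}$ in the distinguished coordinate, $\tp(b_{2}/A)=\tp(a_{2}/A)$ together with $a_{1}\in A$ yields $(a_{1},b_{2})\models p$, and $\tp(b_{2}/B)=\tp(a_{-2}/B)$ together with $a_{-1}\in B$ yields $(b_{2},a_{-1})\models p$. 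Thus $b_{0},b_{1},b_{2},b_{3}$ realize $p(x_{0},x_{1})\cup p(x_{1},x_{2})\cup p(x_{2},x_{3})\cup p(x_{3},x_{0})$, contradicting SOP$_{4}$; so $T^{+}$ is NSOP$_{4}$.

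The main obstacle will be the normalization: extracting from an arbitrary SOP$_{4}$ witness a single $M$-indiscernible sequence that is a \emph{full} $\ind$-Morley sequence over a model, so that $A\ind_{M}B$ (and not merely $\ind^{w}_{M}$ or algebraic independence) holds on the pair fed into Proposition \ref{prop:weakIT}. This is where the interaction between the field and vector-space sorts is most visible, and where the two-sorted refinement of the ``ideals descend under heir/coheir independence'' machinery of \cite{d2023model} is required, together with Lemma \ref{lem:better base}, Corollary \ref{cor:typedecomposition} and the relative $\aleph_{0}$-categoricity of $T^{+}$ for the bookkeeping. A secondary obstacle is verifying the field-model hypothesis on $C_{0},C_{1}$ in Proposition \ref{prop:weakIT} when $\mathrm{Th}(K(\mathbb{M}))$ is strictly NSOP$_{1}$, handled as indicated above; the case $K(\mathbb{M})\models\mathrm{ACF}$ is alternatively covered by stationarity of $\ind$ (Lemma \ref{existence}) and the classical stationary-independence argument for NSOP$_{4}$.
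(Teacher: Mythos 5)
There is a genuine gap at the crucial step where you feed $A=\acl(Ma_{1})$ and $B=\acl(Ma_{-1})$ into Proposition \ref{prop:weakIT}: you assert that the $\indi h$-independent (coheir) sequence obtained from Fact \ref{fact:indiscernibleisheirovermodel} is automatically a \emph{full} $\ind$-Morley sequence over $M$, so that $A\ind_{M}B$ holds. Coheir independence over a model gives (at best) weak independence --- Kim-independence of the field parts and linear independence of the vector-space parts --- but it does not give clause (2) ($K(\langle A,B\rangle)=K(A)K(B)$) nor clause (3) (free amalgamation of the extensions of scalars). The type $p$ is an arbitrary SOP$_{4}$-witness and may itself impose bracket relations between $a_{0}$ and $a_{1}$ (and new scalars via the coordinate functions) that are incompatible with freeness; finite satisfiability in $M$ places no constraint ruling this out. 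The machinery you invoke to bridge this --- the two-sorted adaptation of \cite[Lemma 5.11, Corollary 5.12, Theorem 5.13]{d2023model} as used in Lemma \ref{lm:WT} --- does not produce free amalgamation from heir/coheir independence alone: weak transitivity has the hypothesis $a\ind_{Md}b$, i.e.\ full free independence over the \emph{larger} base, as an input, which is exactly what you do not have. So the hypothesis $A\ind_{M}B$ of Proposition \ref{prop:weakIT} is unverified and the single application of the weak independence theorem does not go through.

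The paper's proof avoids this by never claiming full independence between members of the original sequence. It works (after Skolemizing, as in \cite[Lemma 5.9]{d2023model}) with a coheir sequence of \emph{models} $A_{i}\supseteq M$, uses full existence (Lemma \ref{existence}) to replace $A_{2}$ by $A_{2}'\equiv_{A_{1}}A_{2}$ with $A_{2}'\ind_{A_{1}}A_{0}$, and only then applies weak transitivity (Lemma \ref{lm:WT}), whose hypotheses ($A_{2}'\ind_{A_{1}}A_{0}$, plus heir/coheir data over $M$) are now genuinely available, to conclude $A_{2}'\ind_{M}A_{0}$; this freshly manufactured pair, not a pair from the sequence, is what goes into Proposition \ref{prop:weakIT}, and the edges of the resulting $4$-cycle are $(A_{0},A_{1})$, $(A_{1},A_{2}')$, $(A_{2}',A_{*})$, $(A_{*},A_{0})$. (Working with models also settles your secondary worry, since $K(A_{i})\models T^{\dagger}$ makes the field-model hypothesis of Proposition \ref{prop:weakIT} automatic, whereas your suggestion to enlarge $\acl(Ma_{\pm 2})$ to elementary substructures would force you to re-verify the weak-independence hypotheses for the enlarged sets.) To repair your argument you would need to insert exactly this full-existence-plus-weak-transitivity step (or prove outright that coheir implies $\ind$ over models, which the paper does not claim and which should fail); as written, the proof does not establish the theorem.
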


\begin{proof}
    By Fact \ref{fact:indiscernibleisheirovermodel}, it suffices to show that if $M \models T^{+}$ and $(A_{i})_{i < \omega}$ is a coheir sequence over $M$ consisting of models of $T$ that contain $M$ then, setting $p(X,Y) = \mathrm{tp}(A_{0},A_{1}/M)$, we have 
    $$
    p(X_{0},X_{1}) \cup p(X_{1},X_{2}) \cup p(X_{2},X_{3}) \cup p(X_{3},X_{0})
    $$
    is consistent.  

    Choose $A_{2}' \equiv_{A_{1}} A_{2}$ with $A_{2}' \ind_{A_{1}} A_{0}$.  By weak transitivity, we have $A_{2}' \ind_{M} A_{0}$.  

    Pick $A'_{1}$ such that 
    $$
    A'_{1}A_{2}' \equiv_{M} A_{2}A_{1} \left( \equiv_{M} A'_{2}A_{1} \right).
    $$
    Note that we have $\models p(A_{2}',A'_{1})$ and $A'_{1} \ind^{w}_{M} A'_{2}$.  

    Likewise, choose $A''_{1}$ such that 
    $$
    A''_{1}A_{0} \equiv_{M} A_{0}A_{1}.
    $$
    Then again we have $\models p(A''_{1},A_{0})$ and $A''_{1} \ind^{w}_{M} A_{0}$.  Since $A'_{1} \equiv_{M} A''_{1}$, we may apply Proposition \ref{prop:weakIT} to obtain some $A_{*}$ such that 
    $$
    A_{*} \models \mathrm{tp}(A'_{1}/A'_{2}) \cup \mathrm{tp}(A''_{1}/A_{0}).
    $$
    Therefore we have 
    $$
    \models p(A_{0},A_{1}) \cup p(A_{1},A'_{2}) \cup p(A'_{2},A_{*}) \cup p(A_{*},A_{0}),
    $$
    completing the proof.
\end{proof}

\begin{quest}
    What is Conant-independence in $T$?  We conjecture that it is field independence plus vector space independence for the algebraic closures\textemdash that is, we conjecture Conant-independence coincides with $\ind^{w}$ defined above, over models.  
\end{quest}

As a concluding remark, we extract the following criterion for NSOP$_{4}$, essentially what was used in the above proof, since this may be of independent interest: 

\begin{theorem}\label{thm:criterionNSOP4}
Let $T$ be any theory and $\ind$ be an invariant relation satisfying the following properties:
\begin{itemize}
    \item (Symmetry) $a\ind_Cb$ if and only if $b\ind_C a$;
    \item (Full Existence) for all $a,b,C$ there exists $a'\equiv_C a$ with $a'\ind_C b$;
    \item (Weak transitivity over models) if $a\ind_{Md} b$, $ a\indi h _M d$ and $b\indi u _M d$ then $a\ind_M b$;
    \item (Weak independence theorem over models) if $c_1\equiv_M c_2$ and $c_1\indi h_M a$, $c_2\indi u_M b$ and $a\ind_M b$
      then there exists $c$ with $c\equiv_{Ma} c_1$, $c \equiv_{Mb} c_2$.
\end{itemize} 
Then $T$ is NSOP$_4$.
\end{theorem}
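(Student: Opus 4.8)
\emph{Proof sketch.} The plan is to mimic the proof that $T^{+}$ is NSOP$_{4}$ given above, with the four abstract hypotheses on $\ind$, $\indi h$, $\indi u$ in place of the concrete weak independence theorem used there. Suppose toward a contradiction that $T$ has SOP$_{4}$, witnessed by a type $p(x,y)$ and an indiscernible sequence $(a_{i})_{i<\omega}$ with $\models p(a_{0},a_{1})$. By Fact~\ref{fact:indiscernibleisheirovermodel} (applied, if needed, to a copy of $(a_{i})$ with the same Ehrenfeucht--Mostowski type), we may assume there is a model $M$ over which $(a_{i})$ is both indiscernible and $\indi h$-Morley; write $A_{i}=a_{i}$ and $q(X,Y)=\tp(A_{0}A_{1}/M)$. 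Since $q\vdash p$, it suffices to prove that $q(X_{0},X_{1})\cup q(X_{1},X_{2})\cup q(X_{2},X_{3})\cup q(X_{3},X_{0})$ is consistent, as a realization would contradict the inconsistency of the corresponding $p$-cycle. From $\indi h$-Morleyness I would record that $\tp(A_{0}/MA_{1})$ and $\tp(A_{0}A_{1}/MA_{2})$ are finitely satisfiable in $M$, i.e. $A_{0}\indi u_{M}A_{1}$ and $A_{1}\indi u_{M}A_{2}$; the first of these is the key point.

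Next I would build the four vertices. Set $X_{0}=A_{0}$, $X_{1}=A_{1}$. By Full Existence pick $A_{2}'\equiv_{MA_{1}}A_{2}$ with $A_{2}'\ind_{MA_{1}}A_{0}$ and put $X_{2}=A_{2}'$; by $M$-indiscernibility $\models q(A_{1},A_{2}')$. Since $A_{2}'\equiv_{MA_{1}}A_{2}$ we have $\tp(A_{1}/MA_{2}')=\tp(A_{1}/MA_{2})$, finitely satisfiable in $M$, so $A_{2}'\indi h_{M}A_{1}$; together with $A_{0}\indi u_{M}A_{1}$ and $A_{2}'\ind_{MA_{1}}A_{0}$, Weak Transitivity over models yields $A_{2}'\ind_{M}A_{0}$. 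Since $A_{2}'\equiv_{M}A_{2}\equiv_{M}A_{0}$, choose $c_{1}$ with $\models q(A_{2}',c_{1})$; as $(A_{2}',c_{1})\equiv_{M}(A_{0},A_{1})$, the type $\tp(A_{2}'/Mc_{1})$ is an $M$-conjugate of $\tp(A_{0}/MA_{1})$, hence finitely satisfiable in $M$, i.e. $c_{1}\indi h_{M}A_{2}'$. Symmetrically, since $A_{0}\equiv_{M}A_{1}$, choose $c_{2}$ with $\models q(c_{2},A_{0})$; from $(c_{2},A_{0})\equiv_{M}(A_{0},A_{1})$ we get $\tp(c_{2}/MA_{0})$ finitely satisfiable in $M$, i.e. $c_{2}\indi u_{M}A_{0}$. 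Finally $c_{1}\equiv_{M}c_{2}$, both realizing $\tp(A_{1}/M)$.

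Now apply the Weak Independence Theorem over models with $a=A_{2}'$, $b=A_{0}$, the tuples $c_{1},c_{2}$ just constructed, and the relation $A_{2}'\ind_{M}A_{0}$ established above; this produces $c$ with $c\equiv_{MA_{2}'}c_{1}$ and $c\equiv_{MA_{0}}c_{2}$. Setting $X_{3}=c$, the first congruence gives $\models q(A_{2}',c)$ and the second gives $\models q(c,A_{0})$, so $(A_{0},A_{1},A_{2}',c)$ realizes the $q$-cycle, completing the argument. Symmetry of $\ind$ is used freely to interchange the two sides, and invariance to guarantee that all of the $\ind$-, $\indi h$- and $\indi u$-statements above depend only on the relevant types.

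The delicate step, which I would think about most carefully, is the orientation bookkeeping with the half-bounded relations $\indi h$ and $\indi u$: one must choose the directions of the four edges of the cycle and the types of $A_{2}',c_{1},c_{2}$ so that precisely the ``finitely satisfiable in $M$'' facts required by Weak Transitivity and the Weak Independence Theorem hold. This succeeds exactly because $(A_{i})$ is $\indi h$-Morley over $M$, which is what makes $\tp(A_{0}/MA_{1})$ --- and hence its conjugates $\tp(A_{2}'/Mc_{1})$ and $\tp(c_{2}/MA_{0})$ --- finitely satisfiable in $M$; a coheir ($\indi u$-Morley) sequence would give the opposite orientations and not suffice (which is also why the concrete $T^{+}$ argument could get away with a coheir sequence: its relation $\ind^{w}$ was symmetric over models). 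The reduction in the first paragraph also tacitly uses the standard fact underlying Fact~\ref{fact:indiscernibleisheirovermodel}, that an indiscernible sequence may be taken indiscernible and $\indi h$-Morley over a suitable model.
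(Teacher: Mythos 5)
Your proposal is correct and follows essentially the same route as the paper's proof: pass to a model $M$ over which the sequence is indiscernible and $\indi h$-Morley, use Full Existence to replace $a_2$ by $A_2'\equiv_{Ma_1}a_2$ free from $a_0$ over $Ma_1$, apply Weak Transitivity to get $A_2'\ind_M a_0$, manufacture the two conjugate copies $c_1,c_2$ of $a_1$ with the correct $\indi h$/$\indi u$ orientations by invariance, and close the $4$-cycle with the Weak Independence Theorem. The only differences are cosmetic (labels of $c_1,c_2$, working with $\tp(A_0A_1/M)$ rather than the type over $\emptyset$, and making explicit the $M$-indiscernibility that the paper uses implicitly).
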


\begin{proof}
     Let $(a_i)_{i<\omega}$ be an indiscernible sequence and for $p(x,y) = \tp(a_0,a_1)$, we need to prove that 
    \[p(x_0,x_1)\cup p(x_1,x_2)\cup p(x_2,x_3)\cup p(x_3,x_0)\]
    is consistent. By Fact \ref{fact:indiscernibleisheirovermodel}, there is a model $M$ such that $a_i\indi h_M a_{<i}$ for all $i$. By full existence, there exists $a_2^*\equiv_{Ma_1} a_2$ such that $a_2^*\ind_{Ma_1} a_0$. From $a_2\indi h _M a_1$ and $a_1\indi h_M a_0$ we get $a_2^*\indi h _M a_1$ by invariance and $a_0\indi u_M a_1$ by definition. Using weak transitivity, we conclude $a_2^*\ind_M a_0$. As $a_0\equiv_M a_1$ there exists $c_1$ such that $c_1a_0\equiv_M a_0a_1$ and by invariance, $c_1\indi u_M a_0$. Similarly, as $a_2^*\equiv_M a_1$ there exists $c_2$ such that $a_1a_2^*\equiv_M a_2^*c_2$ and by invariance $c_2\indi h_M a_2^*$. We have $c_1\equiv_M c_2$, $c_1\indi u _M a_0$, $c_2\indi h _M a_2^*$ hence by the weak independence theorem over models, we conclude that there exists $a_3^*$ such that $a_3^*a_0\equiv_M c_1a_0\equiv_M a_0a_1$ and $a_3^*a_2^*\equiv_M c_2a_2^*\equiv_M a_2^*a_1\equiv_M a_2a_1$. As $a_0a_1\equiv a_1a_2$, we conclude that 
    \[a_0a_1\equiv a_1a_2^*\equiv a_2^* a_3^*\equiv a_3^*a_0\]
    hence the type above is consistent.
\end{proof}

\begin{quest}
It would be interesting to prove that $T^{+}$ is NSOP$_{4}$ in cases where the theory of the field is \emph{not} assumed to be NSOP$_{1}$. Some precise variants of this question: 
    \begin{enumerate}
        \item Suppose $T^{\dagger}$ is NSOP$_{4}$.  Does it follow that $T^{+}$ is NSOP$_{4}$?
        \item Suppose $T^{\dagger}$ has symmetric Conant-independence. Does it follow that $T^{+}$ is NSOP$_{4}$?
        \item Suppose $T^{\dagger}$ is a theory of curve-excluding fields.  Does it follow that $T^{+}$ is NSOP$_{4}$?
    \end{enumerate}
\end{quest}

\subsection*{Acknowledgements}

We are grateful to the anonymous referee for numerous helpful comments and suggestions. Much of this work was completed at the Institut Henri Poincar\'e when the authors were hosted as part of a `Research in Paris' program. The authors would also like to thank the Institut Henri Poincar\'e for their hospitality.

\subsection*{Funding}

D'Elb\'ee was partially supported by the UKRI Horizon Europe Guarantee Scheme, grant no EP/Y027833/1, and by the Ramon y Cajal grant RYC2023-042677-I funded by MICIU/AEI/10.13039/501100011033 and by ESF+. M\"uller was supported by the Faculty Research Support Grant of AUC. Ramsey was supported by NSF grant DMS-2246992.

\bibliographystyle{plain}
\bibliography{biblio.bib}{}

\end{document}